\newcommand{\Z}{\mathbb{Z}}
\newcommand{\Q}{\mathbb{Q}}
\newcommand{\R}{\mathbb{R}}
\newcommand{\N}{\mathbb{N}}
\newcommand{\C}{\mathbb{C}}
\newcommand{\GL}{\operatorname{GL}}
\renewcommand{\epsilon}{\varepsilon}
\newcommand{\re}{\mathrm{Re}}
\newcommand{\im}{\mathrm{Im}}
\DeclareFontFamily{U}{wncy}{}
\DeclareFontShape{U}{wncy}{m}{n}{<->wncyr10}{}
\DeclareSymbolFont{mcy}{U}{wncy}{m}{n}
\newtheorem{theorem}{Theorem}[section]
\newtheorem{lemma}[theorem]{Lemma}
\newtheorem{proposition}[theorem]{Proposition}
\newtheorem{corollary}[theorem]{Corollary}
\theoremstyle{definition}
\newtheorem*{theorem*}{Theorem}
\theoremstyle{remark}
\newtheorem*{remark}{Remark}
\numberwithin{equation}{section}
\renewcommand{\tilde}{\widetilde}
\renewcommand{\epsilon}{\varepsilon}
\newcommand{\T}{\mathbb{T}}
\renewcommand{\C}{\mathbb{C}}
\renewcommand{\Re}{\operatorname{Re}}
\newcommand{\pfrac}[2]{\Big(\frac{#1}{#2}\Big)}
\theoremstyle{definition}
\begin{document}

\title[Distribution of imaginary parts of $L$-functions]{A zero density estimate and fractional imaginary parts of zeros for $\GL_2$ $L$-functions}

\address{Department of Mathematics, University of Illinois at Urbana-Champaign,
1409 West Green Street, Urbana, IL 61801, USA}

\author{Olivia Beckwith}
\email{obeck@illinois.edu}

\author{Di Liu}
\email{dil4@illinois.edu}

\author{Jesse Thorner}
\email{jesse.thorner@gmail.com}

\author{Alexandru Zaharescu}
\address{Department of Mathematics, University of Illinois at Urbana-Champaign,
1409 West Green Street, Urbana, IL 61801, USA, and Simon Stoilow Institute of Mathematics of the Romanian Academy,
P.O. Box 1-764, RO-014700 Bucharest, Romania}
\email{zaharesc@illinois.edu}

%    General info
\subjclass[2020]{11F41, 11M41}

\keywords{$L$-functions; zero density estimate; Sato-Tate conjecture; fractional parts}

\date{}

\begin{abstract}
    We prove an analogue of Selberg's zero density estimate for $\zeta(s)$ that holds for any $\GL_2$ $L$-function.  We use this estimate to study the distribution of the vector of fractional parts of $\gamma\bm{\alpha}$, where $\bm{\alpha}\in\R^n$ is fixed and $\gamma$ varies over the imaginary parts of the nontrivial zeros of a $\GL_2$ $L$-function.
\end{abstract}

\maketitle

\section{Introduction and statement of results}

\subsection{Main results}

Let $\zeta(s)$ be the Riemann zeta function, and let $N(\sigma,T)$ denote the number of zeros
$\beta+i\gamma$ of $\zeta(s)$ with $\beta\geq\sigma\geq 0$ and $|\gamma|\leq T$. The asymptotic
\begin{equation}
    \label{eqn:riemann_von_mangoldt}
    N(T):= N(0,T)\sim (1/\pi)T\log T
\end{equation}
follows from the argument principle. The Riemann hypothesis asserts that $\zeta(s)\neq 0$ for $\re(s)>\frac{1}{2}$, so $N(\sigma,T)=0$ for $\sigma>\frac{1}{2}$. Selberg \cite[Thm 9.19C]{titchmarsh2} proved a delicate zero density estimate  that recovers the upper bound in  \eqref{eqn:riemann_von_mangoldt} at $\sigma=\frac{1}{2}$, namely
\begin{equation}
\label{eqn:selberg_zde}
    N(\sigma,T)\ll T^{1-\frac{1}{4}(\sigma-\frac{1}{2})}\log T,\qquad \sigma\geq \tfrac{1}{2}.
\end{equation}
(See Baluyot \cite[Theorem 1.2.1]{Baluyot} for an improvement.)  Selberg's estimate implies that
\[
\frac{1}{N(T)}\#\Big\{\rho=\beta+i\gamma\colon \zeta(\rho)=0,~|\gamma|\leq T,~\beta\in\Big[\frac{1}{2}-\frac{4\log\log T}{\log T},\frac{1}{2} +\frac{4\log\log T}{\log T}\Big]\Big\}\ll \frac{1}{\log T}.
\]
%Baluyot improved the constant $\frac{1}{4}$ in \eqref{eqn:selberg_zde} to $\frac{8}{7}$ \cite[Theorem 1.2.1]{Baluyot}.
As an application of \eqref{eqn:selberg_zde}, Selberg proved a central limit theorem for $\log|\zeta(\frac{1}{2}+it)|$:
\begin{equation}
\label{eqn:Selberg_CLT}
\lim_{T\to\infty}\frac{1}{T}\textup{meas}\Big\{T\leq t\leq 2T\colon \log|\zeta(\tfrac{1}{2}+it)|
    \geq V\sqrt{\tfrac{1}{2}\log\log T}\Big\}\sim\frac{1}{\sqrt{2\pi}}\int_V^{\infty}e^{-u^2/2}du.
\end{equation}

Let $\mathscr{A}$ be the set of cuspidal automorphic representations of $\GL_2$ over $\Q$ with unitary central character.  For $\pi\in\mathscr{A}$, let $L(s,\pi)$ be its standard $L$-function.  Define
\[
    N_{\pi}(\sigma,T):=\#\{\rho=\beta+i\gamma\colon \beta\geq\sigma,~|\gamma|\leq T,~L(\rho,\pi)=0\}.
\]
As with $\zeta(s)$, the argument principle can be used to prove that
\begin{equation}
    \label{eq:number_of_zeros}
    N_{\pi}(T):= N_{\pi}(0,T)\sim (2/\pi)T\log T.
\end{equation}
The generalized Riemann hypothesis (GRH) asserts that $L(s, \pi) \neq 0$ for $\re(s) > \frac{1}{2}$.
When $\pi \in \mathscr{A}$ corresponds with a holomorphic cuspidal newform of even weight $k \geq 2$, Luo \cite[Theorem 1.1]{Luo} and Li \cite[Section 7]{FZ2} proved that $N_{\pi}(\sigma, T) \ll T^{1-\frac{1}{72}(\sigma-\frac{1}{2})}\log T$.  We prove:

\begin{theorem}\label{thm:ZDE}
    Let $\theta\in[0,\frac{7}{64}]$ be an admissible exponent toward the generalized Ramanujan conjecture for Hecke--Maa\ss{} newforms (see \eqref{eqn:GRC}), and fix $0<c<\frac{1}{4}-\frac{\theta}{2}$. If $\pi \in \mathscr{A}$, $\sigma \geq \frac{1}{2}$, and $T \geq 2$, then $N_{\pi}(\sigma, T) \ll T^{1-c(\sigma-\frac{1}{2})}\log T$.  The implied constant depends at most on $\pi$.
\end{theorem}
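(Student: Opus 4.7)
The plan is to transplant Selberg's proof of \eqref{eqn:selberg_zde} to the $\GL_2$ setting, with the Kim--Sarnak exponent $\theta$ entering only through pointwise bounds on the Dirichlet coefficients of $L(s,\pi)^{\pm 1}$. Writing $1/L(s,\pi)=\sum_n\mu_\pi(n)n^{-s}$, the bound \eqref{eqn:GRC} together with multiplicativity gives $|\mu_\pi(n)|,|a_\pi(n)|\ll_{\pi,\epsilon}d(n)^{O(1)}n^{\theta+\epsilon}$. Given $c<\tfrac14-\tfrac\theta2$, I would choose $\eta\in(2c,\tfrac12-\theta)$, set $X=T^\eta$, and form the smooth mollifier
\[
M_X(s,\pi)=\sum_{n\leq X}\frac{\mu_\pi(n)}{n^s}\Big(1-\frac{\log n}{\log X}\Big),
\]
so that $L(s,\pi)M_X(s,\pi)-1$ has a Dirichlet expansion supported on $n>X$ with coefficients bounded by $d(n)^{O(1)}n^\theta$.

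For $\sigma>\tfrac12$, set $\delta=\tfrac12(\sigma-\tfrac12)$ and apply Littlewood's lemma to $f(s):=L(s,\pi)M_X(s,\pi)$ on the rectangle $[\tfrac12+\delta,2]\times[-T,T]$. Each zero of $L(s,\pi)$ with $\Re\rho\geq\sigma$ and $|\Im\rho|\leq T$ contributes at least $\tfrac12(\sigma-\tfrac12)$ to the resulting sum; the right vertical side is negligible because $|f(2+it)-1|\ll X^{-1}$, and the two horizontal sides contribute $O(\log T)$ via Stirling and the functional equation for $L(s,\pi)$. This yields
\[
(\sigma-\tfrac12)\,N_\pi(\sigma,T)\ll\int_{-T}^{T}\log^+|f(\tfrac12+\delta+it)|\,dt+\log T.
\]

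The crux of the argument is a moment-based estimate for $|f|$ on the line $\Re s=\tfrac12+\delta$. The approximate functional equation for $L(s,\pi)$, combined with the Montgomery--Vaughan mean-value theorem and the Rankin--Selberg bound $\sum_{n\leq Y}|a_\pi(n)|^2\ll_\pi Y$, yields a mean-value bound of the form $\int_{-T}^T|f(\tfrac12+\delta+it)|^{2}\,dt\ll T(\log T)^{O(1)}$ for $X\leq T^\eta$ with $\eta<\tfrac12-\theta$. Feeding this into a moment/Markov-type argument (equivalently, Jensen's inequality on short discs around each $t$), I would then bound
\[
\int_{-T}^{T}\log^+|f(\tfrac12+\delta+it)|\,dt\ll(\sigma-\tfrac12)\,T^{1-(\eta/2)(\sigma-\frac12)}\log T.
\]
Dividing by $\sigma-\tfrac12$ and letting $\eta\to\tfrac12-\theta$ recovers Theorem~\ref{thm:ZDE} for any $c<\tfrac14-\tfrac\theta2$.

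The main obstacle is the mean-value estimate at $X$ approaching $T^{1/2-\theta}$. The off-diagonal of $|f|^2$, written as a Dirichlet polynomial, picks up a factor determined by the sup-norms $|\mu_\pi(n)|\ll n^\theta$ of the mollifier coefficients, and it is precisely this factor that restricts the admissible mollifier length to $\eta<\tfrac12-\theta$. A secondary technical point is the exceptional $t$-set on which the approximate identity for $\log L(\tfrac12+it,\pi)$ breaks down, namely those $t$ at which $L(\tfrac12+it,\pi)$ is abnormally small; I would treat this by Selberg's bootstrap, establishing first a weaker preliminary form of the theorem to bound the measure of this set and using Jensen's inequality on short vertical discs in the critical strip to absorb its contribution. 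The $\pi$-dependence of the implied constant enters through the analytic conductor of $\pi$ and the residue of the Rankin--Selberg $L$-function $L(s,\pi\times\tilde\pi)$ at $s=1$.
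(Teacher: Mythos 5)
Your overall architecture (mollify, bound a second moment near the critical line, convert to a zero count via Littlewood/Jensen) matches the paper's, but the crux step is wrong. You claim that
\[
\int_{-T}^{T}\big|L(\tfrac12+\delta+it,\pi)M_X(\tfrac12+\delta+it,\pi)\big|^{2}\,dt\ll T(\log T)^{O(1)}
\]
follows from the approximate functional equation, the Montgomery--Vaughan mean-value theorem, and the Rankin--Selberg bound, for mollifier lengths $X=T^{\eta}$ with any $\eta<\tfrac12-\theta$. It does not. After the approximate functional equation, $LM$ is a Dirichlet polynomial of length roughly $\sqrt{q_\pi}\,T\cdot X$, and the Montgomery--Vaughan off-diagonal error $\sum_n n|b_n|^2$ is then of size $T^{1+\eta+o(1)}$, which swamps the target $T$ for every $\eta>0$. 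The genuine difficulty is the off-diagonal $am\neq bn$, which requires nontrivial cancellation in shifted convolution sums $\sum_{am-bn=h}\lambda_\pi(m)\overline{\lambda_\pi(n)}$; the paper imports this as Lemma~\ref{lem:off_diag} (that is, \cite[Proposition 3.4]{AT}), giving $N_{a,b}^{\pm}\ll (ab)^{1/2}T^{1/2+\theta}(abT)^{\epsilon}$. This spectral bound --- not the pointwise bound $|\mu_\pi(n)|\ll n^{\theta+\epsilon}$ on the mollifier coefficients, which the paper handles on average via \eqref{eqn:RS_ramanujan_average} --- is where $\theta$ actually enters, and it caps the admissible mollifier length at $T^{1/4-\theta/2}$ (the paper's remark states this is the current supremum), not $T^{1/2-\theta}$ as you assert.

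There is a compensating factor-of-two slip downstream: your conversion yields $c=\eta/2$, whereas the paper converts the second-moment bound into the zero count without halving the exponent, by applying Littlewood's lemma to $\Phi=1-(1-LM)^2$ (so that $\log|\Phi|\le|1-LM|^2$ pointwise) and by interpolating the $L^2$ bounds on the lines $\Re s=\tfrac12+\tfrac1{\log T}$ and $\Re s=1+A$ via Gabriel's convexity principle, which produces the decay $T^{-c(\sigma-\frac12)}$ with $c\to\varpi$ as $A\to\infty$. With the correct mollifier length $T^{1/4-\theta/2}$, your pipeline would only yield $c<\tfrac18-\tfrac{\theta}{4}$. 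The two errors cancel numerically, but neither step is correct as written, and the second-moment claim in particular is a real gap rather than a technicality.
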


\begin{remark}
    Under the generalized Ramanujan conjecture for all Hecke--Maa\ss{} newforms, we may take $\theta=0$.  In this case, our result is as strong as Selberg's zero density estimate \eqref{eqn:selberg_zde} for $\zeta(s)$.  Currently, the best unconditional bound is $\theta\leq \frac{7}{64}$, so we may choose any $c<\frac{25}{128}$.  This noticeably improves the work of Luo and Li, and it holds for any $\pi\in\mathscr{A}$.  The constant $\frac{1}{4}-\frac{\theta}{2}$ is, as of now, the supremum over all $\varpi$ for which we can unconditionally prove an asymptotic for the second mollified moment of $L(s,\pi)$ on $\re(s)=\frac{1}{2}+\frac{1}{\log T}$ with a mollifier of length $T^{\varpi}$.
\end{remark}

%\begin{remark}
%In an addendum to work of Ford and Zaharescu \cite[Section 7]{FZ2}, Li proved that Luo's zero density estimate extends to all holomorphic cuspidal newforms of even integral weight $k \geq 2$ and any level. Luo suggested that one could prove a zero density estimate similar to \cite[Theorem 1.1]{Luo} for Hecke--Maa\ss{} newforms, but it is unclear to the authors how one might extend the strategy of Luo and Li, especially for larger levels.  Our approach uses the recent work of Andersen and Thorner \cite{AT}, which employs the elegant spectral expansion for shifted convolution sums proved by Blomer and Harcos \cite{BH_Hilbert}.  This approach will always produce a numerically superior result compared to Luo's approach.
%\end{remark}

\begin{corollary}
    \label{cor:CLT}
    Let $\pi \in \mathscr{A}$.  If $V\in\R$, then as $T\to\infty$, we have
    \[
    \lim_{T\to\infty}\frac{1}{T}\textup{meas}\Big\{T\leq t\leq 2T\colon \log|L(\tfrac{1}{2}+it,\pi)|
        \geq V\sqrt{\tfrac{1}{2}\log\log T}\Big\}=\frac{1}{\sqrt{2\pi}}\int_V^{\infty}e^{-u^2/2}du,
    \]
    \[
    \lim_{T\to\infty}\frac{1}{T}\textup{meas}\Big\{T\leq t\leq 2T\colon \arg L(\tfrac{1}{2}+it,\pi)
        \geq V\sqrt{\tfrac{1}{2}\log\log T}\Big\}=\frac{1}{\sqrt{2\pi}}\int_V^{\infty}e^{-u^2/2}du.
    \]
\end{corollary}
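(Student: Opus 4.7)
The plan is to follow Selberg's original proof of the central limit theorem for $\log|\zeta(\tfrac12+it)|$, substituting Theorem~\ref{thm:ZDE} for the classical estimate \eqref{eqn:selberg_zde}. Writing $s=\tfrac12+it$, the goal is to approximate $\log|L(s,\pi)|=\Re\log L(s,\pi)$ and $\arg L(s,\pi)=\Im\log L(s,\pi)$, outside a subset of $[T,2T]$ of measure $o(T)$, by a short Dirichlet polynomial over primes whose moments match those of a Gaussian of variance $\tfrac12\log\log T$.

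The first step is a Selberg-type approximation: for a parameter $x=x(T)$ with $\log x$ of order $\log T/(\log\log T)^2$ and for $t$ outside an exceptional set,
\[
\log L(\tfrac12+it,\pi) = \sum_{n\le x}\frac{\Lambda_\pi(n)}{n^{1/2+it}\log n} + E_\pi(t;x),
\]
where $\Lambda_\pi$ is the von Mangoldt function attached to $L(s,\pi)$ and $E_\pi(t;x)$ decomposes into tame gamma-factor contributions plus a sum over nontrivial zeros of $L(s,\pi)$ near $s=\tfrac12+it$. Controlling the $L^2$-norm of $E_\pi(t;x)$ over $[T,2T]$ is the crux: splitting zeros according to their distance from the critical line and applying Theorem~\ref{thm:ZDE} in the form $N_\pi(\tfrac12+\delta,T)\ll T^{1-c\delta}\log T$ makes this integral $o(T\log\log T)$, so Chebyshev gives $|E_\pi(t;x)|=o(\sqrt{\log\log T})$ off a set of measure $o(T)$. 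Prime-power terms with $k\ge 2$ contribute $O(1)$ on average after invoking Rankin--Selberg bounds, leaving the leading behavior governed by
\[
P_\pi(t) := \sum_{p\le x}\frac{a_\pi(p)}{p^{1/2+it}}.
\]

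The variance comes from Rankin--Selberg: the prime number theorem for $L(s,\pi\times\tilde\pi)$ gives $\sum_{p\le x}|a_\pi(p)|^2/p = \log\log x + O(1)$, so an orthogonality computation (using $\int_T^{2T}(p/q)^{-it}\,dt\ll |\log(p/q)|^{-1}$ for $p\ne q$) yields
\[
\frac{1}{T}\int_T^{2T}|\Re P_\pi(t)|^2\,dt \sim \tfrac12\log\log T, \qquad \frac{1}{T}\int_T^{2T}|\Im P_\pi(t)|^2\,dt \sim \tfrac12\log\log T.
\]
Higher even moments are handled by the method of moments: expanding $(\Re P_\pi)^{2k}$ and $(\Im P_\pi)^{2k}$, the diagonal prime-tuples contribute $(2k-1)!!\,(\tfrac12\log\log T)^k$ while off-diagonal contributions are negligible provided $\log x = o(\log T)$. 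Combined with the approximation step, this identifies both $\Re P_\pi(t)$ and $\Im P_\pi(t)$ as asymptotically Gaussian of variance $\tfrac12\log\log T$, proving both limits of Corollary~\ref{cor:CLT} simultaneously. The main obstacle is the first step: without Theorem~\ref{thm:ZDE} one could not show that zeros off the critical line are sparse enough to render $E_\pi(t;x)$ negligible on $[T,2T]$ outside a set of measure $o(T)$, and this is precisely why the corollary is advertised as an application of the density estimate.
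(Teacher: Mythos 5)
Your outline is correct and is in substance the same argument the paper relies on: the paper's entire proof of this corollary is a citation of Bombieri--Hejhal's Theorem B, whose proof is exactly the Selberg-style scheme you describe (approximation of $\log L(\tfrac12+it,\pi)$ by a short Dirichlet polynomial over primes, with the error controlled in $L^2$ off an exceptional set via the zero density estimate of Theorem \ref{thm:ZDE}, the variance $\sim\tfrac12\log\log T$ coming from the prime number theorem for $L(s,\pi\times\tilde\pi)$, and Gaussianity via the method of moments). The only difference is that you reconstruct the internals of the cited theorem rather than verifying its hypotheses --- the averaged Ramanujan bound \eqref{eqn:RS_ramanujan_average}, the Rankin--Selberg normalization, and the density estimate --- and then invoking it as a black box, which is what the paper does.
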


\begin{proof}
    Bombieri and Hejhal \cite[Thm B]{BH} proved that this follows from a zero density estimate of the quality given by Theorem \ref{thm:ZDE}.
\end{proof}

\begin{remark}
    Radziwi{\l}{\l} and Soundararajan \cite{RS_Selberg} recently found a second proof of \eqref{eqn:Selberg_CLT}
    which avoids the use of zero density estimates. Their work was recently extended to holomorphic newforms by Das \cite{Das}.  The proof relies on both the generalized Ramanujan conjecture and the Sato--Tate conjecture,
    neither of which is known for any Hecke--Maa\ss{} newform.
\end{remark}

We study the distribution of imaginary parts of the nontrivial zeros of $L(s, \pi)$ using Theorem \ref{thm:ZDE}.
First, we give an incomplete history of such results for $\zeta(s)$. Hlawka \cite{Hlawka} proved
that if $\alpha\in\R$ is fixed and $h\colon \T\to\C$ is continuous, then
\begin{equation}
\label{eqn:Hlawka}
     \lim_{T\to\infty}\frac{1}{N(T)}\sum_{|\gamma|\leq T}h(\alpha\gamma)=\int_{\T}h(t)dt,
\end{equation}
where $\gamma$ varies over the imaginary parts of the nontrivial zeros of $\zeta(s)$ and $\T=\R/\Z$.  Thus the sequence of fractional parts $\{\gamma\alpha\}$ is equidistributed modulo 1.  However, given a rate of convergence, there exist continuous functions $h$ such that the limit in \eqref{eqn:Hlawka} cannot be attained with said rate (see also \cite[Thm 7]{FZ}).  Therefore, \eqref{eqn:Hlawka} is the best that one can say for arbitrary $h$.

Ford and Zaharescu \cite[Corollary 2]{FZ} established the existence of a second order term, proving that if $h:\T\to\mathbb{C}$ is twice continuously differentiable,\footnote{On RH, absolute continuity suffices.} then
\begin{equation}
    \label{eqn:FZ}
    \sum_{|\gamma|\leq T}h(\alpha\gamma)=N(T)\int_{\T}h(t)dt+T\int_{\T}h(t)g_{\alpha}(t)dt+o(T),
\end{equation}
where
\begin{equation}
\label{eqn:g_func_FZ}
g_{\alpha}(t) = \begin{dcases*}
\frac{\log p}{\pi}\re\sum_{k=1}^{\infty}\frac{e^{-2\pi i qkt}}{p^{ak/2}} & \parbox{.5\textwidth}{if there exists a prime $p$ and $a,q\in\Z$ such that $\gcd(a,q)=1$ and $\alpha=\frac{a}{q}\frac{\log p}{2\pi}$,}   \\
0 & otherwise.
\end{dcases*}
\end{equation}
Despite the limitations on the analytic properties of $h$, Ford and Zaharescu still conjectured \cite[Conjecture A]{FZ} that for any interval $\mathbb{I}\subseteq\T$ of length $|\mathbb{I}|$, we have
\begin{equation}
\label{eqn:conj1_zeta}
\sum_{\substack{|\gamma|\leq T \\ \{\alpha\gamma\}\in\mathbb{I}}}1 = |\mathbb{I}|N(T)+T\int_{\mathbb{I}}g_{\alpha}(t)dt+o(T),
\end{equation}
which implies that
\begin{equation}
\label{eqn:conj2_zeta}
D_{\alpha}(T):=\sup_{\mathbb{I}\subseteq \T}\Big|\frac{1}{N(T)}\sum_{\substack{|\gamma|\leq T \\ \{\alpha\gamma\}\in\mathbb{I}}}1 -|\mathbb{I}|\Big|=\frac{T}{N(T)}\sup_{\mathbb{I}\subseteq\T}\Big|\int_{\mathbb{I}}g_{\alpha}(t)dt\Big|+o\Big(\frac{1}{\log T}\Big).
\end{equation}

Ford, Soundararajan, and Zaharescu \cite{FSZ} made some progress toward the conjectured asymptotics \eqref{eqn:conj1_zeta} and \eqref{eqn:conj2_zeta}.  Unconditionally, they proved that
\begin{equation}
\label{eqn:thm_fsz_1.1}
D_{\alpha}(T)\geq \frac{T}{N(T)}\sup_{\mathbb{I}\subseteq\T}\Big|\int_{\mathbb{I}}g_{\alpha}(t)dt\Big|+o\Big(\frac{1}{\log T}\Big).
\end{equation}
Assuming RH, they proved that
\begin{equation}
\label{eqn:thm_fsz_1.2}
\Big|\sum_{\substack{|\gamma|\leq T \\ \{\alpha\gamma\}\in\mathbb{I}}}1 - |\mathbb{I}|N(T)-T\int_{\mathbb{I}}g_{\alpha}(t)dt\Big|\leq \Big(\frac{\alpha}{2}+o(1)\Big)T.
\end{equation}
Along with making some appealing connections between the conjectured asymptotics \eqref{eqn:conj1_zeta} and \eqref{eqn:conj2_zeta} and other intriguing open problems like pair correlation of zeros of $\zeta(s)$ and the distribution of primes in short intervals, they proved analogues for other $L$-functions of \eqref{eqn:thm_fsz_1.1} (assuming a zero density estimate of the form \eqref{eqn:selberg_zde}) and \eqref{eqn:thm_fsz_1.2} (assuming GRH).

In this paper, we extend the work in \cite{FZ,FSZ,FMZ,LZ} to $L(s,\pi)$ for any $\pi\in\mathscr{A}$ using Theorem \ref{thm:ZDE}.  Let $n \ge 1$.  Consider the $\bm{\alpha}\in\R^n$ for which there exists a constant $C_{\bm{\alpha}}>0$ such that\footnote{The set of vectors $\bm{\alpha}$ for which there exists $C_{\bm{\alpha}}>0$ such that \eqref{eq:condition_alpha}
holds have full Lebesgue measure in $\R^n$ by work of Kemble \cite{MR2261844} and Khintchine \cite{MR1512207}.}
\begin{equation}
    \label{eq:condition_alpha}
    |\bm{m} \cdot \bm{\alpha}| \geq C_{\bm{\alpha}}e^{-\|\bm{m}\|_2 }\qquad \textup{for all $\bm{m}\in\Z^n \setminus \{\bm{0}\}$},
\end{equation}
where $\|\bm{m}\|_p$ is the $\ell^p$ norm on $\R^n$ for $1\leq p\leq\infty$.  This is a technical artifact of our extension to $\R^n$; when $n=1$, the condition reduces to $\alpha\neq 0$.
Our density function $g_{\pi, \bm{\alpha}}(\bm{t})$, which extends \eqref{eqn:g_func_FZ} for $n\geq 2$,
is identically zero unless there exists a matrix $M=(b_{jk}) \in \mathcal{M}_{r\times n}(\Z)$
with linearly independent row vectors $\bm{b}_j$ and $\gcd(b_{j1}, \ldots, b_{jn}) = 1$ for all $1 \leq j \leq r$;
fully reduced rationals $a_1/q_1, \ldots, a_r/q_r$; and distinct primes $p_1, \ldots, p_r$ such that
\begin{equation}
    \label{eqn:matrix_condition}
    M\bm{\alpha}^{\intercal}=\Big(\frac{a_1}{q_1}\frac{\log p_1}{2\pi},\ldots,\frac{a_r}{q_r}\frac{\log p_r}{2\pi}\Big)^{\intercal}.
\end{equation}
Among such possible matrices $M$, choose one with maximal $r$, which uniquely determines the row vectors $\bm{b}_j=(b_{j1},\ldots,b_{jn})$.  If such an $M$ exists, then define
\begin{equation}
    \label{eq:densityfunction}
    g_{\pi, \bm{\alpha}}(\bm{t}) :=
    -\frac{2}{\pi} \Re\sum_{j = 1}^{r} \sum_{l = 1}^{\infty} \frac{\Lambda_{\pi}({p_j}^{a_jl})}{{p_j}^{a_jl/2}} e^{-2 \pi i q_j l (\bm{b_j} \cdot \bm{t})},
\end{equation}
where
\[
    -\frac{L^\prime(s,\pi)}{L(s,\pi)} = \sum_{n =1}^{\infty} \frac{\Lambda_{\pi}(n)}{n^s},\qquad \re(s)>1.
\]

Let $\pi\in\mathscr{A}$, $n\geq 1$, and $\mathbb{B} \subseteq \T^n$ be a product of $n$ subintervals of $\T$.  The conjecture in \eqref{eqn:conj1_zeta} can be extended to $\pi\in\mathscr{A}$ as follows:
\begin{equation}
\label{eqn:conj_GL2}
\sum_{\substack{|\gamma|\leq T \\ \{\gamma \bm{\alpha}\} \in \mathbb{B}}} 1 = \mathrm{vol}(\mathbb{B}) N_{\pi}(T) + T \int_{\mathbb{B}}g_{\pi, \bm{\alpha}}(\bm{t}) \textup{d}\bm{t} + o(T),
\end{equation}
where $\gamma$ ranges over the imaginary parts of the nontrivial zeros of $L(s,\pi)$.  As progress toward \eqref{eqn:conj_GL2}, we prove an unconditional $n$-dimensional version of \eqref{eqn:FZ} for $\GL_2$ $L$-functions.  In what follows, let $C^u(\T^n)$ be the set of $u$-times continuously differentiable functions $h\colon\T^n\to\mathbb{R}$.  Let $\gamma$ vary over the imaginary parts of the nontrivial zeros of $L(s, \pi)$.

\begin{theorem}\label{thm:densityfunction}
    Let $\pi \in \mathscr{A}$. Let $\bm{\alpha} \in \mathbb{R}^n$ satisfy \eqref{eq:condition_alpha}.
    If $h \in C^{n+2}(\T^n)$, then
    \[
        \sum_{|\gamma| \leq T} h(\gamma \bm{\alpha})
        = N_{\pi}(T)\int_{\mathbb{T}^n} h(\bm{t})\textup{d}\bm{t} + T \int_{\mathbb{T}^n} h(\bm{t}) g_{\pi, \bm{\alpha}}(\bm{t})\textup{d}\bm{t}+o(T),
    \]
    where $\textup{d}\bm{t}$ is Lebesgue measure on $\mathbb{T}^n$.
    The implied constant depends on $\pi$, $h$, and $\bm{\alpha}$.
\end{theorem}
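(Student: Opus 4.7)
The plan is to follow the Fourier-analytic strategy of Ford--Zaharescu \cite{FZ} and its multi-dimensional extensions \cite{FMZ,LZ}, with Theorem \ref{thm:ZDE} replacing the classical Selberg estimate \eqref{eqn:selberg_zde}. Because $h\in C^{n+2}(\T^n)$, integration by parts in each coordinate gives $|\hat{h}(\bm{m})|\ll_h(1+\|\bm{m}\|_\infty)^{-(n+2)}$, so the Fourier series $h(\bm{t})=\sum_{\bm{m}\in\Z^n}\hat{h}(\bm{m})e^{2\pi i\bm{m}\cdot\bm{t}}$ converges absolutely on $\T^n$. Substituting $\bm{t}=\gamma\bm{\alpha}$ and interchanging sums gives
\[
\sum_{|\gamma|\le T}h(\gamma\bm{\alpha}) = \hat{h}(\bm{0})\,N_\pi(T) + \sum_{\bm{m}\ne\bm{0}}\hat{h}(\bm{m})\,S_\pi(\bm{m}\cdot\bm{\alpha};T), \qquad S_\pi(\alpha;T):=\sum_{|\gamma|\le T}e^{2\pi i\gamma\alpha},
\]
so the $\bm{m}=\bm{0}$ term already produces $N_\pi(T)\int_{\T^n}h\,d\bm{t}$.

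The core of the argument is a Landau-type explicit formula for $S_\pi(\alpha;T)$ at each nonzero $\alpha$. Writing $x=e^{2\pi|\alpha|}>1$ and $e^{2\pi i\gamma\alpha}=x^{\sgn(\alpha)(\rho-\beta)}$, a contour integration of $x^s$ against $-L'(s,\pi)/L(s,\pi)$ (and against the analogue for $\bar\pi$ to capture the $\gamma<0$ zeros) will produce
\[
S_\pi(\alpha;T)=\begin{cases}\displaystyle -\frac{T}{\pi}\,\frac{\Lambda_\pi(p^k)}{p^{k/2}}+E_\pi(\alpha;T), & \text{if }x=p^k,\\[5pt] E_\pi(\alpha;T), & \text{otherwise},\end{cases}
\]
where $E_\pi(\alpha;T)$ splits into a classical on-line Landau contribution plus a contribution from zeros off the critical line; the latter is controlled by partial summation of $x^{\beta-1/2}$ against $N_\pi(\sigma,T)\ll T^{1-c(\sigma-1/2)}\log T$ from Theorem \ref{thm:ZDE}, and is of size $x^{O(1)}T^{1-\delta}$ for some $\delta=\delta(c)>0$. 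The maximality of $r$ and the linear independence of the rows $\bm{b}_j$ in \eqref{eqn:matrix_condition} force the $\bm{m}\in\Z^n$ for which $e^{2\pi\bm{m}\cdot\bm{\alpha}}$ is a prime power to fill out exactly the lattice $\bigoplus_{j=1}^r\Z\,q_j\bm{b}_j$, with $\bm{m}=\pm lq_j\bm{b}_j$ producing $x=p_j^{la_j}$. Pairing $\pm l$ to take real parts (using that $h$ is real-valued, so $\hat{h}(-\bm{m})=\overline{\hat{h}(\bm{m})}$) and matching the resulting Fourier expansion with \eqref{eq:densityfunction} via Parseval identifies the collective main-term contribution with $T\int_{\T^n}h(\bm{t})g_{\pi,\bm{\alpha}}(\bm{t})\,d\bm{t}$.

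The main technical obstacle will be controlling $E_\pi(\bm{m}\cdot\bm{\alpha};T)$ uniformly enough in $\bm{m}$ that summation against $\hat{h}(\bm{m})$ still contributes $o(T)$. Two effects compete: the off-line part of $E_\pi$ grows polynomially in $x=e^{2\pi|\bm{m}\cdot\bm{\alpha}|}$, which forces $\|\bm{m}\|$ to be kept small, while the on-line Landau-type terms involve $1/\log x\asymp 1/|\bm{m}\cdot\bm{\alpha}|$, which blows up when $\bm{m}\cdot\bm{\alpha}$ is close to $0$. The Diophantine hypothesis \eqref{eq:condition_alpha} is designed for the latter: $|\bm{m}\cdot\bm{\alpha}|\gg_{\bm{\alpha}} e^{-\|\bm{m}\|_2}$. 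For the former, I would truncate the Fourier series at $\|\bm{m}\|_\infty\le M$ for a suitably slowly growing $M=M(T)$ (for instance $M=(\log T)^{1/2+\epsilon}$, so that $x$ stays subpolynomial in $T$ while $M^{-2}\log T\to 0$); the head then contributes $o(T)$ via Theorem \ref{thm:ZDE}, and the tail is bounded by $\sum_{\|\bm{m}\|_\infty>M}|\hat{h}(\bm{m})|N_\pi(T)\ll_n M^{-2}T\log T=o(T)$, using the trivial bound $|S_\pi|\le N_\pi(T)\ll T\log T$ against the decay of $\hat{h}$. The $C^{n+2}$ regularity is precisely what makes this tail finite after accounting for the $\sim k^{n-1}$ lattice points of each radius $k$ in $\Z^n$.
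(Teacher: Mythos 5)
Your outline follows the paper's proof almost step for step: Fourier expansion with the $C^{n+2}$ decay $|c_{\bm{m}}|\ll\|\bm{m}\|_2^{-n-2}$, truncation at a slowly growing radius, a Landau-type formula for $\sum_{|\gamma|\le T}x^{\rho}$ (the paper's Lemma \ref{exp_sum_bound}, quoted from \cite{FSZ}), the zero density estimate to pass between $x^{i\gamma}$ and $x^{\rho-\frac12}$, the Diophantine condition \eqref{eq:condition_alpha} to control the $\log T/\log x_{\bm{m}}$ terms, and the identification of the prime-power frequencies with the lattice generated by the $q_j\bm{b}_j$. One intermediate claim is misstated but harmless: the off-line zeros do not contribute $x^{O(1)}T^{1-\delta}$, because Theorem \ref{thm:ZDE} gives no power saving for zeros with $|\beta-\frac12|\ll \log\log T/\log T$. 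The paper's Lemma \ref{substract_exp} instead pairs $\rho$ with $1-\overline{\rho}$ and integrates $tN_{\pi}(\frac12+t,T)$ to get $O(T(\log x)^2/\log T+T/(\log T)^2)$; this weaker bound still yields $o(T)$ after summation against $|c_{\bm{m}}|$, so your conclusion survives.

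The genuine gap is in your dichotomy ``main term if $x=p^k$, error otherwise.'' When $x_{\bm{m}}$ is close to, but not equal to, an integer, the Landau formula produces the resonance term $-\frac{\Lambda_{\pi}(\langle x_{\bm{m}}\rangle)}{\pi}\cdot\frac{e^{iT\log(x_{\bm{m}}/\langle x_{\bm{m}}\rangle)}-1}{i\log(x_{\bm{m}}/\langle x_{\bm{m}}\rangle)}$, which is of size $\min\big(T,\,2|\log(x_{\bm{m}}/\langle x_{\bm{m}}\rangle)|^{-1}\big)$ and can be as large as $T$ for individual $\bm{m}$ of moderate norm; a uniform bound summed against $|c_{\bm{m}}|$ diverges, since $x_{\bm{m}}$ may lie within $e^{-\|\bm{m}\|_2}$ of an integer. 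Neither of the two ``competing effects'' you identify (growth in $x$ of the off-line error, and smallness of $\bm{m}\cdot\bm{\alpha}$) covers this third one. The paper disposes of it by a dominated-convergence argument in \eqref{eqn:second_main_2}: after dividing by $T$, each such term tends to $0$ for fixed $\bm{m}$ because $\log(x_{\bm{m}}/\langle x_{\bm{m}}\rangle)\neq 0$ is fixed, while the tail over $\|\bm{m}\|_2>M_{\epsilon}$ is uniformly small by absolute convergence of $\sum_{\bm{m}}|c_{\bm{m}}|\Lambda_{\pi}(\langle x_{\bm{m}}\rangle)x_{\bm{m}}^{-1/2}$. You need to add this step; it is also precisely the point where the error term degrades from a quantitative rate to the bare $o(T)$ in the statement.
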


\begin{remark}
    With extra work, we may allow $h\in C^{n+1}(\T^n)$.  Also, under some extra conditions on $\bm{\alpha}$ which excludes a density zero subset of $\mathbb{R}^n$, we expect to improve the error term to $O(T(\log\log T)/\log T)$.  We will return to this in future work.
\end{remark}

Define the discrepancy
\[
    D_{\pi, \bm{\alpha}}(T) := \sup_{\mathbb{B}\subseteq\T^n} \Big|\frac{1}{N_{\pi}(T)}
    \sum_{\substack{|\gamma|\leq T \\ \{\gamma \bm{\alpha}\} \in \mathbb{B}}}1-\mathrm{vol}(\mathbb{B})\Big|.
\]
The following result follows quickly from Theorem \ref{thm:densityfunction}.

\begin{corollary}
    \label{cor:FSZ}
    Let $\pi\in\mathscr{A}$.  If $\bm{\alpha}\in\R^n$ satisfies \eqref{eq:condition_alpha}, then
    \[
        D_{\pi, \bm{\alpha}}(T) \geq \frac{T}{N_{\pi}(T)}\int_{\T^n}g_{\pi, \alpha}(\bm{t})\textup{d}\bm{t}+o\Big(\frac{1}{\log T}\Big).
    \]
\end{corollary}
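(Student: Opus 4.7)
The plan is to test the asymptotic of Theorem \ref{thm:densityfunction} against smooth upper and lower approximations of the indicator of a near-extremal box $\mathbb{B}_{0}\subseteq\T^{n}$, and then read off the lower bound on the discrepancy. Before starting, I would note that $g_{\pi,\bm{\alpha}}$ is continuous and bounded on $\T^{n}$: since $\theta\le 7/64<1/2$, the bound $|\Lambda_{\pi}(p_{j}^{a_{j}l})|\ll p_{j}^{a_{j}l\theta}\log(p_{j}^{a_{j}l})$ forces the series \eqref{eq:densityfunction} to converge absolutely and uniformly in $\bm{t}$. Writing $S:=\sup_{\mathbb{B}}|\int_{\mathbb{B}}g_{\pi,\bm{\alpha}}(\bm{t})\,d\bm{t}|$, where $\mathbb{B}$ runs over products of $n$ subintervals of $\T$, $S$ is the quantity appearing on the right-hand side of Corollary \ref{cor:FSZ} (note that $\int_{\T^{n}}g_{\pi,\bm{\alpha}}(\bm{t})\,d\bm{t}=0$, since every Fourier mode in \eqref{eq:densityfunction} has nonzero frequency).

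Given $\delta,\eta>0$, I would first pick a box $\mathbb{B}_{0}$ with $|\int_{\mathbb{B}_{0}}g_{\pi,\bm{\alpha}}(\bm{t})\,d\bm{t}|\ge S-\delta$, then convolve $\mathbf{1}_{\mathbb{B}_{0}}$ with a smooth bump of support radius $\eta$, adjusted upward and downward, to produce $h^{\pm}\in C^{n+2}(\T^{n})$ satisfying $h^{-}\le \mathbf{1}_{\mathbb{B}_{0}}\le h^{+}$ and $\int_{\T^{n}}(h^{+}-h^{-})\,d\bm{t}\ll\eta$. Applying Theorem \ref{thm:densityfunction} to $h^{+}$ and $h^{-}$, sandwiching
\[
\sum_{|\gamma|\le T}h^{-}(\gamma\bm{\alpha})\ \le\ \#\{|\gamma|\le T\colon\{\gamma\bm{\alpha}\}\in\mathbb{B}_{0}\}\ \le\ \sum_{|\gamma|\le T}h^{+}(\gamma\bm{\alpha}),
\]
and comparing $\int h^{\pm}g_{\pi,\bm{\alpha}}\,d\bm{t}$ with $\int_{\mathbb{B}_{0}}g_{\pi,\bm{\alpha}}\,d\bm{t}$ via $\|g_{\pi,\bm{\alpha}}\|_{\infty}<\infty$, I would obtain
\[
\#\{|\gamma|\le T\colon\{\gamma\bm{\alpha}\}\in\mathbb{B}_{0}\}=\mathrm{vol}(\mathbb{B}_{0})N_{\pi}(T)+T\!\int_{\mathbb{B}_{0}}g_{\pi,\bm{\alpha}}\,d\bm{t}+O(\eta N_{\pi}(T))+o_{h^{\pm}}(T).
\]
Dividing by $N_{\pi}(T)\sim(2/\pi)T\log T$ from \eqref{eq:number_of_zeros}, taking absolute values, and invoking $D_{\pi,\bm{\alpha}}(T)\ge |N_{\pi,\bm{\alpha}}(T,\mathbb{B}_{0})/N_{\pi}(T)-\mathrm{vol}(\mathbb{B}_{0})|$ (the estimate being symmetric in the sign of $\int_{\mathbb{B}_{0}}g_{\pi,\bm{\alpha}}$) then yields
\[
D_{\pi,\bm{\alpha}}(T)\ \ge\ \frac{T}{N_{\pi}(T)}\,(S-\delta)-O(\eta)-o_{h^{\pm}}\!\Big(\frac{1}{\log T}\Big).
\]

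The main obstacle is the diagonalization $\delta=\delta(T)\to 0$, $\eta=\eta(T)\to 0$. Because the $C^{n+2}$-norms of $h^{\pm}$ blow up as $\eta\to 0$, and the $o(T)$ in Theorem \ref{thm:densityfunction} depends on $h^{\pm}$, I would need $\eta(T)$ to shrink slowly enough to preserve an $o(T/\log T)$ error. Concretely, for each fixed $\eta$, Theorem \ref{thm:densityfunction} provides a threshold $T_{0}(\eta)$ beyond which its error is at most $T/((\log T)f(\eta))$ for any chosen $f(\eta)\to\infty$, and I would select $\eta(T)\to 0$ satisfying both $T\ge T_{0}(\eta(T))$ and $\eta(T)=o(1/\log T)$. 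This is the standard device used by Ford, Soundararajan, and Zaharescu \cite{FSZ} in proving \eqref{eqn:thm_fsz_1.1}; with it in place, the $-O(\eta)-o(1/\log T)$ error collapses to $o(1/\log T)$, and the corollary follows.
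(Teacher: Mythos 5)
Your plan runs into a genuine obstruction at exactly the point you flag as "the main obstacle," and the diagonalization you propose does not resolve it. In the sandwich approach, the inequality $h^{-}\le\mathbf{1}_{\mathbb{B}_0}\le h^{+}$ costs you $\int_{\T^n}h^{\pm}\,\textup{d}\bm{t}=\mathrm{vol}(\mathbb{B}_0)+O(\eta)$, hence an error $O(\eta N_{\pi}(T))$ in the zero count and $O(\eta)$ in the normalized discrepancy. Since the main term $\frac{T}{N_{\pi}(T)}S$ is itself $\asymp 1/\log T$, you genuinely need $\eta(T)\log T\to 0$. But the only information Theorem \ref{thm:densityfunction} gives for the smoothed functions is an error $o_{\eta}(T)$ with an unspecified threshold $T_{0}(\eta)$ (your stronger claim that the error is $\le T/((\log T)f(\eta))$ for $T\ge T_{0}(\eta)$ overstates the theorem, which yields $o(T)$, not $o(T/\log T)$ — though after dividing by $N_{\pi}(T)\asymp T\log T$ the $o(T)$ form is in fact all you need from that term). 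The diagonalization therefore only lets $\eta(T)$ tend to $0$ subject to $T\ge T_{0}(\eta(T))$, and nothing rules out $T_{0}(\eta)$ growing faster than $e^{1/\eta}$; in that case $T\ge T_{0}(\eta(T))$ forces $\eta(T)\log T\ge 1$, and the requirement $\eta(T)=o(1/\log T)$ is unattainable. So the two constraints you impose on $\eta(T)$ cannot be simultaneously guaranteed, and the argument does not close.

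The paper avoids this entirely by never comparing a smoothed volume to $\mathrm{vol}(\mathbb{B}_0)$. It takes $h_{\epsilon}=\varphi_{\epsilon}*\mathbf{1}_{\mathbb{B}}$ with $\int\varphi_{\epsilon}=1$, so that $\int_{\T^n}h_{\epsilon}\,\textup{d}\bm{t}=\mathrm{vol}(\mathbb{B})$ exactly, and unfolds $\sum_{|\gamma|\le T}h_{\epsilon}(\gamma\bm{\alpha})$ as the $\varphi_{\epsilon}$-average over $\bm{y}$ of $\#\{|\gamma|\le T:\{\gamma\bm{\alpha}\}\in\mathbb{B}+\bm{y}\}$. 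Each translate $\mathbb{B}+\bm{y}$ has the same volume as $\mathbb{B}$, so the only $\epsilon$-dependent loss is $|\int_{\mathbb{B}+\bm{y}}g-\int_{\mathbb{B}}g|\ll\epsilon$, contributing $O(\epsilon T)$, i.e.\ $O(\epsilon/\log T)$ after normalization — acceptable for any $\epsilon\to 0$, however slow. The mean value theorem then produces a single translate $\mathbb{B}+\bm{y}$ whose discrepancy is at least $\frac{T}{N_{\pi}(T)}|\int_{\mathbb{B}}g|+o(1/\log T)$, which suffices because $D_{\pi,\bm{\alpha}}(T)$ is a supremum over all boxes, not over $\mathbb{B}$ alone. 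If you want to repair your write-up, replace the majorant/minorant pair by this convolve-unfold-and-pigeonhole step. (Your reading of the statement — with $\sup_{\mathbb{B}}|\int_{\mathbb{B}}g_{\pi,\bm{\alpha}}|$ in place of the literal $\int_{\T^n}g_{\pi,\bm{\alpha}}$, which vanishes — is the correct one and matches what the paper actually proves.)
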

When $n=1$, we recover an unconditional analogue of \eqref{eqn:thm_fsz_1.1} for all $\pi\in\mathscr{A}$.This special case of Corollary \ref{cor:FSZ} was proved in \cite{FSZ} under the hypothesis of a zero density estimate like that in Theorem \ref{thm:ZDE}.

\begin{remark}
    Let $d\geq 3$ be an integer, and let $L(s,\pi)$ be the standard $L$-function associated to a cuspidal automorphic representation $\pi$ of $\GL_d$ over $\Q$.
    Let $N_{\pi}(\sigma,T)$ be the number of nontrivial zeros $\beta+i\gamma$ of $L(s,\pi)$ with $\beta\geq\sigma$ and $|\gamma|\leq T$.
    If there exist constants $c_{\pi}>0$ and $d_{\pi}>0$ such that $N_{\pi}(\sigma,T)\ll T^{1-c_{\pi}(\sigma-\frac{1}{2})}(\log T)^{d_{\pi}}$,
    then one can prove an analogue of Theorem \ref{thm:densityfunction} and Corollary \ref{cor:FSZ} for $L(s,\pi)$.
    Such an estimate for $N_{\pi}(\sigma,T)$ is not yet known for any $d\geq 3$, and it appears to be quite difficult to prove.
\end{remark}

\subsection{Application to ``zero races''}
In a letter to Fuss, Chebyshev observed that the generalized Riemann hypothesis implies that primes $p\equiv 3\pmod{4}$ tend to be more numerous than primes $p\equiv 1\pmod{4}$.  Using the generalized Riemann hypothesis and other hypotheses, Rubinstein and Sarnak \cite{RS} began a systematic study of ``prime number races'' in which they determine how often $\pi(x;4,3)>\pi(x;4,1)$, where $\pi(x;q,a)$ equals $\#\{p\leq x\colon p\equiv a\pmod{q}\}$.  They proved that
\[
    \lim_{X\to\infty}\frac{1}{\log X}\int_{\substack{2\leq t\leq X \\ \pi(t;4,3)>\pi(t;4,1)}}\frac{dt}{t} = 0.9959\ldots
\]
Thus the ``bias'' toward primes of the form $4n+3$ is quite strong.  The literature on prime number races which study such inequities is quite vast.  See, for instance, the work of Fiorilli, Ford, Harper, Konyagin, Lamzouri, and Martin \cite{Fiorilli_Martin,FK, FLK, FHL}.

We use Theorem \ref{thm:densityfunction} to study inequities not between primes in different residue classes,
but between zeros of different $L$-functions.   Define
\[
    \mathscr{C}^{r}(\T^n) := \{h \in C^{r}(\T^n)\colon \textup{$h$ is nonnegative and not identically zero}\}.
\]
Let $h\in\mathscr{C}^{n+2}(\T^n)$, and let $\bm{\alpha}$ satisfy \eqref{eq:condition_alpha}.  We consider two holomorphic cusp forms $f_1$ and $f_2$ with trivial nebentypus, where
\[
    f_j(z) = \sum_{n=1}^{\infty}\lambda_{f_j}(n)n^{\frac{k_j-1}{2}}e^{2\pi i n z}\in S_{k_j}^{\mathrm{new}}(\Gamma_0(q_j)),\qquad j\in\{1,2\}
\]
have trivial nebentypus, even integral weights $k_j\geq 2$ and levels $q_j\geq 1$.  We assume that $f_j$ is normalized so that $\lambda_{f_j}(1)=1$ and that $f_j$ is an eigenfunction of all of the Hecke operators.  We call such cusp forms {\it newforms} (see \cite[Section 2.5]{Ono}).  It is classical that there exists $\pi_j\in\mathscr{A}$ such that $L(s,f_j)=L(s,\pi_j)$, so $\lambda_{f_j}(n) = \lambda_{\pi_j}(n)$, $g_{f_j,\bm{\alpha}}(\bm{t})=g_{\pi_j,\bm{\alpha}}(\bm{t})$, etc.

We say $f_1$ wins the $(\bm{\alpha}, h)$-race (against $f_2$) if for all large $T$, we have
\[
    \sum_{|\gamma_1| \le T} h(\bm{\alpha} \gamma_1)> \sum_{|\gamma_2| \le T} h(\bm{\alpha} \gamma_2),
\]
where $\gamma_j$ runs over the imaginary parts of the nontrivial zeros of $L(s, f_j)$.  If neither $f_1$ nor $f_2$ wins the $(\bm{\alpha}, h)$-race, we say the race is undecided.  The results of \cite{LZ} suggest that the winner of a decided $(\bm{\alpha},h)$-race is determined by the levels $q_1$ and $q_2$ and the behavior of $g_{f_1, \bm{\alpha}}$ and $g_{f_2, \bm{\alpha}}$.

First, we show that if $q_1 = q_2$ and $h \in \mathscr{C}^{3}(\T)$,
then the proportion of primes $p$ such that $f_1$ wins the $(\frac{\log p}{2\pi}, h)$-race is $\frac{1}{2}$.

\begin{corollary}\label{cor:equalconductors}
    For $j=1,2$, let $f_j  \in S_{k_j}^{\mathrm{new}} (\Gamma_0(q_j))$ be non-CM newforms.  Suppose that $f_1\neq f_2\otimes\chi$ for all primitive Dirichlet characters $\chi$.  If $h \in \mathscr{C}^{3}(\T)$, then
    \[
    \frac{\#\{ p \le X \colon \text{$f_1$ wins the $(\frac{\log p}{2\pi}, h)$-race}\}}{\#\{p\leq X\}} = \frac{1}{2}+O\Big(\frac{\sqrt{\log\log\log X}}{(\log\log X)^{1/4}}\Big).
    \]
\end{corollary}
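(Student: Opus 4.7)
The plan is to use Theorem~\ref{thm:densityfunction} to reduce the race question to equidistribution of Satake parameters, and then exploit the swap-symmetry of the product Sato--Tate measure. With $\bm{\alpha} = \tfrac{\log p}{2\pi}$ (condition \eqref{eq:condition_alpha} is automatic), Theorem~\ref{thm:densityfunction} gives for each $f_j$
\[
\sum_{|\gamma_j|\le T} h(\alpha\gamma_j) = N_{f_j}(T)\int_{\T} h(t)\,dt + T\,I_j(p) + o_p(T),\qquad I_j(p):=\int_{\T} h(t)\,g_{f_j,\alpha}(t)\,dt.
\]
The Riemann--von Mangoldt formula for $\GL_2$ $L$-functions yields $N_{f_j}(T) = \tfrac{2T}{\pi}\log T + \tfrac{T}{\pi}\log(q_j/(2\pi)^2) + O(\log T)$, the weight entering only in the $O(\log T)$ term; since $q_1 = q_2$, we obtain $N_{f_1}(T) - N_{f_2}(T) = O(\log T)$. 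Consequently, for each fixed prime $p$, $f_1$ wins the race exactly when $\Delta(p) := I_1(p) - I_2(p) > 0$ and loses when $\Delta(p) < 0$.

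Expanding via \eqref{eq:densityfunction}, together with $\Lambda_{f_j}(p^l) = 2\cos(l\theta_{f_j}(p))\log p$ for $p\nmid q_1$, gives
\[
\Delta(p) = -\frac{4\log p}{\pi}\sum_{l=1}^\infty \frac{c_l(h)\bigl(\cos(l\theta_{f_1}(p)) - \cos(l\theta_{f_2}(p))\bigr)}{p^{l/2}},\qquad c_l(h) := \int_{\T} h(t)\cos(2\pi l t)\,dt.
\]
In the main case where some $c_{l_0}(h)$ with $l_0 \ge 1$ is nonzero (the degenerate case $c_l(h) = 0$ for all $l \ge 1$ requires a separate analysis of the $O(\log T)$ and $o_p(T)$ terms), pick $l_0$ minimal and isolate the leading term:
\[
\Delta(p) = -\frac{4c_{l_0}(h)\log p}{\pi p^{l_0/2}}\bigl(\cos(l_0\theta_{f_1}(p)) - \cos(l_0\theta_{f_2}(p))\bigr) + O_h\bigl(p^{-(l_0+1)/2}\log p\bigr),
\]
so the sign of $\Delta(p)$ is governed by $\cos(l_0\theta_{f_1}(p)) - \cos(l_0\theta_{f_2}(p))$ outside the near-equality set where this difference is $\ll p^{-1/2}$.

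Because $f_j$ is non-CM and $f_1 \neq f_2\otimes\chi$ for any primitive $\chi$, the joint Sato--Tate law (via Newton--Thorne's establishment of symmetric-power automorphy together with standard Rankin--Selberg analysis) gives equidistribution of $(\theta_{f_1}(p), \theta_{f_2}(p))$ in $[0,\pi]^2$ against $\mu_{\mathrm{ST}}\otimes\mu_{\mathrm{ST}}$. Since this product measure is symmetric under swapping coordinates, the proportion of primes with $\cos(l_0\theta_{f_1}(p)) > \cos(l_0\theta_{f_2}(p))$ equals exactly $\tfrac{1}{2}$, which produces the main term in the corollary.

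The main obstacle is extracting the quantitative rate $O(\sqrt{\log\log\log X}/(\log\log X)^{1/4})$. I would combine an effective form of joint Sato--Tate (using zero-free regions and moment bounds for $L(s, f_1\otimes f_2)$ together with higher symmetric-power Rankin--Selberg $L$-functions) with Beurling--Selberg majorants and minorants to smooth the indicator $\mathbf{1}[\cos(l_0\theta_{f_1}) > \cos(l_0\theta_{f_2})]$, and separately estimate the contribution of the near-equality set $\{p \le X : |\cos(l_0\theta_{f_1}(p)) - \cos(l_0\theta_{f_2}(p))| \ll p^{-1/2}\}$. The precise balance between the Beurling--Selberg truncation parameter and the strength of the available effective joint Sato--Tate input is what I expect to be the most delicate step; optimizing it should produce the stated rate.
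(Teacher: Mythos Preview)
Your overall strategy matches the paper's: apply Theorem~\ref{thm:densityfunction}, cancel the main terms using $q_1=q_2$, reduce the sign of the difference to the sign of the leading Fourier contribution, and then invoke the swap-symmetry of the product Sato--Tate measure to get the $\tfrac12$. The paper carries this out with the decomposition $T_1-T_2+T_3$, where $T_1$ counts primes with $k_h(\lambda_{f_2}(p)-\lambda_{f_1}(p))>0$ and $T_2,T_3$ absorb the near-boundary primes where the tail $O(p^{-1}\log p)$ from Lemma~\ref{lem:truncation} could flip the sign.

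The genuine gap in your proposal is the quantitative step. You propose to \emph{derive} an effective joint Sato--Tate bound via Newton--Thorne automorphy, Rankin--Selberg analysis, and Beurling--Selberg majorants, and then ``optimize'' to recover the rate $\sqrt{\log\log\log X}/(\log\log X)^{1/4}$. This is precisely the hard input, and you have not executed it. The paper does not redo any of this: it quotes Theorem~\ref{thm:effectiveST} (Thorner's effective joint Sato--Tate with error $\ll (\log\log\log X)/\sqrt{\log\log X}$) as a black box, and then proves Corollary~\ref{cor:STlimit} by a simple box-covering argument that converts this into a bound for $\#\{p\le X:\lambda_{f_1}(p)-\lambda_{f_2}(p)\in\mathcal I\}$ with the stated $(\log\log X)^{-1/4}$ rate. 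That corollary is then applied three times: once with $\mathcal I=(0,4)$ to evaluate $T_1$, and twice with shrinking intervals of length $O(X^{-1/4})$ to show $T_2,T_3$ are negligible. Without invoking Theorem~\ref{thm:effectiveST} (or an equivalent), your argument does not produce any explicit rate.

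A secondary issue: your treatment of a general minimal index $l_0\ge 2$ is more ambitious than the paper's (which works only with $l_0=1$, i.e.\ with $k_h$), but it creates a real complication you do not address. For $l_0\ge 2$ the relevant quantity $\cos(l_0\theta_{f_j}(p))$ is a degree-$l_0$ Chebyshev polynomial in $\lambda_{f_j}(p)/2$, so the set $\{\cos(l_0\theta_{f_1})>\cos(l_0\theta_{f_2})\}$ is not described by a single box or strip in $(\lambda_{f_1},\lambda_{f_2})$; applying Theorem~\ref{thm:effectiveST} then requires a further partition-and-cover step that you have not supplied.
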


Next we look at the distribution of
\[
H(f_1, f_2, h, \alpha) := \lim_{T \to \infty} \frac{\sum_{|\gamma_1| < T} h( \alpha \gamma_1 )
    - \sum_{|\gamma_2| < T} h(\alpha \gamma_2 ) }{T}
    - \frac{1}{\pi}\log\frac{q_{1}}{q_{2}}\int_{\mathbb{T}}h(t)dt
\]
as $\alpha$ varies over values of $\frac{\log p}{2\pi}$ for prime values of $p$.  Given $I \subseteq [-2, 2]$, let $\mu_{\mathrm{ST}}(I)$ be the Sato--Tate measure $1/(2\pi)\int_{I}\sqrt{4-t^2}dt$,
and let $\mu_{ST,2}$ be the product measure defined on boxes $I_1\times I_2\subseteq [-2,2]\times[-2,2]$ by $\mu_{\mathrm{ST},2}(I_1\times I_2)=\mu_{\mathrm{ST}}(I_1)\mu_{\mathrm{ST}}(I_2)$. For $\mathcal{I} \subseteq [-4,4]$, we define
\begin{equation}
\label{eq:nu_measure}
    \nu(\mathcal{I}) := \mu_{ST,2} ( \{ (x,y) \in [-2,2]^2: x - y \in \mathcal{I} \} ).
\end{equation}
For $h \in \mathscr{C}^3(\T)$, we set
    \[
        k_h := \int_0^1 h(t) \cos(2 \pi t) dt.
    \]

In the following statement and throughout the paper, for any interval $I$ and $\beta \in \mathbb{R}$, we let $\beta I = \{ \beta x: x \in I \}$.

\begin{corollary}\label{thm:STracedist}
    For $j=1,2$, let $f_j  \in S_{k_j}^{\mathrm{new}} (\Gamma_0(q_j))$ be normalized holomorphic non-CM newforms with trivial nebentypus.  Suppose that $f_1\neq f_2\otimes\chi$ for all primitive Dirichlet characters $\chi$.      Let $$\epsilon_X := \frac{(\log\log\log X)^{1/4}}{(\log\log X)^{1/8}}. $$ Assume $h \in \mathscr{C}^3(\T)$ is such that $k_h \neq 0$.
    Then for any interval $\mathcal{I}\subseteq[-4,4]$, we have
\[
\frac{\# \{ p \in [(1-\epsilon_X)X, X]\colon \frac{\log{X}}{\sqrt{X}} H(f_1, f_2, h, \frac{\log{p}}{2 \pi}) \in \mathcal{I} \}}{\#\{p\in[(1-\epsilon_X)X,X]\}}= \nu\Big(\frac{\pi}{2 k_h} \mathcal{I}\Big)+ O( \epsilon_X )
\]
    with an implied constant independent of $\mathcal{I}$.
\end{corollary}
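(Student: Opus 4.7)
The plan is to apply Theorem~\ref{thm:densityfunction} to reduce $H$ to a concrete integral, extract its leading asymptotic at $\alpha = \tfrac{\log p}{2\pi}$, and finish via an effective joint Sato--Tate theorem for the pair $(\lambda_{f_1}(p), \lambda_{f_2}(p))$ over primes in the short interval $[(1-\epsilon_X)X, X]$.

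First, applying Theorem~\ref{thm:densityfunction} with $n=1$ to each of $\pi_1, \pi_2$ and subtracting, the Riemann--von Mangoldt asymptotic $N_{\pi_j}(T) = \tfrac{T}{\pi}\log\tfrac{q_j T^2}{(2\pi e)^2} + O(\log T)$ gives $(N_{\pi_1}(T) - N_{\pi_2}(T))/T \to \tfrac{1}{\pi}\log(q_1/q_2)$, cancelling exactly the subtraction in the definition of $H$ and leaving
\[
H(f_1, f_2, h, \alpha) = \int_{\T}h(t)\bigl(g_{\pi_1,\alpha}(t) - g_{\pi_2,\alpha}(t)\bigr)\,\textup{d}t.
\]
For $\alpha = \tfrac{\log p}{2\pi}$ with $p$ prime, the only admissible matrix in \eqref{eqn:matrix_condition} with $n = r = 1$ has $b_{11} = a_1 = q_1 = 1$ and $p_1 = p$, so
\[
g_{\pi_j, \frac{\log p}{2\pi}}(t) = -\tfrac{2}{\pi}\Re\sum_{l=1}^{\infty} \tfrac{\Lambda_{\pi_j}(p^l)}{p^{l/2}}e^{-2\pi i l t}.
\]
Writing $\Lambda_{\pi_j}(p^l) = (\alpha_j(p)^l + \beta_j(p)^l)\log p$ with $|\alpha_j(p)|, |\beta_j(p)|\le 1$ (Deligne) and isolating the $l = 1$ term, integration against $h$ yields
\[
H(f_1, f_2, h, \tfrac{\log p}{2\pi}) = -\tfrac{2 k_h \log p}{\pi\sqrt{p}}\bigl(\lambda_{f_1}(p) - \lambda_{f_2}(p)\bigr) + O\!\left(\tfrac{\log p}{p}\right).
\]
For $p \in [(1-\epsilon_X)X, X]$ we have $\log p/\sqrt p = (\log X/\sqrt X)(1 + O(\epsilon_X))$, so after the natural normalization by $\sqrt X/\log X$ the scaled statistic equals $-\tfrac{2 k_h}{\pi}(\lambda_{f_1}(p) - \lambda_{f_2}(p)) + O(\epsilon_X)$. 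Consequently the event in question coincides, up to an $O(\epsilon_X)$-thickening of $\partial\mathcal{I}$, with $\{\lambda_{f_1}(p) - \lambda_{f_2}(p) \in -\tfrac{\pi}{2 k_h}\mathcal{I}\}$.

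The final step applies an effective joint Sato--Tate theorem for $(\lambda_{f_1}(p), \lambda_{f_2}(p))$ over primes in $[(1-\epsilon_X)X, X]$. The hypotheses $f_j$ non-CM and $f_1 \neq f_2\otimes\chi$ ensure, via Newton--Thorne's symmetric power functoriality for $\GL_2$, that every Rankin--Selberg $L$-function $L(s, \mathrm{Sym}^{m_1}\pi_1 \times \mathrm{Sym}^{m_2}\pi_2)$ is entire and nonvanishing on $\Re(s) = 1$; a quantitative joint Sato--Tate estimate then yields an $O(\epsilon_X)$ error in the equidistribution of $(\lambda_{f_1}(p), \lambda_{f_2}(p))$ with respect to $\mu_{\mathrm{ST},2}$. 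Choosing $J = \{(x,y)\colon x - y \in -\tfrac{\pi}{2 k_h}\mathcal{I}\}$ and using the invariance of $\mu_{\mathrm{ST},2}$ under $(x,y)\mapsto(-x,-y)$ (so $\nu(-\mathcal{J}) = \nu(\mathcal{J})$) produces the stated $\nu(\tfrac{\pi}{2 k_h}\mathcal{I}) + O(\epsilon_X)$.

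The main obstacle is this last step: the short-interval effective joint Sato--Tate estimate with error matched to $\epsilon_X = (\log\log\log X)^{1/4}/(\log\log X)^{1/8}$. This requires short-interval prime number theorems for all tensor-product symmetric power $L$-functions $L(s, \mathrm{Sym}^{m_1}\pi_1\times\mathrm{Sym}^{m_2}\pi_2)$, uniform in $(m_1,m_2)$, combined with a Beurling--Selberg approximation of $\mathbf{1}_J$ whose truncation parameter is tuned against the short-interval loss. The specific shape of $\epsilon_X$ is what emerges from optimizing those two error sources against each other, and the bounded density of $\nu$ absorbs the $O(\epsilon_X)$-boundary thickening introduced in the second step.
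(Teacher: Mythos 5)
Your first two steps track the paper's argument: Theorem \ref{thm:densityfunction} together with Lemma \ref{lem:truncation} and \eqref{eq:zerocount} gives $H(f_1,f_2,h,\tfrac{\log p}{2\pi}) = \tfrac{2k_h}{\pi}\cdot\tfrac{\log p\,(\lambda_{f_2}(p)-\lambda_{f_1}(p))}{\sqrt{p}} + O_h(\tfrac{\log p}{p})$, and for $p\in[(1-\epsilon_X)X,X]$ the normalized statistic agrees with $\tfrac{2k_h}{\pi}(\lambda_{f_2}(p)-\lambda_{f_1}(p))$ up to an $O(\epsilon_X)$-thickening of $\partial\mathcal{I}$, which $\nu$ absorbs. (You also correctly identified that the natural normalization is $\sqrt{X}/\log X$, matching the paper's proof rather than the statement.)

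The gap is in your final step, which you yourself flag as ``the main obstacle'' and then do not carry out: you posit that one needs a genuinely short-interval effective joint Sato--Tate theorem for $[(1-\epsilon_X)X,X]$, to be obtained from short-interval prime number theorems for all $L(s,\mathrm{Sym}^{m_1}\pi_1\times\mathrm{Sym}^{m_2}\pi_2)$ uniformly in $(m_1,m_2)$. No such input is available in the paper, and none is needed. The paper instead applies the \emph{long-interval} estimate (Theorem \ref{thm:effectiveST} via Corollary \ref{cor:STlimit}) twice --- once at $X$ and once at $(1-\epsilon_X)X$ --- and differences. Each application carries error $O\bigl(\pi(X)\,\tfrac{\sqrt{\log\log\log X}}{(\log\log X)^{1/4}}\bigr) = O(\pi(X)\epsilon_X^2)$, while the prime number theorem gives $\pi(X)-\pi((1-\epsilon_X)X)\sim\epsilon_X\pi(X)$; dividing, the relative error is $O(\epsilon_X^2\pi(X)/(\epsilon_X\pi(X))) = O(\epsilon_X)$. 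This is exactly why $\epsilon_X$ is defined as the square root of the error term in Corollary \ref{cor:STlimit} --- not, as you suggest, from balancing a Beurling--Selberg truncation against a short-interval loss. As written, your proof is incomplete at precisely the step that makes the corollary work, and the route you propose to fill it is substantially harder than the differencing argument the paper actually uses.
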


If $q_1 > q_2$, then in contrast to Corollary \ref{cor:equalconductors},
$f_1$ wins the $(\frac{\log p}{2\pi},h)$-race for all except finitely many primes $p$.

\begin{corollary}
\label{thm:distinctconductors}
For $j=1,2$, let $f_j  \in S_{k_j}^{\mathrm{new}} (\Gamma_0(q_j))$ be normalized holomorphic newforms with trivial nebentypus.  If $q_1>q_2$, then for any $h \in \mathscr{C}^3(\T)$, there are at most finitely many primes $p$ such that $\pi_2$ wins the $(\frac{\log p}{2\pi},h)$-race.
\end{corollary}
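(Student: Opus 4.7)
The plan is to apply Theorem \ref{thm:densityfunction} with $n=1$ and $\bm{\alpha}=\alpha_p:=\frac{\log p}{2\pi}$ to both $\pi_1$ and $\pi_2$, and then take the difference. Note $\alpha_p\neq 0$ so \eqref{eq:condition_alpha} holds, and $h\in\mathscr{C}^3(\T)\subset C^{n+2}(\T)$ meets the smoothness hypothesis. The theorem yields, for each prime $p$,
\[
\sum_{|\gamma_j|\leq T}h(\alpha_p\gamma_j)=N_{\pi_j}(T)\int_{\T}h(t)dt+T\int_{\T}h(t)g_{\pi_j,\alpha_p}(t)dt+o_p(T),\qquad j=1,2.
\]
Subtracting and using the Riemann--von Mangoldt formula for $\GL_2$ $L$-functions, which gives $N_{\pi_1}(T)-N_{\pi_2}(T)=\frac{T}{\pi}\log(q_1/q_2)+O(\log T)$, we obtain
\[
\sum_{|\gamma_1|\leq T}h(\alpha_p\gamma_1)-\sum_{|\gamma_2|\leq T}h(\alpha_p\gamma_2)=\frac{T\log(q_1/q_2)}{\pi}\int_{\T}h+T\int_{\T}h\cdot(g_{\pi_1,\alpha_p}-g_{\pi_2,\alpha_p})+o_p(T).
\]

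Next I would unwind the density function when $\bm{\alpha}=\alpha_p$. Here the maximal matrix $M$ in \eqref{eqn:matrix_condition} is simply $M=(1)$ with $r=1$, $p_1=p$, and $a_1/q_1=1/1$, so \eqref{eq:densityfunction} simplifies and Parseval gives
\[
\int_{\T}h(t)g_{\pi_j,\alpha_p}(t)dt=-\frac{2}{\pi}\Re\sum_{l=1}^{\infty}\frac{\Lambda_{\pi_j}(p^l)}{p^{l/2}}\hat{h}(l),\qquad \hat{h}(l):=\int_{\T}h(t)e^{-2\pi i lt}dt.
\]
Because $h\in C^3(\T)$, repeated integration by parts gives $|\hat{h}(l)|\ll_h l^{-3}$. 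Because $f_j$ is a holomorphic newform, Deligne's theorem yields $|\Lambda_{\pi_j}(p^l)|\leq 2\log p$. The $l=1$ term dominates, so
\[
\Big|\int_{\T}h\cdot g_{\pi_j,\alpha_p}\Big|\ll_h \frac{\log p}{\sqrt{p}}.
\]

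Combining these ingredients, for each prime $p$ we get
\[
\lim_{T\to\infty}\frac{1}{T}\Big(\sum_{|\gamma_1|\leq T}h(\alpha_p\gamma_1)-\sum_{|\gamma_2|\leq T}h(\alpha_p\gamma_2)\Big)=\frac{\log(q_1/q_2)}{\pi}\int_{\T}h(t)dt+O_h\Big(\frac{\log p}{\sqrt{p}}\Big).
\]
Since $q_1>q_2$ and $h$ is nonnegative and not identically zero, $\frac{\log(q_1/q_2)}{\pi}\int_{\T}h$ is a strictly positive constant depending only on $q_1,q_2,h$, while the error tends to $0$ as $p\to\infty$. Hence there exists $P_0=P_0(f_1,f_2,h)$ such that for every prime $p>P_0$ the limit is strictly positive; for such $p$, the difference $\sum_{|\gamma_1|\leq T}h(\alpha_p\gamma_1)-\sum_{|\gamma_2|\leq T}h(\alpha_p\gamma_2)$ is eventually positive in $T$, so $\pi_2$ cannot win the $(\alpha_p,h)$-race. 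Only the finitely many primes $p\leq P_0$ remain, which proves the corollary.

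The proof is essentially a clean corollary of Theorem \ref{thm:densityfunction} and poses no serious obstacle; the only care needed is to verify that the bound on $\int_{\T}h\cdot g_{\pi_j,\alpha_p}$ is uniform in $p$, which is guaranteed by Deligne's bound and the $C^3$ smoothness of $h$. In particular, the decisive feature is that the main term from the Riemann--von Mangoldt difference is of size $T$ (a constant multiple of $T$) and positive under $q_1>q_2$, so it overwhelms the $O(T\log p/\sqrt{p})$ contribution from the density functions once $p$ is large.
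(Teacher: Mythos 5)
Your proposal is correct and follows essentially the same route as the paper: both apply Theorem \ref{thm:densityfunction} with $\alpha=\frac{\log p}{2\pi}$, extract the main term $\frac{T}{\pi}\log(q_1/q_2)\int_{\T}h$ from the difference of zero-counting functions, and observe that the density-function contribution is $O_h(T\log p/\sqrt{p})$, which is too small to let $f_2$ win once $p$ is large. The only cosmetic difference is that you bound $\int_{\T}h\,g_{\pi_j,\alpha_p}$ via Parseval and Deligne's bound on $\Lambda_{\pi_j}(p^l)$, whereas the paper quotes its Lemma \ref{lem:truncation} for the same estimate.
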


%%%%
Corollary \ref{thm:distinctconductors} shows that it is rare for $\pi_1$ to win an $(\alpha,h)$ race against $\pi_2$ if $q_2>q_1$, but we can show this occurs infinitely often.
Our result can be stated neatly in terms of \emph{local races} rather than in terms of the $(\alpha, h)$-races described above.
For $t_0 \in \mathbb{T}$, we say that $f_1$ wins the local
$(\alpha, t_0)$-race against $f_2$ if there exists a neighborhood $U$ of $t_0 \in \mathbb{T}$ such that
the $(\alpha, h)$-race is won by $f_1$ for all $h \in \mathscr{C}^{3}(\T)$ which are supported on $U$.

\begin{corollary}
    \label{thm:exceptional}
For $j=1,2$, let $f_j  \in S_{k_j}^{\mathrm{new}} (\Gamma_0(q_j))$ be normalized holomorphic non-CM newforms with trivial nebentypus.  Suppose that $f_1\neq f_2\otimes\chi$ for all primitive Dirichlet characters $\chi$.  Fix $t_0 \in [0,1)$.  Let $q_1$ be sufficiently large, and let $q_2 \in (q_1, q_1 + q_1^{1/2}]$, and let $k_1$ and $k_2$ be fixed.  There exists $\alpha \in \R$ such that $f_1$ wins the local $(\alpha, t_0)$-race against $f_2$.
\end{corollary}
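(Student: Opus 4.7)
\emph{Proof proposal.}
The plan is to reduce the conclusion, via Theorem~\ref{thm:densityfunction}, to a single pointwise inequality between $g_{\pi_1,\alpha}$ and $g_{\pi_2,\alpha}$ at $t_0$, and then to construct such an $\alpha$ of the form $\frac{1}{Q}\cdot\frac{\log p}{2\pi}$ for a prime $p$ produced by joint Sato--Tate.

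Fix any $\alpha\in\R\setminus\{0\}$, so that \eqref{eq:condition_alpha} automatically holds with $C_\alpha=|\alpha|/e$. Applying Theorem~\ref{thm:densityfunction} to $\pi_1$ and $\pi_2$ and subtracting, together with the standard Riemann--von Mangoldt formula $N_{\pi_1}(T)-N_{\pi_2}(T)=\tfrac{T}{\pi}\log(q_1/q_2)+O_{\pi_1,\pi_2}(\log T)$, gives for every $h\in C^3(\T)$,
\[
\sum_{|\gamma_1|\le T}h(\alpha\gamma_1)-\sum_{|\gamma_2|\le T}h(\alpha\gamma_2)=T\Big(\int_\T h(t)\bigl(g_{\pi_1,\alpha}(t)-g_{\pi_2,\alpha}(t)\bigr)dt-\tfrac{1}{\pi}\log(q_2/q_1)\int_\T h\Big)+o(T).
\]
Hence $f_1$ wins the $(\alpha,h)$-race whenever $\int_\T h(g_{\pi_1,\alpha}-g_{\pi_2,\alpha})>\tfrac{1}{\pi}\log(q_2/q_1)\int_\T h$. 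Since each $g_{\pi_j,\alpha}$ is continuous on $\T$ (the series in \eqref{eq:densityfunction} converges uniformly by Deligne's bound), it suffices to find $\alpha\in\R\setminus\{0\}$ with $g_{\pi_1,\alpha}(t_0)-g_{\pi_2,\alpha}(t_0)>\tfrac{1}{\pi}\log(q_2/q_1)$; the same strict inequality then persists on a neighborhood $U$ of $t_0$, and integration against any $h\in\mathscr{C}^3(\T)$ supported in $U$ shows $f_1$ wins the local $(\alpha,t_0)$-race.

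Take $\alpha=\frac{1}{Q}\cdot\frac{\log p}{2\pi}$ for an integer $Q\geq 1$ and a prime $p\nmid q_1q_2$ to be chosen. Then \eqref{eqn:matrix_condition} is satisfied with $r=1$, matrix $(1)$, numerator $1$, denominator $Q$, and prime $p$, so \eqref{eq:densityfunction} becomes
\[
g_{\pi_j,\alpha}(t)=-\frac{2}{\pi}\,\operatorname{Re}\sum_{l=1}^{\infty}\frac{\Lambda_{\pi_j}(p^l)}{p^{l/2}}e^{-2\pi i Q l t}.
\]
Using $\Lambda_{\pi_j}(p)=\lambda_{f_j}(p)\log p$ and $|\Lambda_{\pi_j}(p^l)|\le 2\log p$ (Deligne) to bound the $l\ge 2$ tail by $O(\log p/p)$, I obtain
\[
g_{\pi_1,\alpha}(t_0)-g_{\pi_2,\alpha}(t_0)=-\frac{2\log p}{\pi\sqrt{p}}\bigl(\lambda_{f_1}(p)-\lambda_{f_2}(p)\bigr)\cos(2\pi Qt_0)+O\Big(\frac{\log p}{p}\Big).
\]
Choose $Q=Q(t_0)$ minimally so that $|\cos(2\pi Qt_0)|\ge c_0>0$; this is possible for every $t_0$ because $\cos(2\pi Qt_0)=0$ for only finitely many integers $Q$.

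Since $f_1,f_2$ are non-CM and $f_1\ne f_2\otimes\chi$ for every primitive Dirichlet character $\chi$, joint Sato--Tate holds for the pair: the points $(\lambda_{f_1}(p),\lambda_{f_2}(p))$ equidistribute on $[-2,2]^2$ with respect to $\mu_{\mathrm{ST},2}$ as $p$ varies over primes. (This follows from the automorphy of all symmetric powers of $f_1,f_2$ due to Newton--Thorne, combined with the Rankin--Selberg theory of convolutions $\operatorname{Sym}^if_1\boxtimes\operatorname{Sym}^jf_2$ and non-vanishing on $\operatorname{Re}(s)=1$.) The open set $\{(x,y)\in[-2,2]^2\colon(x-y)\cos(2\pi Qt_0)<-c_0\}$ has positive $\mu_{\mathrm{ST},2}$-measure, so primes $p$ with $(\lambda_{f_1}(p),\lambda_{f_2}(p))$ in this set have positive density. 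For any such prime,
\[
g_{\pi_1,\alpha}(t_0)-g_{\pi_2,\alpha}(t_0)>\frac{2c_0^2\log p}{\pi\sqrt{p}}+O\Big(\frac{\log p}{p}\Big).
\]
On the other hand, $q_2-q_1\le q_1^{1/2}$ gives $\tfrac{1}{\pi}\log(q_2/q_1)\le\tfrac{1}{\pi\sqrt{q_1}}$. Comparing, the desired inequality holds as soon as $\sqrt{p}\ll \sqrt{q_1}\log p$, which is satisfied for any prime $p\le q_1$ with $q_1$ large. Since $q_1$ is sufficiently large, joint Sato--Tate guarantees at least one such prime $p\le q_1$ coprime to $q_1q_2$, yielding the required $\alpha$.

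\emph{Main obstacle.} The sole non-elementary input is joint Sato--Tate for the pair $(f_1,f_2)$, which is needed in order to locate a prime $p\le q_1$ whose Hecke eigenvalues separate the two forms in the correct direction. A purely qualitative version is enough here given the freedom to take $q_1$ arbitrarily large; making the threshold on $q_1$ explicit would require an effective joint Sato--Tate statement in the spirit of Thorner's work. Everything else reduces to Theorem~\ref{thm:densityfunction}, a continuity argument, and Deligne's bound on the tail of \eqref{eq:densityfunction}.
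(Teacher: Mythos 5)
Your reduction to the pointwise inequality $g_{\pi_1,\alpha}(t_0)-g_{\pi_2,\alpha}(t_0)>\frac{1}{\pi}\log(q_2/q_1)$ and the truncation of \eqref{eq:densityfunction} to its first Fourier mode match the paper's argument, and your device of taking $\alpha=\frac{1}{Q}\frac{\log p}{2\pi}$ with $Q\in\{1,2\}$ chosen so that $\cos(2\pi Qt_0)\neq 0$ is a clean alternative to the paper's separate treatment of $t_0=\frac14,\frac34$ via the second Hecke eigenvalue $\lambda_f(p^2)$. (Your justification that ``$\cos(2\pi Qt_0)=0$ for only finitely many $Q$'' is false for $t_0=\frac14$, but $Q=2$ always works, so the claim you actually use is fine.)

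The final step, however, has a genuine gap, and it is precisely the point you dismiss in your ``main obstacle'' paragraph. You need a prime $p$ in the range $C(t_0)\le p\ll q_1(\log q_1)^2$ with $(\lambda_{f_1}(p)-\lambda_{f_2}(p))\cos(2\pi Qt_0)<-c_0$: the upper bound on $p$ is forced by the comparison $\frac{\log p}{\sqrt p}\gg\frac{1}{\sqrt{q_1}}$, so the prime must be found \emph{below a threshold determined by the level}. Qualitative joint Sato--Tate for the fixed pair $(f_1,f_2)$ only says that the density of such primes up to $X$ converges to a positive limit as $X\to\infty$, with a rate that may depend on $f_1,f_2$ --- hence on $q_1,q_2$ --- in an uncontrolled way; it does not produce a good prime below $q_1(\log q_1)^2$. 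Since $f_1,f_2$ vary with $q_1$, ``taking $q_1$ arbitrarily large'' does not rescue the argument: the window in which the prime must live grows with $q_1$ at exactly the same time as the forms change. This is why the paper invokes the effective joint Sato--Tate estimate (Theorem \ref{thm:effectiveST}), whose error term is explicit in $k_1,k_2,q_1,q_2$, applied on the range $p\in[q_1^{1/6},q_1^{1/4}]$, where the error is $o(1)$ as $q_1\to\infty$ while the main term $\mu_{\mathrm{ST}}(I_1)\mu_{\mathrm{ST}}(I_2)(\pi(X)-\pi(Y))$ is positive and bounded below. The effective input is not an optional refinement for ``making the threshold explicit''; it is the ingredient without which the proof does not close.
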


In addition to Theorem \ref{thm:densityfunction}, the proofs of these corollaries rely on a quantifiable understanding of the joint distribution of $\lambda_{f_1}(p)$ and $\lambda_{f_2}(p)$ as $p$ varies over the primes.    Such an understanding follows from the effective version of the Sato--Tate conjecture which counts the number of primes $p\leq X$ such that $(\lambda_{f_1}(p),\lambda_{f_2}(p))\in I_1\times I_2$ proved by the third author in \cite{Thorner} (see Theorem \ref{thm:effectiveST} below).  One can prove analogues of Corollaries \ref{cor:equalconductors}, \ref{thm:STracedist}, and \ref{thm:exceptional} for Dirichlet $L$-functions by replacing the effective Sato-Tate estimates with results on primes in arithmetic progressions.

%\subsection*{Acknowledgements}

%To be filled in later

%%%%%%%%%%%%%%%%%%%%%%%%%%%%%%%%%%%%%%%%%%%%%%%%%%%%%%%%%%%%%%%%%%%%%%%%%%%%%%%%%%%%%%%%%5
\section{Preliminaries}
\subsection{\texorpdfstring{$\operatorname{GL}_2$ $L$}{GL2 L}-functions over \texorpdfstring{$\mathbb{Q}$}{Q}}

Let $\pi\in\mathscr{A}$.  Here, we state the essential properties of $L$-functions of $L(s,\pi)$ that we use throughout our proofs.  See \cite[Chapter 5]{IK} for a convenient summary.  Given $\pi\in\mathscr{A}$ with level $q_{\pi}$, there exist suitable complex numbers $\alpha_{1,\pi}(p)$ and $\alpha_{2,\pi}(p)$ such that
\begin{equation*}
L(s,\pi) = \prod_{\textup{$p$ prime}}\prod_{j=1}^{2} ( 1-  \alpha_{j,\pi}(p) p^{-s} )^{-1} = \sum_{n=1}^{\infty}\frac{\lambda_{\pi}(n)}{n^s}.
\end{equation*}
The sum and product both converge absolutely for $\re(s)>1$.  There also exist spectral parameters $\kappa_{\pi}(1)$ and $\kappa_{\pi}(2)$ such that if we define
\[
L(s,\pi_{\infty})=\pi^{-s}\Gamma\Big(\frac{s+\kappa_{\pi}(1)}{2}\Big)\Gamma\Big(\frac{s+\kappa_{\pi}(2)}{2}\Big),
\]
then the completed $L$-function $\Lambda(s,\pi):=q_{\pi}^{s/2}L(s,\pi)L(s,\pi_{\infty})$ is entire of order 1.

Let $\tilde{\pi}\in\mathscr{A}$ be the contragredient representation.  We have $\alpha_{j,\tilde{\pi}}(p)=\overline{\alpha_{j,\pi}(p)}$ and $\kappa_{\tilde{\pi}}(j)=\overline{\kappa_{\pi}(j)}$ for $j=1,2$.    Moreover, there exists a complex number $W(\pi)$ of modulus 1 such that for all $s\in\mathbb{C}$, we have
\[
\Lambda(s,\pi)=W(\pi)\Lambda(1-s,\tilde{\pi}).
\]
Building on work of Kim and Sarnak \cite[Appendix]{KimSarnak}, Blomer and Brumley \cite{BB} proved that there exists $\theta\in[0,7/64]$ such that we have the uniform bounds
\begin{equation}
    \label{eqn:GRC}
    \log_p | \alpha_{j,\pi} (p)|,~-\re(\kappa_{\pi}(j))\leq \theta.
\end{equation}
The generalized Ramanujan conjecture and the Selberg eigenvalue conjecture assert that \eqref{eqn:GRC} holds with $\theta=0$.

The Rankin--Selberg $L$-function\footnote{The $\doteq$ suppresses the more complicated Euler factors at primes $p|q_{\pi}$. Their explicit description does not arise in our proofs.}
\[
L(s,\pi\otimes\tilde{\pi})=\sum_{n=1}^{\infty}\frac{\lambda_{\pi\times\tilde{\pi}}(n)}{n^s}\doteq \prod_{p\nmid q_{\pi}}\prod_{j=1}^{2}\prod_{j'=1}^{2}(1-\alpha_{j,\pi}(p)\overline{\alpha_{j',\pi}(p)}p^{-s})^{-1},\qquad \re(s)>1
\]
factors as $\zeta(s)L(s,\mathrm{Ad}^2\pi)$, where the adjoint square lift $\mathrm{Ad}^2\pi$ is an automorphic representation of $\GL_3(\mathbb{A}_{\Q})$.  Thus, $L(s,\mathrm{Ad}^2\pi)$ is an entire automorphic $L$-function.  This fact and the bound $|\lambda_{\pi}(n)|^2\leq \lambda_{\pi\times\tilde{\pi}}(n)$ \cite[Lemma 3.1]{JLW}, enable us to prove via contour integration that
\begin{equation}
\label{eqn:RS_ramanujan_average}
\sum_{n\leq X}|\lambda_{\pi}(n)|^2\leq \sum_{n\leq X}\lambda_{\pi\times\tilde{\pi}}(n)\ll_{\pi}X.
\end{equation}
It follows from \cite[Theorem 5.42]{IK} applied to $\zeta(s)$ and $L(s,\mathrm{Ad}^2\pi)$ that there exists an effectively computable constant $c_{\pi}>0$ such that $L(s,\pi\times\tilde{\pi})\neq 0$ in the region
\begin{equation}
    \label{eqn:ZFR_RS}
    \re(s)\geq 1- \frac{c_{\pi}}{\log(|\im(s)|+3)}.
\end{equation}

%%%%%%%%%%%%%%%
\subsection{Holomorphic newforms}
\label{sec:holomorphic_newforms}

Many of our corollaries pertain specifically to $\pi\in\mathscr{A}$ corresponding to holomorphic newforms.  As above, let
\[
    f(z) = \sum_{n=1}^{\infty}\lambda_f(n)n^{\frac{k-1}{2}}e^{2\pi inz}\in S_k^{\mathrm{new}}(\Gamma_0(q))
\]
be a holomorphic cuspidal newform (normalized so that $\lambda_f(1)=1$) of even integral weight $k\geq 2$, level $q\geq 1$, and trivial nebentypus.  If $\pi_f\in\mathscr{A}$ corresponds with $f$, then $L(s,f)=L(s,\pi_f)$, $\lambda_f(n)=\lambda_{\pi_f}(n)$, etc.

For these newforms, it follows from Deligne's proof of the Weil conjectures that the generalized Ramanujan conjecture holds.  Thus, for $f$ a holomorphic cuspidal newform as above, we may take $\theta=0$ in \eqref{eqn:GRC}.  Since we assume that $f$ has trivial central character, Deligne's bound implies that there exists $\theta_p\in[0,\pi]$ such that
\[
    \lambda_f(p)=2\cos\theta_p.
\]
The Sato--Tate conjecture, now a theorem due to Barnet-Lamb, Geraghty, Harris, and Taylor \cite{BLGHT}, states that the sequence $(\theta_p)$ is equidistributed in the interval $[-2,2]$ with respect to the measure $\frac{2}{\pi}(\sin t)^2 dt$.  In other words, if $I\subseteq[0,\pi]$ is a subinterval, then
\[
    \lim_{X\to\infty}\frac{\#\{p\leq X\colon \theta_p\in I\}}{\#\{p\leq X\}}=\frac{2}{\pi}\int_{I}(\sin t)^2 dt,\qquad \pi(X):=\#\{\textup{$p$ prime: }p\leq X\}.
\]
After a change of variables, this implies that for any interval $I \subseteq [-2,2]$, we have
\begin{equation}
    \label{eqn:Sato_Tate_1}
    \lim_{X \to \infty} \frac{ \# \{ \lambda_f(p) \in I: p \le X \}}{\#\{p\leq X\}} = \frac{1}{2\pi} \int_I \sqrt{4-t^2} dt =: \mu_{\mathrm{ST}}(I).
\end{equation}
A recent paper by Thorner \cite{Thorner} provides both an unconditional and a GRH-conditional rate of convergence in \eqref{eqn:Sato_Tate_1}.

Our corollaries of Theorem \ref{thm:densityfunction} require a natural refinement of the Sato--Tate conjecture.  For $j=1,2$, let $f_j\in S_{k_j}^{\mathrm{new}}(\Gamma_0(q_j))$ be a holomorphic cuspidal newform as above.  Suppose that $f_1\neq f_2\otimes\chi$ for all primitive nontrivial Dirichlet characters $\chi$.  Building on work of Harris \cite{Harris_2}, Wong \cite{Wong_2} proved that the sequences $(\lambda_{f_1}(p))$ and $(\lambda_{f_2}(p))$ exhibit a joint distribution:  If $I_1,I_2\subseteq [-2,2]$, then
\begin{equation}
\label{eqn:ST_2}
\lim_{X\to\infty}\frac{\#\{p\leq X\colon \lambda_{f_1}(p)\in I_1,~\lambda_{f_2}(p)\in I_2\}}{\#\{p\leq X\}}=\mu_{\mathrm{ST}}(I_1)\mu_{\mathrm{ST}}(I_2).
\end{equation}
Our corollaries of Theorem \ref{thm:densityfunction} require a nontrivial unconditional bound on the rate of convergence in \eqref{eqn:ST_2}.  Such a bound was recently proved in \cite[Theorem 1.2]{Thorner}.

\begin{theorem}
\label{thm:effectiveST}
    For $j=1,2$, let $f_j \in S_{k_j}^{\mathrm{new}}(\Gamma_0(q_j))$ be a normalized holomorphic cuspidal newform with trivial nebentypus.   Suppose that $f_1\neq f_2\otimes\chi$ for all primitive nontrivial Dirichlet character $\chi$.  Let $I_1, I_2\subseteq [-2, 2]$ be subintervals.  There exists an absolute and effectively computable constant $c>0$ such that
    \[
     \Big|\frac{\# \{ p \le X\colon \lambda_{f_1, p} \in I_1, \lambda_{f_2}(p) \in I_2 \}}{\#\{p\leq X\}} - \mu_{\mathrm{ST}}(I_1) \mu_{\mathrm{ST}}(I_2)\Big|\leq c\frac{\log \log(k_1 k_2 q_1 q_2 \log X)}{\sqrt{\log \log X}}.
    \]
\end{theorem}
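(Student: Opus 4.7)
The plan is to expand the joint indicator function against the Sato--Tate orthonormal system, reduce the resulting cross terms to sums over primes of Hecke eigenvalues of Rankin--Selberg convolutions of symmetric powers, and then invoke an effective prime number theorem for those $L$-functions. Concretely, I would parametrize $\lambda_{f_j}(p) = 2\cos\theta_{j,p}$ with $\theta_{j,p} \in [0,\pi]$ and use the identity $U_m(\lambda_{f_j}(p)/2) = \lambda_{\mathrm{Sym}^m f_j}(p)$, where $U_m$ is the Chebyshev polynomial of the second kind. The family $\{U_m(x/2)\}_{m \geq 0}$ is orthonormal on $[-2,2]$ with respect to $d\mu_{\mathrm{ST}}$, and by the work of Newton--Thorne, each $\mathrm{Sym}^m f_j$ is a cuspidal automorphic representation of $\GL_{m+1}$ over $\Q$, so $\lambda_{\mathrm{Sym}^m f_j}(p)$ is a genuine Hecke eigenvalue of an automorphic $L$-function.

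Given an integer parameter $M$ to be chosen at the end, I would construct Beurling--Selberg / Vaaler polynomial approximants $F_{j,M}^{\pm}$ of degree $\leq M$ with $F_{j,M}^{-} \leq \mathbf{1}_{I_j} \leq F_{j,M}^{+}$ and $\int_{-2}^{2}(F_{j,M}^{+} - F_{j,M}^{-})\, d\mu_{\mathrm{ST}} \ll 1/M$. The target count is then sandwiched by sums of the form $\sum_{p \leq X} F_{1,M}^{\pm}(\lambda_{f_1}(p)) F_{2,M}^{\mp}(\lambda_{f_2}(p))$, and expanding in the Chebyshev basis produces the expected main term $\mu_{\mathrm{ST}}(I_1)\mu_{\mathrm{ST}}(I_2)\pi(X)$ plus cross sums
\[
S_{m,n}(X) := \sum_{p \leq X} \lambda_{\mathrm{Sym}^m f_1}(p)\, \lambda_{\mathrm{Sym}^n f_2}(p), \qquad (m,n) \in \{0,\ldots,M\}^2 \setminus \{(0,0)\}.
\]
The non-twist hypothesis $f_1 \neq f_2 \otimes \chi$, combined with the classification of self-twists of symmetric power lifts, guarantees that $L(s, \mathrm{Sym}^m f_1 \times \mathrm{Sym}^n f_2)$ is entire for every such $(m,n)$, and its analytic conductor is bounded by $(k_1 q_1)^{O(m)}(k_2 q_2)^{O(n)}$. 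Applying an effective prime number theorem for this Rankin--Selberg family yields cancellation in each $S_{m,n}(X)$ uniform up to $m,n \leq M$; balancing the Beurling--Selberg error $O(1/M)$ against the accumulated cross-term errors---with $M$ roughly of size $\sqrt{\log\log X}$ up to $\log\log$ corrections in the conductors---produces the stated bound.

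The main obstacle will be securing an effective Rankin--Selberg prime number theorem for the full family $\{L(s, \mathrm{Sym}^m f_1 \times \mathrm{Sym}^n f_2)\}_{0 \leq m,n \leq M}$ whose error is simultaneously quantitative in $X$ and uniform in the growing analytic conductors. One must, in particular, jointly rule out or bound the effect of Landau--Siegel exceptional real zeros across this whole family of degree-$(m+1)(n+1)$ Rankin--Selberg $L$-functions, and this cannot be handled one $L$-function at a time because the conductor-dependence in a classical zero-free region would overwhelm the $M^2$ cross terms. The log-free zero density estimate for Rankin--Selberg $L$-functions established in \cite{Thorner} is exactly the input that accomplishes this uniformly, and its strength is what ultimately dictates the $(\log\log X)^{-1/2}$ savings in the final rate of convergence.
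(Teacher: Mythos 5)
The first thing to note is that the paper does not prove this statement at all: Theorem \ref{thm:effectiveST} is quoted verbatim from \cite{Thorner} (``Such a bound was recently proved in [Theorem 1.2]''), so there is no internal argument to compare yours against. Judged against the proof in that reference, your sketch is a faithful high-level outline of the actual strategy: Beurling--Selberg/Vaaler approximants, expansion in the Chebyshev system $U_m(x/2)$ so that the cross terms become $\sum_{p\le X}\lambda_{\mathrm{Sym}^m f_1}(p)\lambda_{\mathrm{Sym}^n f_2}(p)$, automorphy of the symmetric powers via Newton--Thorne, non-vanishing/entirety of $L(s,\mathrm{Sym}^m f_1\times\mathrm{Sym}^n f_2)$ from the no-twist hypothesis, and an effective prime number theorem for that family uniform in $(m,n)$, with the truncation $M$ of size roughly $\sqrt{\log\log X}$ forcing the weak final rate. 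Note that your route silently requires both forms to be non-CM (Newton--Thorne, and indeed the product Sato--Tate measure itself); the statement as printed omits this hypothesis, though every corollary in the paper restores it.

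Two substantive caveats. First, the two-dimensional sandwiching is not simply $F_1^{-}F_2^{-}\le \mathbf{1}_{I_1}\mathbf{1}_{I_2}\le F_1^{+}F_2^{+}$: Selberg minorants of indicators take negative values, so the product of two minorants need not minorize the product of indicators. One must either arrange nonnegative minorants or use an inequality such as $\mathbf{1}_{I_1}\mathbf{1}_{I_2}\ge F_1^{+}F_2^{+}-F_1^{+}(1-F_2^{-})-F_2^{+}(1-F_1^{-})$ and control the extra terms; this is routine but cannot be skipped. Second, essentially all of the difficulty of the cited theorem lives inside the step you label as the ``main obstacle'': unconditionally one has only a very narrow standard zero-free region for $L(s,\mathrm{Sym}^m f_1\times\mathrm{Sym}^n f_2)$ whose quality deteriorates rapidly in the degree, plus possible real exceptional zeros for the self-dual members, and making the explicit formula usable uniformly across the whole family is precisely the technical content of \cite{Thorner}. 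You correctly name the log-free zero density estimate as the device that accomplishes this, but at that point your argument is invoking the engine of the very paper whose final theorem is being quoted here; as a self-contained proof the proposal therefore has a large outsourced component, while as a reduction to the literature it is accurate and identifies the right bottleneck, including why the convergence rate is only $(\log\log X)^{-1/2}$.
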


We use the following result to prove our corollaries of Theorem \ref{thm:densityfunction}.

\begin{corollary}\label{cor:STlimit}
    Let  $f_1$ and $f_2$ be as in Theorem \ref{thm:effectiveST}.  Recall the definition of $\nu$ in \ref{eq:nu_measure}.  If $\mathcal{I}  \subseteq[-4,4]$, then
    \[
    \frac{\# \{p \le X: \lambda_{f_1}(p) - \lambda_{f_2}(p) \in \mathcal{I} \}}{\#\{p\leq X\}}=\nu(\mathcal{I})+O\Big(\frac{\sqrt{\log\log\log X}}{(\log\log X)^{1/4}}\Big).
    \]
    with an implied constant independent of $\mathcal{I}$.
\end{corollary}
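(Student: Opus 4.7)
The plan is to slice the problem by values of $\lambda_{f_1}(p)$, rewrite the condition $\lambda_{f_1}(p)-\lambda_{f_2}(p)\in\mathcal{I}$ as a condition on $\lambda_{f_2}(p)$ alone, and then apply Theorem \ref{thm:effectiveST} in each slice. To that end, partition $[-2,2]$ into $N$ consecutive intervals $J_1,\dots,J_N$ of length $4/N$ with midpoints $c_j$. If $\lambda_{f_1}(p)\in J_j$, then writing $\lambda_{f_1}(p)=c_j+\zeta$ with $|\zeta|\leq 2/N$ yields the equivalence
$\lambda_{f_1}(p)-\lambda_{f_2}(p)\in\mathcal{I}\iff \lambda_{f_2}(p)\in c_j-\mathcal{I}+\zeta.$

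Write $A^{+\delta}$ (resp.\ $A^{-\delta}$) for the $\delta$-expansion (resp.\ contraction) of $A\subseteq\mathbb{R}$, intersected with $[-2,2]$. The event above is then sandwiched between the inclusions $\lambda_{f_2}(p)\in(c_j-\mathcal{I})^{-2/N}$ and $\lambda_{f_2}(p)\in(c_j-\mathcal{I})^{+2/N}$; both sets are intervals since $\mathcal{I}$ is. Apply Theorem \ref{thm:effectiveST} to each of the $2N$ rectangles $J_j\times(c_j-\mathcal{I})^{\pm 2/N}$. Using the absolute bound on the Sato--Tate density ($\leq 1/\pi$) to replace $\mu_{\mathrm{ST}}((c_j-\mathcal{I})^{\pm 2/N})$ by $\mu_{\mathrm{ST}}(c_j-\mathcal{I})$ at a cost of $O(1/N)$ per term, and summing over $j$, the ratio in question becomes
$\sum_{j=1}^N\mu_{\mathrm{ST}}(J_j)\mu_{\mathrm{ST}}(c_j-\mathcal{I})+O\Big(\tfrac{1}{N}+\tfrac{N\log\log(k_1k_2q_1q_2\log X)}{\sqrt{\log\log X}}\Big).$

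The sum on the left is a midpoint Riemann sum for $\int_{-2}^{2}\mu_{\mathrm{ST}}(x-\mathcal{I})\,d\mu_{\mathrm{ST}}(x)$, which equals $\nu(\mathcal{I})$ by Fubini. The same density bound shows that the Riemann sum approximates the integral to within $O(1/N)$. Choosing $N\asymp(\log\log X)^{1/4}/(\log\log\log X)^{1/2}$ balances the two dominant error sources and produces a total error of size $O\!\left(\sqrt{\log\log\log X}/(\log\log X)^{1/4}\right)$, matching the claim.

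There is no substantive obstacle; the only point worth checking is uniformity in $\mathcal{I}$, i.e.\ that every $O$-constant above is independent of the interval $\mathcal{I}$. This is automatic: the Sato--Tate density bound is absolute so the perturbation and Riemann-sum errors are uniform, and the constant $c$ in Theorem \ref{thm:effectiveST} does not depend on the target intervals $I_1,I_2$.
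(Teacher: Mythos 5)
Your proposal is correct and is essentially the paper's own argument: both discretize the first coordinate into $\asymp N$ pieces, sandwich the strip $\{(x,y):x-y\in\mathcal{I}\}$ between inner and outer unions of rectangles, apply Theorem \ref{thm:effectiveST} to each rectangle, and balance the $O(1/N)$ geometric error against the $O(N\log\log\log X/\sqrt{\log\log X})$ accumulated Sato--Tate error with the same choice of $N$. The only cosmetic difference is that you evaluate the main term as a midpoint Riemann sum for $\int\mu_{\mathrm{ST}}(x-\mathcal{I})\,d\mu_{\mathrm{ST}}(x)$ via Fubini, whereas the paper bounds $\mu_{\mathrm{ST},2}$ of the symmetric difference between the rectangle union and the strip.
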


\begin{proof}
    Let
    \[
    g(X) := \frac{(\log\log X)^{1/4}}{\sqrt{\log\log\log X}},\qquad S := \{(x,y) \in [-2,2]^2: x-y \in \mathcal{I} \}.
    \]
    We will define rectangles whose unions approximate $S$.
    Let $x_j = -2 + j \lfloor \frac{4}{g(X)} \rfloor$.
    If $\mathcal{I} = ( c, d)$, set $R_j = [x_j, x_{j+1}] \times [x_j - d, x_{j+1} - c]$,
    and similarly $T_j := [x_j, x_{j+1}] \times [x_{j+1} - d, x_j - c]$.  By construction, we have $\cup_j T_j \subseteq S \subseteq \cup_j R_j$, which implies that
    \begin{align}\label{eq:squeeze}
    \# \{ p \le X: (\lambda_{f_1} (p), \lambda_{f_2}(p)) \in \cup T_j \} &\le \# \{ p \le X: (\lambda_{f_1} (p), \lambda_{f_2}(p)) \in S \} \notag\\
    & \le  \# \{ p \le X: (\lambda_{f_1} (p), \lambda_{f_2}(p)) \in \cup_j R_j \}
\end{align}

    We apply Theorem \ref{thm:effectiveST} to count the primes $p$ with $(\lambda_{f_1} (p), \lambda_{f_2}(p))$
    in $\cup_j R_j$.
    \begin{align*}
        \frac{\# \{ p \le X: (\lambda_{f_1} (p), \lambda_{f_2}(p)) \in \cup_j R_j \}}{\#\{p\leq X\}} & = \sum_{j = 1}^{g(X)} \Big( \mu_{ST, 2}(R_j)
        + O \Big( \frac{\log \log \log X}{\sqrt{\log \log X}} \Big) \Big) \\
        & = \mu_{ST, 2} (\cup_j R_j) + O\Big(g(X)  \frac{\log \log \log X}{\sqrt{\log \log X}}\Big).
    \end{align*}

      Since the area of $\cup_j R_j \setminus S$ is at most $g(X) \lfloor \frac{4}{g(X)} \rfloor^2$, we have
    \[
        \mu_{ST, 2}(\cup_j R_j) - \mu_{ST, 2}(S) = \mu_{ST, 2}( (\cup_j R_j ) \setminus S) = O(g(X)^{-1}),
    \]
    and we conclude that
    \begin{equation}
    \label{eqn:ST_applied}
    \begin{aligned}
        &\frac{\# \{ p \le X: (\lambda_{f_1} (p), \lambda_{f_2}(p)) \in \cup_j R_j \}}{\#\{p\leq X\}} \\
        &= \nu(\mathcal{I}) + O \Big( g(X)^{-1} +  g(X) \frac{\log \log \log X}{\sqrt{\log \log X}} \Big)= \nu(\mathcal{I})  + O\Big(\frac{\sqrt{\log\log\log X}}{(\log\log X)^{1/4}}\Big).
    \end{aligned}
    \end{equation}
    The same argument shows that \eqref{eqn:ST_applied} holds with $R_j$ replaced with $T_j$ on the left hand side.  The lemma now follows from (\ref{eq:squeeze}).
    \end{proof}

Finally, we require a refinement of \eqref{eq:number_of_zeros}, namely
\begin{equation}\label{eq:zerocount}
    N_f(T) = T\log\Big(q_f \Big(\frac{T}{2\pi e}\Big)^2\Big)+O(\log(k_f q_f T)).
\end{equation}
This is \cite[Theorem 5.8]{IK} applied to $L(s,f)$.

\section{Proof of Theorem \ref{thm:ZDE}}

Let $\pi\in\mathscr{A}$.  We detect the zeros of $L(s,\pi)$ by estimating a mollified second moment of $L(s,\pi)$ near the line $\re(s)=\frac{1}{2}$.  To describe our mollifier, we define $\mu_{\pi}(n)$ by the convolution identity
\begin{equation}
\label{eqn:mob_def}
\sum_{d|n}\mu_{\pi}(d)\lambda_{\pi}(n/d)=\begin{cases}
    1&\mbox{if $n=1$,}\\
    0&\mbox{if $n>1$}
\end{cases}
\end{equation}
so that
\[
\frac{1}{L(s,\pi)}=\sum_{n=1}^{\infty}\frac{\mu_{\pi}(n)}{n^s},\qquad \mathrm{Re}(s)>1.
\]
Let $T>0$ be a large parameter, and let $0<\varpi<\frac{1}{4}$.  Define $P(t)$ by
\begin{equation}
\label{eqn:P(t)_def}
    P(t) = \begin{cases}
        1&\mbox{if $0\leq t\leq T^{\varpi/2}$,}\\
      2(1-\frac{\log t}{\log T^{\varpi}})&\mbox{if $T^{\varpi/2}<t\leq T^{\varpi}$,}\\
        0&\mbox{if $t>T^{\varpi}$.}
        \end{cases}
\end{equation}
Our mollifier is
\begin{equation}
\label{eqn:mollifier_def}
M_{\pi}(s, T^{\varpi}) = \sum_{n\leq T^{\varpi}}\frac{\mu_{\pi}(n)}{n^{s}}P(n).
\end{equation}
As a proxy for detecting zeros of $L(s,\pi)$ near $\mathrm{Re}(s)=\frac{1}{2}$, where $L(s,\pi)$ oscillates wildly, we detect zeros of the mollified $L$-function $L(s,\pi)M_{\pi}(s, T^{\varpi})$ near $\mathrm{Re}(s)=\frac{1}{2}$.  To this end, we let $w:\R\to\R$ be an infinitely differentiable function whose support is a compact subset of $[T/4,2T]$ and whose $j$-th derivative satisfies $|w^{(j)}(t)|\ll_{w,j}((\log T)/T)^j$ for all $j\geq 0$.  Also, let $w(t)=1$ for $t\in[T/2,T]$.  We will estimate
\[
I_f(\alpha,\beta)=\int_{-\infty}^{\infty}w(t)L(\tfrac{1}{2}+\alpha+it,\pi)L(\tfrac{1}{2}+\beta-it,\tilde{\pi} )|M_{\pi}(\tfrac{1}{2}+\tfrac{1}{\log T}+it,T^{\varpi})|^2 dt,
\]
eventually choosing $\alpha$ and $\beta$ to equal $1/\log T$.  Define
\begin{align*}
G(s) &= e^{s^2}\frac{(\alpha+\beta)^2-(2s)^2}{(\alpha+\beta)^2},    \\
g_{\alpha,\beta}(s,t)&=\frac{L(\frac{1}{2}+\alpha+s+it,\pi_{\infty})L(\frac{1}{2}+\beta+s-it,\tilde{\pi}_{\infty})}{L(\frac{1}{2}+\alpha+it,\pi_{\infty})L(\frac{1}{2}+\beta-it,\tilde{\pi}_{\infty})}\\
V_{\alpha,\beta}(s,t)&=\frac{1}{2\pi i}\int_{\re(s)=1}\frac{G(s)}{s}g_{\alpha,\beta}(s,t)x^{-s}ds,\\
X_{\alpha,\beta}(t)&=\frac{L(\frac{1}{2}-\alpha-it,\pi_{\infty})L(\frac{1}{2}-\beta+s+it,\tilde{\pi}_{\infty})}{L(\frac{1}{2}+\alpha+it,\pi_{\infty})L(\frac{1}{2}+\beta-it,\tilde{\pi}_{\infty})}.
\end{align*}
As in \cite{Bernard,AT}, it follows from the approximate functional equation (see \cite[Section 5.2]{IK} also) that
\[
I_f(\alpha,\beta)=\sum_{a,b\leq T^{\varpi}}\frac{\mu_{\pi}(a)\overline{\mu_{\pi}(b)}}{\sqrt{ab}}\frac{P(a)P(b)}{(ab)^{\frac{1}{\log T}}}(D_{a,b}^{+}(\alpha,\beta)+D_{a,b}^{-}(\alpha,\beta)+N_{a,b}^{+}(\alpha,\beta)+N_{a,b}^{-}(\alpha,\beta)),
\]
where we have split the sum into diagonal terms
\begin{align*}
    D_{a,b}^{+}(\alpha,\beta) &= \sum_{am=bn} \frac{\lambda_{\pi}(m)\overline{\lambda_{\pi}(n)}}{m^{\frac{1}{2}+ \alpha}n^{\frac{1}{2}+\beta}} \int_{-\infty}^{\infty} w(t)  V_{\alpha,\beta}(mn,t) \, dt, \\
    D_{a,b}^{-}(\alpha,\beta) &= \sum_{am=bn} \frac{\lambda_{\pi}(m)\overline{\lambda_{\pi}(n)}}{m^{\frac{1}{2}- \beta}n^{\frac{1}{2}-\alpha}} \int_{-\infty}^{\infty} w(t)  X_{\alpha,\beta}(t) V_{-\beta,-\alpha}(mn,t) \, dt
\end{align*}
and off-diagonal terms
\begin{align*}
    N_{a,b}^{+}(\alpha,\beta) &= \sum_{am\neq bn} \frac{\lambda_{\pi}(m)\overline{\lambda_{\pi}(n)}}{m^{\frac{1}{2}+ \alpha}n^{\frac{1}{2}+\beta}} \int_{-\infty}^{\infty} w(t) \pfrac{bn}{am}^{it} V_{\alpha,\beta}(mn,t) \, dt, \\
    N_{a,b}^{-}(\alpha,\beta) &= \sum_{am\neq bn} \frac{\lambda_{\pi}(m)\overline{\lambda_{\pi}(n)}}{m^{\frac{1}{2}- \beta}n^{\frac{1}{2}-\alpha}} \int_{-\infty}^{\infty} w(t) \pfrac{bn}{am}^{it} X_{\alpha,\beta}(t) V_{-\beta,-\alpha}(mn,t) \, dt.
\end{align*}

\begin{lemma}
\label{lem:off_diag}
    Let $\epsilon>0$.  If $\alpha,\beta\in\mathbb{C}$ satisfy $|\alpha|,|\beta|\ll \frac{1}{\log T}$ and $|\alpha+\beta|\gg\frac{1}{\log T}$, then for any integers $a,b\geq 1$, we have that $N_{a,b}^{\pm}(\alpha,\beta)\ll_{\varepsilon}(ab)^{\frac{1}{2}}T^{\frac{1}{2}+\theta}(abT)^{\varepsilon}$.
\end{lemma}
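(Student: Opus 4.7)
The plan is to extract cancellation from the oscillatory factor $(bn/am)^{it}$ in two stages: first via integration by parts on the $t$-integral to force $am$ and $bn$ to lie within distance $\sqrt{ab}(\log T)^{O(1)}$ of each other, and then via the $\GL_2$ Voronoi summation formula on $m$ to control the resulting shifted-convolution sum in $\lambda_\pi$.

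First, I would analyze the $t$-integral for fixed $m,n$ with $am \neq bn$. Stirling's formula for the ratio of gamma factors defining $g_{\alpha,\beta}(s,t)$ gives, for $t \asymp T$, that $\partial_t^j V_{\alpha,\beta}(mn,t) \ll_j T^{\epsilon-j}$ on $\re(s)=1$ and that $V_{\alpha,\beta}(mn, t)$ decays super-polynomially for $mn \gg T^{2+\epsilon}$. Combined with $w^{(j)}(t) \ll_j (\log T)^j/T^j$, repeated integration by parts in $t$ yields
\[
\int w(t)\Bigl(\frac{bn}{am}\Bigr)^{it} V_{\alpha,\beta}(mn, t)\,dt \ll_A T\bigl(1 + T|\log(bn/am)|/(\log T)^2\bigr)^{-A}.
\]
Thus the effective contribution comes from pairs with $mn \ll T^{2+\epsilon}$ and $|\log(bn/am)| \ll (\log T)^2/T$. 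In that regime $am \asymp bn \asymp \sqrt{ab\,mn} \ll T\sqrt{ab}$, so setting $h := am - bn \neq 0$, the effective shifts satisfy $|h| \ll \sqrt{ab}(\log T)^2 T^\epsilon$.

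Next, I would sort the remaining terms by $h$ (noting $\gcd(a,b) \mid h$ is forced for solubility) and apply the $\GL_2$ Voronoi summation formula for $\lambda_\pi$ to the $m$-sum, where $m$ runs through the arithmetic progression $am \equiv h \pmod{b}$. The dual side is a short sum of Bessel-transformed $\lambda_\pi(m^*)$ coefficients twisted by Kloosterman sums of modulus dividing $b/\gcd(a,b)$, which I would bound via Weil's estimate together with the Kim--Sarnak pointwise bound $|\lambda_\pi(m^*)| \ll (m^*)^\theta \tau(m^*)$ coming from \eqref{eqn:GRC}. The remaining smoothly weighted $n$-sum is then controlled by the Rankin--Selberg mean-square estimate \eqref{eqn:RS_ramanujan_average}. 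This produces a square-root saving from Voronoi combined with the pointwise factor $T^\theta$ from Kim--Sarnak.

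Assembling the contributions from the $\ll \sqrt{ab}(\log T)^{O(1)}$ admissible shifts $h$, together with the prefactor $T$ from the $t$-integral, yields the claimed bound $(ab)^{1/2}T^{1/2+\theta}(abT)^\epsilon$. The main obstacle is the Voronoi bookkeeping: tracking the Kloosterman sums and Bessel transforms arising at modulus $b/\gcd(a,b)$, and verifying that the Bessel transform of the smooth $(m,n)$-weight produced in the first step is effectively supported on a dual range short enough to deliver the above estimates. The factor $T^\theta$ enters precisely where the Kim--Sarnak bound is invoked; under the generalized Ramanujan conjecture one would instead recover the cleaner bound $(ab)^{1/2}T^{1/2+\epsilon}$, which is what ultimately makes the constant $\frac{1}{4}-\frac{\theta}{2}$ in Theorem \ref{thm:ZDE} the best we can prove at present.
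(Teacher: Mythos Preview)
The paper does not argue this lemma at all; its entire proof is the sentence ``This is \cite[Proposition 3.4]{AT}.'' Your first stage---integrating by parts in $t$ to force $|am-bn|\ll\sqrt{ab}\,(\log T)^{O(1)}$ and truncating to $mn\ll T^{2+\varepsilon}$---is correct and is exactly the reduction carried out in the cited reference. Your identification of where the factor $T^{\theta}$ enters (a pointwise invocation of \eqref{eqn:GRC} on the dual side) is also right in spirit.

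The second stage, however, is not quite as you describe it. For a fixed shift $h$, the constraint $am-bn=h$ determines $n$ as a function of $m$, so the inner sum is of the shape
\[
\sum_{\substack{m\equiv r\ (\mathrm{mod}\ b/\!\gcd(a,b))}}\lambda_{\pi}(m)\,\overline{\lambda_{\pi}\!\Big(\tfrac{am-h}{b}\Big)}\,W(m),
\]
and the factor $\overline{\lambda_{\pi}((am-h)/b)}$ is arithmetic in $m$, not smooth. A single Voronoi in $m$ therefore does not apply, and there is no separate ``remaining smoothly weighted $n$-sum'' to which Rankin--Selberg can be applied afterward. What the reference actually does (following the standard treatments of Blomer, Harcos, Jutila, and others on which \cite{AT} builds) is first to \emph{decouple} $m$ and $n$---via Jutila's circle method or the $\delta$-symbol---and only then apply Voronoi in each variable; the Kloosterman sums arise from combining the two dual transformations, not from a single Voronoi modulo $b/\gcd(a,b)$. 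This decoupling step is the genuine technical content being cited, and your sketch skips over it. The endpoint bound and the role of $\theta$ are as you say, but the mechanism producing the Kloosterman sums is different from what you wrote.
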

\begin{proof}
    This is \cite[Proposition 3.4]{AT}.
\end{proof}

\begin{corollary}
    \label{cor:off_diagonal}
    Fix $0< \varpi< \frac{1}{4}-\frac{\theta}{2}$.  There exists a constant $\delta>0$ such that
    \[
    \Big|\sum_{a,b\leq T^{\varpi}}\frac{\mu_{\pi}(a)\overline{\mu_{\pi}(b)}}{\sqrt{ab}}\frac{P(a)P(b)}{(ab)^{\frac{1}{\log T}}}(N_{a,b}^{+}(\alpha,\beta)+N_{a,b}^{-}(\alpha,\beta))\Big|\ll T^{1-\delta}.
    \]
\end{corollary}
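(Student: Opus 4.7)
The plan is to insert the pointwise estimate from Lemma \ref{lem:off_diag} directly into the sum, observe two elementary cancellations, and then reduce to an essentially sharp mean bound for $|\mu_\pi(n)|$ that pushes the final exponent of $T$ strictly below $1$ under the hypothesis $\varpi<\frac{1}{4}-\frac{\theta}{2}$.

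First I would note that the factor $(ab)^{1/2}$ produced by Lemma \ref{lem:off_diag} cancels exactly against the $\sqrt{ab}$ in the denominator of the coefficient, while $P(a)P(b)(ab)^{-1/\log T}$ is uniformly $O(1)$ on the range $a,b\le T^\varpi$ (since $0\le P\le 1$ and $1\le(ab)^{1/\log T}\le e^{2\varpi}$). Applying Lemma \ref{lem:off_diag} with a small $\varepsilon>0$, the left side of the claim is therefore bounded by
\[
\ll_\varepsilon T^{\frac12+\theta+\varepsilon}\Big(\sum_{a\le T^\varpi}|\mu_\pi(a)|\,a^\varepsilon\Big)^{\!2}.
\]

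Next I would establish the essentially sharp bound $\sum_{n\le X}|\mu_\pi(n)|\ll_{\pi,\varepsilon} X^{1+\varepsilon}$. A direct Euler-product computation shows that $\mu_\pi$ is multiplicative and supported on cubefree integers, with $\mu_\pi(p)=-\lambda_\pi(p)$ and $\mu_\pi(p^2)=\alpha_{1,\pi}(p)\alpha_{2,\pi}(p)$; the latter has modulus $1$ at unramified $p$ (since $\pi\in\mathscr{A}$ has unitary central character) and is $O(p^{2\theta})$ at the finitely many ramified $p$. Comparing Euler products, $\sum_n|\mu_\pi(n)|^2 n^{-s}$ is dominated, up to a factor absolutely convergent on $\re(s)>\frac{1}{2}$, by the Rankin--Selberg $L$-function $L(s,\pi\times\tilde\pi)$. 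Together with \eqref{eqn:RS_ramanujan_average} and a standard contour shift, this yields $\sum_{n\le X}|\mu_\pi(n)|^2\ll_\pi X(\log X)^{O(1)}$, and Cauchy--Schwarz delivers the desired $X^{1+\varepsilon}$ bound.

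Combining the two steps, the full sum is $\ll T^{\frac12+\theta+2\varpi+O(\varepsilon)}$. Since $\varpi<\frac{1}{4}-\frac{\theta}{2}$ gives $\frac12+\theta+2\varpi<1$, taking $\varepsilon$ sufficiently small produces $T^{1-\delta}$ for some $\delta>0$, as required. The main obstacle here is precisely the mean bound for $|\mu_\pi(n)|$: the trivial pointwise bound $|\mu_\pi(n)|\ll n^{\theta+\varepsilon}$ coming from \eqref{eqn:GRC} would only permit $\varpi<\frac{1}{4}-\frac{\theta}{2(1+\theta)}$, strictly smaller than the announced range, so the Rankin--Selberg input \eqref{eqn:RS_ramanujan_average} is what makes the corollary match the hypothesis. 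Everything else is a routine check of exponents.
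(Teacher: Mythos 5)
Your proposal is correct and follows essentially the same route as the paper's proof: insert Lemma \ref{lem:off_diag} termwise, bound $P(a)P(b)(ab)^{-1/\log T}$ trivially, and reduce to a mean-value bound for $|\mu_\pi(a)|$ over $a\le T^{\varpi}$ controlled by the Rankin--Selberg input \eqref{eqn:RS_ramanujan_average}, yielding the exponent $\frac12+\theta+2\varpi+O(\varepsilon)<1$. The only difference is in how that mean value is handled: the paper uses the one-line pointwise domination $|\mu_{\pi}(a)|\ll 1+|\lambda_{\pi}(a)|\ll 1+|\lambda_{\pi}(a)|^2$ followed by \eqref{eqn:RS_ramanujan_average}, whereas you take a slightly longer Euler-product and Cauchy--Schwarz detour resting on the same input.
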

\begin{proof}
    Let $\epsilon>0$.  First, observe that $|\mu_{\pi}(a)|\ll 1+|\lambda_{\pi}(a)|\ll 1+|\lambda_{\pi}(a)|^2$.
    We then apply Lemma \ref{lem:off_diag} and bound everything else trivially to obtain
    \[
        \sum_{a,b\leq T^\varpi} \frac{\mu_{\pi}(a)\overline{\mu_{\pi}(b)}}{\sqrt{ab}} \frac{P(a) P(b)}{(ab)^{\frac{1}{\log T}}} \sum_{\pm} N_{a,b}^{\pm}(\alpha,\beta) \ll T^{\frac{1}{2}+\theta+\epsilon} \Big(\sum_{a\leq T^\varpi} 1 + \sum_{a\leq T^\varpi} |\lambda_{\pi}(a)|^2 \Big)^2.
    \]
    By \eqref{eqn:RS_ramanujan_average}, this is $\ll_{f,\epsilon} T^{\frac{1}{2}+\theta + 2\varpi +\epsilon}$.   If $\varpi \leq \frac14 - \frac\theta 2-\epsilon$, then the above display is $\ll T^{1-\epsilon}$.
\end{proof}

\begin{proposition}
\label{prop:AndersenThorner}
If $T$ is sufficiently large and $\varpi\in(0, \frac{1}{4}-\frac{\theta}{2})$ is fixed, then
    \[
    \int_{T/2}^{T}|L(\tfrac{1}{2}+\tfrac{1}{\log T}+it,\pi)M_{\pi}(\tfrac{1}{2}+\tfrac{1}{\log T}+it,T^{\varpi})|^2 dt\ll T.
    \]
\end{proposition}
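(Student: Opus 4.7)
The plan is to reduce the integral to $I_f(\alpha,\beta)$ at the specific point $\alpha = \beta = \frac{1}{\log T}$, then invoke the diagonal/off-diagonal decomposition recorded above.

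First, I would use positivity. For real $\beta$ we have $L(\tfrac12+\beta-it,\tilde\pi) = \overline{L(\tfrac12+\beta+it,\pi)}$, so setting $\alpha = \beta = \tfrac{1}{\log T}$ makes the integrand of $I_f$ equal to
\[
w(t)\,|L(\tfrac12+\tfrac{1}{\log T}+it,\pi)|^2\,|M_\pi(\tfrac12+\tfrac{1}{\log T}+it,T^\varpi)|^2 \ge 0.
\]
Since $w \ge 0$ and $w \equiv 1$ on $[T/2,T]$, this gives $\int_{T/2}^{T}|LM_\pi|^2\,dt \le I_f(\tfrac{1}{\log T},\tfrac{1}{\log T})$, so it suffices to prove $I_f \ll T$ at this point.

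Applying the decomposition $I_f = \sum_{a,b\le T^\varpi}(\cdots)(D^{\pm}_{a,b} + N^{\pm}_{a,b})$ from the excerpt, Corollary \ref{cor:off_diagonal} immediately gives $O(T^{1-\delta})$ for the off-diagonal contribution. It then remains to show the diagonal sum is $O(T)$. To handle that, I would parametrize $am = bn$ by $a = a_1 d$, $b = b_1 d$ with $\gcd(a_1,b_1)=1$ (so $m = b_1 k$, $n = a_1 k$), insert the contour representation of $V_{\alpha,\beta}(mn,t)$, and evaluate the inner Dirichlet series
\[
\sum_{k\geq 1}\frac{\lambda_\pi(b_1 k)\overline{\lambda_\pi(a_1 k)}}{k^{1+2/\log T+2s}}
\]
using the factorization $L(s,\pi\otimes\tilde\pi) = \zeta(s)L(s,\mathrm{Ad}^2\pi)$ together with the zero-free region \eqref{eqn:ZFR_RS}. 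Shifting the $s$-contour to the left of $s=-1/\log T$ and collecting the residue at the simple pole of $\zeta(1+2s)$, the outer sum over $a,b$ weighted by $\mu_\pi(a)\overline{\mu_\pi(b)}$ should exhibit the standard mollifier cancellation: the Dirichlet convolution identity \eqref{eqn:mob_def} combines with the Rankin--Selberg Euler factors so that the residual arithmetic sum collapses to a bounded constant. Multiplying by $\int w(t)\,dt \asymp T$ then delivers the claimed $O(T)$.

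The main obstacle is carrying out the diagonal analysis cleanly in the $\GL_2$ setting without assuming Ramanujan. Concretely, one must (i) control the shifted-contour tail using \eqref{eqn:RS_ramanujan_average} together with convexity bounds for $L(s,\mathrm{Ad}^2\pi)$ on vertical lines, (ii) verify that the smooth cutoff $P$ from \eqref{eqn:P(t)_def} is compatible with a short-contour argument for the outer $(a,b)$-sum, and (iii) ensure the Euler-product manipulations hold uniformly in $\pi\in\mathscr{A}$. These steps are carried out for holomorphic $\pi$ in \cite{Bernard,AT}; the adaptation here uses the same structural ingredients, and the binding constraint $\varpi < \tfrac14 - \tfrac\theta2$ arises entirely from the off-diagonal bound rather than from the diagonal evaluation.
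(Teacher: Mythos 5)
Your proposal is correct and follows essentially the same route as the paper: reduce to $I_f(\tfrac{1}{\log T},\tfrac{1}{\log T})$ by positivity of $w$ and of the integrand, dispose of the off-diagonal terms via Corollary \ref{cor:off_diagonal}, and evaluate the diagonal by Mellin inversion, the Rankin--Selberg factorization $L(s,\pi\otimes\tilde\pi)=\zeta(s)L(s,\mathrm{Ad}^2\pi)$, contour shifts using the zero-free region \eqref{eqn:ZFR_RS}, and the mollifier cancellation from \eqref{eqn:mob_def}, exactly as the paper does by importing the corresponding lemmas of \cite{Bernard,AT}. The only cosmetic difference is that the paper first converts the $D^-_{a,b}$ terms into $D^+_{a,b}$ at negated shifts (its analogue of \cite[Lemma 11]{Bernard}) before running this analysis, a step subsumed in your ``evaluate the diagonal'' outline.
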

\begin{proof}
In light of Corollary \ref{cor:off_diagonal}, it remains to bound the diagonal contribution.  We first note by a calculation identitcal to \cite[Lemma 11]{Bernard} that
\begin{multline*}
\sum_{a,b\leq T^{\varpi}}\frac{\mu_{\pi}(a)\overline{\mu_{\pi}(b)}}{\sqrt{ab}}\frac{P(a)P(b)}{(ab)^{\frac{1}{\log T}}}(D_{a,b}^{+}(\alpha,\beta)+D_{a,b}^{-}(\alpha,\beta))\\
=\sum_{a,b\leq T^{\varpi}}\frac{\mu_{\pi}(a)\overline{\mu_{\pi}(b)}}{\sqrt{ab}}\frac{P(a)P(b)}{(ab)^{\frac{1}{\log T}}}(D_{a,b}^{+}(\alpha,\beta)+T^{-2(\alpha+\beta)}D_{a,b}^{+}(-\alpha,-\beta))+O\Big(\frac{T}{\log T}\Big).
\end{multline*}
So it suffices for us to estimate
\[
I^D(\alpha,\beta)=\sum_{a,b\leq T^{\varpi}}\frac{\mu_{\pi}(a)\overline{\mu_{\pi}(b)}}{\sqrt{ab}}\frac{P(a)P(b)}{(ab)^{\frac{1}{\log T}}}D_{a,b}^{+}(\alpha,\beta)
\]
with $\alpha=\beta=1/\log T$.

Let $\sigma_0 = \frac{1}{2}+\frac{1}{\log T}$.  We observe via the Mellin inversion that $I^D(\alpha,\beta)$ equals
\begin{align*}
\frac{4}{(\log (T^{\varpi}))^2}\int_{-\infty}^{\infty}\frac{1}{(2\pi i)^3}\int_{(1)}\int_{(1)}\int_{(1)}T^{\frac{\varpi(u+v)}{2}}(T^{\frac{u\varpi}{2}}-1)(T^{\frac{v\varpi}{2}}-1)\frac{G(s)}{s}g_{\alpha,\beta}(s,t)\\
\times\sum_{\substack{a,b,m,n\geq 1 \\ am=bn}}\frac{\mu_{\pi}(a)\overline{\mu_{\pi}(b)}\lambda_{\pi}(m)\overline{\lambda_{\pi}(n)}}{a^{\sigma_0+v}b^{\sigma_0+u}m^{\frac{1}{2}+\alpha+s}n^{\frac{1}{2}+\beta+s}}ds\frac{du}{u^2}\frac{dv}{v^2}dt.
\end{align*}
By a computation identical to \cite[Lemma 6]{Bernard}, there exists a product of half-planes containing an open neighborhood of the point $u=v=s=0$ and an Euler product $A_{\alpha,\beta}(u,v,s)$, absolutely convergent for $(u,v,s)$ in said product of half-planes, such that
\begin{multline*}
    \sum_{\substack{a,b,m,n\geq 1 \\ am=bn}}\frac{\mu_{\pi}(a)\overline{\mu_{\pi}(b)}\lambda_{\pi}(m)\overline{\lambda_{\pi}(n)}}{a^{\sigma_0+v}b^{\sigma_0+u}m^{\frac{1}{2}+\alpha+s}n^{\frac{1}{2}+\beta+s}}=A_{\alpha,\beta}(u+\tfrac{1}{\log T},v+\tfrac{1}{\log T},s)\\
\times \frac{L(1+\alpha+\beta+2s,\pi\otimes\tilde{\pi})L(1+\frac{2}{\log T}+u+v,\pi\otimes\tilde{\pi})}{L(1+\frac{1}{\log T}+\alpha+u+s,\pi\otimes\tilde{\pi})L(1+\frac{1}{\log T}+\beta+v+s,\pi\otimes\tilde{\pi})}.
\end{multline*}
By M{\"o}bius inversion and a continuity argument, one can prove that $A_{0,0}(0,0,0)=1$ \cite[Lemma 7]{Bernard}.

Upon choosing $\delta$ sufficiently small and shifting the contours to $\re(u)=\delta$, $\re(v)=\delta$, and $\re(s)=-\delta/2$, we find that
\begin{multline*}
  I^D(\alpha,\beta) = \frac{4L(1+\alpha+\beta,\pi\otimes\tilde{\pi})}{(2\pi i)^2(\log (T^{\varpi}))^2}\int_{\R}w(t)dt \int_{(\delta)}\int_{(\delta)}T^{\frac{\varpi(u+v)}{2}}(T^{\frac{u\varpi}{2}}-1)(T^{\frac{v\varpi}{2}}-1)\\
    \times \frac{L(1+\frac{2}{\log T}+u+v,\pi\otimes\tilde{\pi})A_{\alpha,\beta}(u+\tfrac{1}{\log T},v+\tfrac{1}{\log T},0)}{L(1+\frac{1}{\log T}+\alpha+u,\pi\otimes\tilde{\pi})L(1+\frac{1}{\log T}+\beta+v,\pi\otimes\tilde{\pi})}\frac{du}{u^2}\frac{dv}{v^2}+O\Big(\frac{T}{\log T}\Big).
\end{multline*}
The contribution from the pole at $u=v=0$ determines the magnitude of the double integral over $u$ and $v$, and this magnitude is $\asymp_{\pi}(\log T)^2$ for $\alpha$ and $\beta$ in our prescribed range. (This follows once we push the $u$- and $v$-contours to the left using \eqref{eqn:ZFR_RS}.)  Since $\int_{\R}w(t)dt\asymp T$ by hypothesis, we combine all preceding contributions and choose $\alpha=\beta=1/\log T$ to conclude the desired bound.
\end{proof}

\begin{corollary}
    \label{cor:AndersenThorner}
If $T$ is sufficiently large and $\varpi\in(0, \frac{1}{4}-\frac{\theta}{2})$ is fixed, then
    \[
    \int_{T/2}^{T}|L(\tfrac{1}{2}+\tfrac{1}{\log T}+it,\pi)M_{\pi}(\tfrac{1}{2}+\tfrac{1}{\log T}+it,T^{\varpi})-1|^2 dt\ll T.
    \]
\end{corollary}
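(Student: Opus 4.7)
The plan is to deduce this corollary directly from Proposition 3.5 by an elementary triangle-type inequality, so no new analytic input is needed. Set
\[
F(t) := L\bigl(\tfrac{1}{2}+\tfrac{1}{\log T}+it,\pi\bigr)\,M_{\pi}\bigl(\tfrac{1}{2}+\tfrac{1}{\log T}+it,T^{\varpi}\bigr).
\]
The pointwise inequality $|F(t) - 1|^2 \leq 2|F(t)|^2 + 2$, which follows from $(|F(t)|+1)^2 \leq 2(|F(t)|^2+1)$, integrated over $t \in [T/2, T]$ yields
\[
\int_{T/2}^{T} |F(t) - 1|^2 \, dt \;\leq\; 2\int_{T/2}^{T} |F(t)|^2 \, dt \;+\; T.
\]
By Proposition 3.5 the first integral on the right is $\ll T$, and the second term is exactly $T$; summing gives the desired bound.

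There is no real obstacle: all of the substantive analytic work---the Mellin-inversion evaluation of the diagonal contribution $I^D(\alpha,\beta)$ and the off-diagonal estimate of Corollary 3.4---has already been absorbed into Proposition 3.5. The reason this corollary is stated separately is that it is precisely the form needed to launch the proof of Theorem 1.1: at every nontrivial zero $\rho=\beta+i\gamma$ of $L(s,\pi)$ one has $F(\rho)=0$, hence $|F(\rho)-1|=1$, so an $L^2$ bound on $F-1$ on the vertical line $\re(s)=\tfrac{1}{2}+\tfrac{1}{\log T}$ converts, via a standard Littlewood/Jensen-type argument applied to the analytic function $F-1$ on small disks, into the claimed bound on $N_\pi(\sigma,T)$.
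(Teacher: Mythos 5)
Your argument is correct and is essentially the paper's own proof: the paper deduces the corollary from Proposition 3.5 via the Cauchy--Schwarz inequality, which amounts to the same elementary bound $\int|F-1|^2 \ll \int|F|^2 + T$ that you use. No further comment is needed.
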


\begin{proof}
This follows from Proposition \ref{prop:AndersenThorner} and the Cauchy--Schwarz inequality.
\end{proof}

We prove a corresponding estimate on a vertical line to the right of $\mathrm{Re}(s)=1$.

\begin{lemma}
    \label{lem:MVT_Dirichlet}
If $\varpi\in(0, \frac{1}{4}-\frac{\theta}{2})$ and $A>\theta$ are fixed and $T$ is sufficiently large, then
\[
\int_{T/2}^T|L(1+A+it,\pi)M_{\pi}(1+A+it,T^{\varpi})-1|^2 dt\ll_{A} T^{1+(\theta-A-\frac{1}{2})\varpi}(\log T)^{15}.
\]
\end{lemma}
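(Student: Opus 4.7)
The plan is to view $F(s):=L(s,\pi)M_\pi(s,T^\varpi)-1$ as an absolutely convergent Dirichlet series on the line $\re(s)=1+A$, exploit cancellation that forces its Dirichlet coefficients to vanish for $n\le T^{\varpi/2}$, and then apply the Montgomery--Vaughan mean-value theorem.

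Writing out the product, $L(s,\pi)M_\pi(s,T^\varpi)=\sum_n c_n/n^s$ with
\[
c_n=\sum_{\substack{b\mid n\\ b\le T^\varpi}}\lambda_\pi(n/b)\,\mu_\pi(b)P(b).
\]
If $n\le T^{\varpi/2}$, every divisor $b$ of $n$ satisfies $P(b)=1$, and so \eqref{eqn:mob_def} forces $c_n=\delta_{n,1}$. Thus $F(s)=\sum_{n>T^{\varpi/2}}c_n/n^s$, and it is precisely this restriction to $n>T^{\varpi/2}$ that produces the saving beyond the trivial bound.

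The coefficients will be bounded as follows. From \eqref{eqn:GRC}, $|\lambda_\pi(m)|\le\tau(m)m^\theta$ for every $m$; inverting the degree-two Euler factor at each prime gives $\mu_\pi(p)=-\lambda_\pi(p)$, $\mu_\pi(p^2)=\alpha_{1,\pi}(p)\alpha_{2,\pi}(p)$, and $\mu_\pi(p^k)=0$ for $k\ge 3$, which yields $|\mu_\pi(b)|\le\tau(b)b^\theta$. Convolving these estimates and using $\tau\ast\tau=\tau_4$ produces
\[
|c_n|\le n^\theta\tau_4(n).
\]
Since $A>\theta$, one has $\sum_n n\,|c_n/n^{1+A}|^2<\infty$, so Montgomery--Vaughan applies directly to the infinite series, giving
\[
\int_0^T\bigl|F(1+A+it)\bigr|^2\,dt=\sum_{n>T^{\varpi/2}}\frac{|c_n|^2}{n^{2(1+A)}}\bigl(T+O(n)\bigr).
\]

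To finish, I would combine the Selberg--Delange asymptotic $\sum_{n\le X}\tau_4(n)^2\ll X(\log X)^{15}$ (equivalent to the pointwise majorization $\tau_4(n)^2\le\tau_{16}(n)$) with partial summation to obtain
\[
\sum_{n>N}\frac{\tau_4(n)^2}{n^{2(1+A-\theta)}}\ll(\log N)^{15}\,N^{2\theta-1-2A},
\]
and substituting $N=T^{\varpi/2}$ shows that the $T$-term contributes $\ll T^{1+\varpi(\theta-A-1/2)}(\log T)^{15}$, matching the claim. The $O(n)$-term produces the analogous sum with one fewer power of $n$ in the denominator and is smaller by the factor $T^{1-\varpi/2}\ge 1$, so it is absorbed. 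The main technical ingredients are the divisor majorization $|\mu_\pi(n)|\le\tau(n)n^\theta$ (which bypasses the unavailability of Ramanujan for the Möbius coefficients) and the precise exponent $15=4^2-1$ governing $\sum\tau_4(n)^2$; everything else is routine bookkeeping.
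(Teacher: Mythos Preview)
Your proposal is correct and follows essentially the same route as the paper: both arguments observe that the Dirichlet coefficients of $L(s,\pi)M_\pi(s,T^\varpi)-1$ vanish for $n\le T^{\varpi/2}$, bound the remaining coefficients by $n^\theta d_4(n)$ (using exactly the Euler-factor inversion you describe), majorize $d_4(n)^2$ by $d_{16}(n)$, and apply the Montgomery--Vaughan mean-value theorem. Your write-up merely spells out in more detail the bound on $\mu_\pi$ and the partial-summation step that the paper leaves implicit.
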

\begin{proof}
Let $A>\theta$.  It follows immediately from \cite[Corollary 3]{MV_Hilbert} that if $T>1$ and $(b_n)$ is any sequence of complex numbers satisfying $\sum_{n}n|b_n|^2<\infty$, then
    \begin{equation}
    \label{eqn:MV_Hilbert}
    \int_{T/2}^{T}\Big|\sum_{n=1}^{\infty}b_n n^{-it}\Big|^2 dt=\sum_{n=1}^{\infty}|b_n|^2(T/2+O(n)).
    \end{equation}
We define $a_n$ by the identity
\begin{align*}
L(1+A+it,\pi)M_{\pi}(1+A+it,T^{\varpi})-1 = \sum_{n=1}^{\infty}a_n n^{-1-A-it}.
\end{align*}
By \eqref{eqn:mob_def}, \eqref{eqn:P(t)_def}, and \eqref{eqn:mollifier_def}, we have that $a_n=0$ for all $n\leq T^{\varpi/2}$ and $|a_n|^2\ll (n^{\theta}d_4(n))^2\ll n^{2\theta}d_{16}(n)$, where $d_k(n)$ is the $n$-th Dirichlet coefficient of $\zeta(s)^k$.  The desired result follows once we apply \eqref{eqn:MV_Hilbert} with $b_n = a_n n^{-1-A}$.
\end{proof}

\begin{proof}[Proof of Theorem \ref{thm:ZDE}]
    We use Gabriel's convexity principle \cite[\S 7.8]{titchmarsh2} to interpolate the bounds in Corollary \ref{cor:AndersenThorner} and Lemma \ref{lem:MVT_Dirichlet}.  In particular, if $0<\varpi<\frac{1}{4}-\frac{\theta}{2}$ and $A>\theta$ are fixed and $c=\varpi(1+2A-2\theta)/(1+2A)$, then in the range $\frac{1}{2}+\frac{1}{\log T}\leq\sigma\leq 1+A$, we have
    \begin{equation}
    \label{eq:interpolation}
        \begin{aligned}
            &\int_{T/2}^T|L(\sigma+it,\pi)M_{\pi}(\sigma +it,T^{\varpi})-1|^2 dt\\
&\ll T^{\frac{1+A-\sigma}{1+A-(\frac{1}{2}+\frac{1}{\log T})}}(T^{1+(\theta-A-\frac{1}{2})\varpi}(\log T)^{15})^{1-\frac{1+A-\sigma}{1+A-(\frac{1}{2}+\frac{1}{\log T})}}\ll T^{ 1 - c(\sigma - \frac{1}{2})}.
        \end{aligned}
    \end{equation}
    Define $\Phi(s) := 1- (1- L(s,\pi) M_{\pi}(s,T^{\varpi}))^2$.  By construction, if $\alpha\in\mathbb{C}$, then
    \[
    \mathop{\mathrm{ord}}_{s=\alpha}\Phi(s)\geq \mathop{\mathrm{ord}}_{s=\alpha}L(s,\pi)M_{\pi}(s,T^{\varpi}).
    \]
    For any $M \ge 1$, let $C_M$ be the rectangular contour with corners $\sigma + iT/2$,  $\sigma + iT$, $M + iT/2$, and $M + i T$.  Applying Littlewood's Lemma (\cite{Titchmarsh}, pp. 132--133) and letting $M \to \infty$, we have
    \begin{align*}
        & \int_{\sigma}^{1}(N_{\pi}(\sigma', T) - N_{\pi}(\sigma',\tfrac{T}{2}))d \sigma'\le \lim_{M \to \infty} \frac{1}{2 \pi} \int_{C_M} \log \Phi(s) ds \\
        & = \frac{1}{2 \pi} \int_{T/2}^{T} \log | \Phi(\sigma+it)| dt + \frac{1}{2\pi} \int_{\sigma}^{\infty} \arg( \Phi(x+i T)) dx
        - \frac{1}{2\pi} \int_{\sigma}^{\infty} \arg(\Phi(x+\tfrac{iT}{2}) )dx.
    \end{align*}
    In view of the bound $\log|1 + z| \le  |z|$, it follows from \eqref{eq:interpolation} that
    \[
        \frac{1}{2 \pi} \int_{T/2}^{T} \log | \Phi(\sigma+it)| dt\leq \int_{T/2}^T|L(\sigma+it,\pi)M_{\pi}(\sigma +it,T^{\varpi})-1|^2 dt\ll T^{ 1 - c(\sigma - \frac{1}{2})}.
    \]

    For the second and third integrals, we consider the integrals over $x \le 1$ and over $x >1$ separately.
    For $x \in (1, \infty)$ we can trivially bound the Dirichlet series as
    \[
        |1- L(x + iT,\pi) M_{\pi}(x+iT,T^{\varpi})| \le \sum_{n=2}^{\infty} |a_n| n^{-x}\ll 2^{-x},
    \]
    where $(a_n)$ is a certain sequence of complex numbers such that $|a_n|\ll_{\epsilon}n^{\theta+\epsilon}$ for any fixed $\epsilon>0$ and all $n\geq 2$.  Thus, we have
    \[
        |\arg( 1- (1- L(x + iT,\pi) M_{\pi}(x+iT,T^{\varpi}))^2))| \ll 2^{-x},
    \]
    so
    \[
        \Big|\int_1^{\infty} \arg(\Phi(x+iT) )dx\Big|,~\Big|\int_1^{\infty} \arg(\Phi(x+2iT) )dx\Big|\ll 1.
    \]
 To handle the integrals for $\frac{1}{2}+\frac{1}{\log T}\leq x\leq 1$, we use the trivial bound
    \[
        \Big|\int_{\sigma}^{1}\arg(\Phi(x+2iT))dx\Big|\leq (1-\sigma)\max_{\sigma\leq x\leq 1}|\arg(\Phi(x+2iT))|\ll \log T.
    \]
    A proof of the final bound is contained within the proof of \cite[Theorem 5.8]{IK}.  The corresponding integral for $\Phi(x+iT)$ has the same bound.

    By the preceding work, it follows by dyadic decomposition that
    \[
        \int_{\sigma}^{1} N_{\pi}(\sigma', T) d\sigma' \ll T^{1 - c(\sigma - \frac{1}{2})},\qquad \frac{1}{2}+\frac{1}{\log T}\leq\sigma\leq 1.
    \]
    This estimate, the mean value theorem for integrals, and the fact that $N_{\pi}(\sigma,T)$ is monotonically decreasing as $\sigma$ increases together imply that
    \begin{align*}
        N_{\pi}(\sigma,T)&\leq \frac{1}{\sigma-(\sigma-\frac{1}{\log T})}\int_{\sigma-\frac{1}{\log T}}^{\sigma}N_{\pi}(\sigma',T)d\sigma'\\
        &\ll \frac{1}{\sigma-(\sigma-\frac{1}{\log T})}\int_{\sigma-\frac{1}{\log T}}^{1}N_{\pi}(\sigma',T)d\sigma'\ll T^{1-c(\sigma-\frac{1}{2})}\log T.
    \end{align*}
    If $\frac{1}{2}\leq \sigma\leq \frac{1}{2}+\frac{1}{\log T}$, then \eqref{eq:number_of_zeros} implies that $N_{\pi}(\sigma,T)\ll T\log T\asymp T^{1-c(\sigma-\frac{1}{2})}\log T$.

    To finish the proof, note that if $\theta=0$, for all $A>0$, we have $c=\varpi$.  If $\theta>0$, then fix $0<\epsilon<\theta\varpi/(\theta+\frac{1}{2})$ and choose $A = \frac{\theta\varpi}{\epsilon}-\frac{1}{2}$.  With these choices, we find that $c>\varpi-\epsilon$.  Theorem \ref{thm:ZDE} now follows.
\end{proof}

%%%%
\section{Proof of Theorem \ref{thm:densityfunction}}

We begin with a few preliminary lemmas. Throughout the section, $\theta$ is an admissible exponent
toward the generalized Ramanujan conjecture as in Theorem~\ref{thm:ZDE}.
Our first result is an $n$-dimensional version of the Riemann--Lebesgue lemma.

\begin{lemma}
\label{thm:coefficientbound}
    Let $J \geq 1$.  Suppose that $h\in C^{n+2}(\T^n)$ has the Fourier expansion
    \[
        h(\bm{t}) = \sum_{\bm{m} \in \mathbb{Z}^n} c_{\bm{m}} e^{2 \pi i (\bm{m} \cdot \bm{t})}.
    \]
  We have $|c_{\bm{m}}|\ll_h \|\bm{m}\|_2^{-n-2}$, and consequently, we have
    \[
        h(\bm{t}) = \sum_{\substack{\bm{m} \in \mathbb{Z}^n \\ \|\bm{m}\|_2\leq J}}
        c_{\bm{m}} e^{2 \pi i (\bm{m} \cdot \bm{t})} + O_h(J^{-2}).
    \]
\end{lemma}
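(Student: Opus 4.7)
The plan is a standard multidimensional Riemann--Lebesgue argument via integration by parts, followed by a lattice-point count in dyadic shells.

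First I would establish the coefficient bound $|c_{\bm{m}}|\ll_h \|\bm{m}\|_2^{-n-2}$. For $\bm{m}=\bm{0}$ there is nothing to prove, so assume $\bm{m}\neq\bm{0}$ and pick a coordinate $j=j(\bm{m})\in\{1,\dots,n\}$ with $|m_j|=\max_i|m_i|$, so that
\[
|m_j|\geq \frac{\|\bm{m}\|_2}{\sqrt{n}}.
\]
Starting from
\[
c_{\bm{m}} = \int_{\T^n} h(\bm{t})\,e^{-2\pi i(\bm{m}\cdot\bm{t})}\,d\bm{t},
\]
I would integrate by parts $n+2$ times in the $t_j$ variable. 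Since $h\in C^{n+2}(\T^n)$ and $\T^n$ has no boundary, every boundary term vanishes and one obtains
\[
c_{\bm{m}} = \frac{1}{(2\pi i m_j)^{n+2}}\int_{\T^n}\frac{\partial^{\,n+2}h}{\partial t_j^{\,n+2}}(\bm{t})\,e^{-2\pi i(\bm{m}\cdot\bm{t})}\,d\bm{t}.
\]
Bounding the integrand by $\|\partial_{t_j}^{\,n+2}h\|_{\infty}\leq \max_{1\leq i\leq n}\|\partial_{t_i}^{\,n+2}h\|_{\infty}=:C_h$, which depends only on $h$, gives
\[
|c_{\bm{m}}|\leq \frac{C_h}{(2\pi|m_j|)^{n+2}}\ll_h \|\bm{m}\|_2^{-n-2},
\]
which is the first assertion.

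For the truncation statement I would estimate the tail of the Fourier series by a dyadic decomposition. Since the number of lattice points $\bm{m}\in\Z^n$ with $\|\bm{m}\|_2\in[R,2R]$ is $O(R^n)$ for $R\geq 1$,
\[
\sum_{\|\bm{m}\|_2>J}|c_{\bm{m}}|\ll_h \sum_{k\geq 0}\sum_{\|\bm{m}\|_2\in[2^kJ,\,2^{k+1}J]}\|\bm{m}\|_2^{-n-2}\ll \sum_{k\geq 0}(2^kJ)^n(2^kJ)^{-n-2}\ll J^{-2}.
\]
Since $h$ is continuous and its Fourier series converges absolutely (by the first part), subtracting the partial sum over $\|\bm{m}\|_2\leq J$ gives the claimed $O_h(J^{-2})$ remainder uniformly in $\bm{t}\in\T^n$.

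The argument has no real obstacle: the only mild point is choosing the coordinate $j$ to realize $|m_j|\gtrsim \|\bm{m}\|_2$ so that a single direction of integration by parts yields the full $\|\bm{m}\|_2^{-n-2}$ decay, and then being careful with the dyadic lattice-point count so that the exponent $n+2$ is exactly what is needed to turn the $O(R^n)$ shell count into a convergent $J^{-2}$ tail.
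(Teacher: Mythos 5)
Your proof is correct and follows essentially the same route as the paper: integrate by parts $n+2$ times in the coordinate $t_j$ maximizing $|m_j|$, use $|m_j|\geq\|\bm{m}\|_2/\sqrt{n}$ to get the coefficient decay, and then sum the tail. You spell out the dyadic lattice-point count for the tail more explicitly than the paper does (which just invokes the triangle inequality), but the underlying argument is identical.
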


\begin{proof}
    We have
    \begin{equation}\label{eq:cm}
        c_{\bm{m}} = \int_{\mathbb{T}^n} h(\bm{t}) e^{- 2 \pi i (\bm{m} \cdot \bm{t})} \textup{d}\bm{t}.
    \end{equation}
    Let $\bm{m} = (m_1, m_2, \ldots, m_n)$. Choose $j \in \{1, \ldots, n\}$ such that $|m_j|=\|\bm{m}\|_{\infty}$.
    Integrate \eqref{eq:cm} by parts for $n+2$ times with respect to the coordinate $t_j$ of $\bm{t}$ so that
    \begin{equation}
        \label{cm_bound}
        c_{\bm{m}} = \frac{(-1)^{n+2}}{(2\pi m_j)^{n+2}}\int_{\mathbb{T}^n} \Big( \frac{\partial^{n+2}}{\partial t_j^{n+2}} h(\bm{t}) \Big)
        e^{-2\pi i (\bm{m} \cdot \bm{t})} \textup{d}\bm{t}.
    \end{equation}
Since $\sqrt{n}\|\bm{m}\|_{\infty}\geq \|\bm{m}\|_2$, the desired result follows from the triangle inequality.
\end{proof}

\begin{lemma} \label{exp_sum_bound}
    Let $x > 1$ and $T \ge 2$, and let $\langle x\rangle$ being the closest integer to $x$.  We have
    \begin{align*}
        \sum_{|\gamma|\leq T} x^{\rho}
        = -\frac{\Lambda_{\pi}(\langle x \rangle)}{\pi} \cdot \frac{e^{iT \log(x / \langle x \rangle)} - 1}{i \log(x / \langle x \rangle)}
        + O \Big( x^{1 + \theta}(\log(2 x) + \log T) + \frac{\log T}{\log x} \Big).
    \end{align*}
\end{lemma}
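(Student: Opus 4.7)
The plan is a standard Landau-type contour integration argument applied to $\Phi(s) := x^{s} L'(s,\pi)/L(s,\pi)$. Consider the rectangular contour $\mathcal{R}$ with vertices $c \pm iT$ and $-a \pm iT$, where $c = 1 + 1/\log(2x)$ and $a$ is a large parameter. By the Riemann--von Mangoldt formula \eqref{eq:number_of_zeros}, I can shift $T$ by an amount $O(1)$ so that the horizontal lines $\Im(s) = \pm T$ lie at distance $\gg 1/\log T$ from every nontrivial zero, which, via the partial-fraction expansion coming from the Hadamard factorization of $\Lambda(s,\pi)$, ensures the standard bound $|L'(\sigma \pm iT, \pi)/L(\sigma \pm iT, \pi)| \ll \log T$ uniformly in $\sigma$. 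The residue theorem then yields
\[
\sum_{|\gamma| \le T} x^{\rho} = \frac{1}{2\pi i}\oint_{\mathcal{R}} \Phi(s)\,ds + O_{a}(1),
\]
where the $O_{a}(1)$ absorbs the finitely many trivial-zero and Gamma-factor-pole residues in the interior of $\mathcal{R}$.

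The main contribution is isolated on the right vertical edge, where the Dirichlet series $-L'(s,\pi)/L(s,\pi) = \sum_{n} \Lambda_{\pi}(n)n^{-s}$ converges absolutely. Swapping the sum and integral gives
\[
\frac{1}{2\pi i}\int_{c-iT}^{c+iT} \Phi(s)\,ds = -\sum_{n \ge 1}\Lambda_{\pi}(n)\cdot \frac{1}{2\pi i}\int_{c-iT}^{c+iT}(x/n)^{s}\,ds,
\]
and the inner integral is evaluated in closed form via $\int y^{s}\,ds = y^{s}/\log y$. The term $n = \langle x\rangle$ is the dominant one: combining the closed-form evaluation with $(x/\langle x\rangle)^{c} = 1 + O(1/\log x)$ and the elementary identity $\int_{0}^{T}y^{it}\,dt = (e^{iT\log y}-1)/(i\log y)$ yields exactly the stated main term. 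For $n \ne \langle x\rangle$, the trivial bound $|I_{n}| \ll (x/n)^{c}/|\log(x/n)|$ combined with $|\Lambda_{\pi}(n)| \ll n^{\theta}\log n$ from \eqref{eqn:GRC}, together with a splitting into $|n - \langle x\rangle| \le x/2$ (where I use $|\log(x/n)| \gg |n - \langle x\rangle|/x$) and $|n - \langle x\rangle| > x/2$ (where the tails are summable via the Rankin--Selberg average \eqref{eqn:RS_ramanujan_average}), produces the first error $O(x^{1+\theta}(\log(2x)+\log T))$.

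For the horizontal edges at $\Im(s) = \pm T$, the naive bound $\log T$ on $L'/L$ multiplied by $\int_{-a}^{c} x^{\sigma}\,d\sigma$ would overshoot by a factor of $x$. The remedy is to use the refined Perron estimate $|I_{n}| \ll (x/n)^{c}/(T|\log(x/n)|)$ in combination with the horizontal-edge bound; after summation one obtains the secondary error $O((\log T)/\log x)$. The left edge at $\Re(s) = -a$ is handled via the functional equation $\Lambda(s,\pi) = W(\pi)\Lambda(1-s,\tilde\pi)$, which turns $\Phi(s)$ on that line into an expression dominated by $x^{-a}$ times polynomial factors in $T$; sending $a \to \infty$ eliminates this edge entirely (after the trivial-zero residues have been absorbed into the $O_{a}(1)$ above).

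The main obstacle is recovering the secondary error $(\log T)/\log x$ rather than the crude $x(\log T)/\log x$: this requires a delicate interplay between the horizontal-edge bound and the Perron-type refinement for $n$ of intermediate distance from $\langle x\rangle$, rather than handling the edges and the off-diagonal Dirichlet terms in isolation. Exploiting the cancellation in the Perron integral for such intermediate $n$, while simultaneously respecting the Ramanujan-averaged bounds on $\Lambda_{\pi}$ furnished by \eqref{eqn:RS_ramanujan_average} and \eqref{eqn:GRC}, is the most technical step in the argument.
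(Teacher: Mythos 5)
The paper does not prove this lemma at all: its ``proof'' is the single line ``This is [FSZ, Lemma 2] with $\epsilon=\theta$,'' so what you are really reconstructing is the Landau--Gonek-type argument behind that cited result. Your overall skeleton (contour integration of $x^{s}L'(s,\pi)/L(s,\pi)$ over a rectangle, exact evaluation of the absolutely convergent Dirichlet series on the right edge, functional equation on the far left edge) is indeed the standard route. However, the sketch has genuine gaps at exactly the points where the stated error term is delicate. First, after shifting $T$ by $O(1)$ the best uniform bound one gets in the critical strip is $|L'/L(\sigma\pm iT,\pi)|\ll\log^{2}T$, not $\log T$: there are $\gg\log T$ zeros within distance $1$ of height $T$, each contributing up to $\log T$ to the partial-fraction expansion. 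With $\log^{2}T$, the portion $0\le\sigma\le c$ of the horizontal edges contributes $\asymp x\log^{2}T/\log x$, which is not majorized by $x^{1+\theta}(\log 2x+\log T)+(\log T)/\log x$ for bounded $x$; this is precisely the crux of the lemma and it is not resolved by anything you write. Second, your proposed remedy --- ``the refined Perron estimate $|I_{n}|\ll (x/n)^{c}/(T|\log(x/n)|)$'' --- does not exist here: the integrand has no $1/s$ kernel, so $\frac{1}{2\pi i}\int_{c-iT}^{c+iT}(x/n)^{s}\,ds$ is exactly $\frac{(x/n)^{c}}{\pi}\cdot\frac{\sin(T\log(x/n))}{\log(x/n)}$ with no $1/T$ saving, and in any case the horizontal edges involve $L'/L$ off the half-plane of absolute convergence, so they cannot be controlled by the $I_{n}$. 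Your closing paragraph essentially concedes that ``the most technical step'' is this interplay, and describes the difficulty rather than carrying it out.

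There is also a discrepancy in the main term. Your symmetric contour produces, for $n=\langle x\rangle$, the integral $\int_{-T}^{T}(x/\langle x\rangle)^{it}\,dt=\frac{2\sin(T\omega)}{\omega}$ with $\omega=\log(x/\langle x\rangle)$, whereas the lemma's main term is built from $\int_{0}^{T}(x/\langle x\rangle)^{it}\,dt=\frac{e^{iT\omega}-1}{i\omega}$. These differ by $\frac{1-e^{-iT\omega}}{i\omega}$, of size up to $\min(T,2/|\omega|)$, which multiplied by $|\Lambda_{\pi}(\langle x\rangle)|/\pi$ is \emph{not} absorbed by the stated error; since $\pi$ need not be self-dual the zeros are not symmetric under $\gamma\mapsto-\gamma$, so you cannot pass freely between the one-sided and two-sided normalizations. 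Finally, even the right-edge off-diagonal sum needs more care than ``trivial bounds'': using only $|\Lambda_{\pi}(n)|\ll n^{\theta}\log n$ on the range $x/2<n<2x$ yields $x^{1+\theta}(\log 2x)^{2}$ rather than $x^{1+\theta}\log 2x$. None of these is an unfixable conceptual error --- they are the standard refinements in Landau's and Gonek's treatments --- but as written the proposal does not establish the lemma with the stated error term, and the steps you flag as ``most technical'' are precisely the ones left undone.
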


\begin{proof}
    This is \cite[Lemma 2]{FSZ} with $\epsilon=\theta$.
\end{proof}

Using Theorem \ref{thm:ZDE} and Lemma \ref{exp_sum_bound}, we prove an analogue of \cite[(3.8)]{FZ}.

\begin{lemma} \label{substract_exp}
Let $c$ be as in Theorem \ref{thm:ZDE}.  If $1< x< \exp(\frac{c}{3}\frac{\log T}{\log\log T})$, then
    \[
        \sum_{|\gamma| \leq T} x^{i \gamma}  =  \sum_{|\gamma| \leq T}x^{\rho - \frac{1}{2}} +O\Big(\frac{T (\log x)^2}{\log T}+\frac{T}{(\log T)^2}\Big).
    \]
\end{lemma}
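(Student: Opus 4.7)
The plan is to rewrite
\[
\sum_{|\gamma|\leq T}\bigl(x^{i\gamma}-x^{\rho-1/2}\bigr)=-\sum_{|\gamma|\leq T}x^{i\gamma}\bigl(x^{\beta-1/2}-1\bigr)
\]
and exploit the reflection symmetry $\rho\mapsto 1-\bar{\rho}$ of the zero set of $L(s,\pi)$. Even when $\pi$ is not self-dual this symmetry is available: combining the functional equation $\Lambda(s,\pi)=W(\pi)\Lambda(1-s,\tilde{\pi})$ with the identity $\overline{L(s,\pi)}=L(\bar{s},\tilde{\pi})$ (a consequence of $\alpha_{j,\tilde{\pi}}(p)=\overline{\alpha_{j,\pi}(p)}$), one sees that if $\rho=\beta+i\gamma$ is a zero of $L(s,\pi)$ then so is $1-\bar{\rho}=1-\beta+i\gamma$. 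Crucially, the two paired zeros share the same imaginary part $\gamma$.

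Pairing zeros accordingly, the joint contribution of a pair $\{\rho,1-\bar{\rho}\}$ with $\beta>1/2$ collapses to
\[
x^{i\gamma}\bigl(2-x^{\beta-1/2}-x^{1/2-\beta}\bigr)=-4\,x^{i\gamma}\sinh^{2}\!\bigl(\tfrac{1}{2}(\beta-1/2)\log x\bigr),
\]
while zeros on the critical line are self-paired and contribute $0$. Using $|\sinh(a)|\leq|a|e^{|a|}$ yields the pointwise bound $\sinh^{2}\!\bigl(\tfrac{1}{2}(\beta-1/2)\log x\bigr)\leq\tfrac{1}{4}(\beta-1/2)^{2}(\log x)^{2}x^{\beta-1/2}$, so
\[
\Bigl|\sum_{|\gamma|\leq T}(x^{i\gamma}-x^{\rho-1/2})\Bigr|\;\ll\;(\log x)^{2}\sum_{\substack{|\gamma|\leq T\\ \beta>1/2}}(\beta-1/2)^{2}x^{\beta-1/2}.
\]

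To control the remaining sum I would apply partial summation against the counting function $N_{\pi}(\tfrac{1}{2}+u,T)$. Since the summand $u^{2}x^{u}$ vanishes at $u=0$ and there are no zeros with $\beta>1$, this gives
\[
\sum_{\substack{|\gamma|\leq T\\ \beta>1/2}}(\beta-1/2)^{2}x^{\beta-1/2}=\int_{0}^{1/2}(2u+u^{2}\log x)\,x^{u}\,N_{\pi}(\tfrac{1}{2}+u,T)\,du.
\]
Inserting $N_{\pi}(\tfrac{1}{2}+u,T)\ll T^{1-cu}\log T$ from Theorem~\ref{thm:ZDE} and setting $a:=c\log T-\log x$, the hypothesis $\log x<\tfrac{c}{3}\log T/\log\log T$ guarantees $a\geq\tfrac{c}{2}\log T$ for $T$ large. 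The standard identities $\int_{0}^{\infty}u\,e^{-au}du=1/a^{2}=O(1/(\log T)^{2})$ and $\int_{0}^{\infty}u^{2}\,e^{-au}du=2/a^{3}=O(1/(\log T)^{3})$ then yield $\sum(\beta-1/2)^{2}x^{\beta-1/2}\ll T/\log T+T\log x/(\log T)^{2}$. Multiplying through by $(\log x)^{2}$ and using $\log x\ll\log T$ to absorb the cubic-in-$\log x$ term into the quadratic one produces the claimed $O(T(\log x)^{2}/\log T+T/(\log T)^{2})$.

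The main obstacle is recognising the reflection symmetry $\rho\mapsto 1-\bar{\rho}$ for non-self-dual $\pi$: only after pairing does the linear Taylor coefficient of $x^{\beta-1/2}-1$ cancel, leaving the quadratic $\sinh^{2}$. Without pairing, applying the triangle inequality directly to $\sum x^{i\gamma}(x^{\beta-1/2}-1)$ produces, even after invoking Theorem~\ref{thm:ZDE}, an $O(T\log x)$ estimate that is too weak. Once the pairing is in place the remaining work is a routine Abel summation calibrated against the zero-density exponent $c$, with the restriction on $x$ in the hypothesis exactly what is needed to keep the exponential weight $e^{-au}$ under control.
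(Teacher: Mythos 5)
Your proof is correct and rests on the same two pillars as the paper's: the functional-equation pairing $\rho\mapsto 1-\bar{\rho}$, which converts $x^{\beta-\frac12}+x^{\frac12-\beta}-2$ into $4\sinh^2(\tfrac12(\beta-\tfrac12)\log x)$, and Theorem \ref{thm:ZDE} fed into a partial summation. The only (cosmetic) difference is organizational: the paper first discards the zeros with $|\beta-\tfrac12|\geq\delta:=\tfrac{3}{c}\tfrac{\log\log T}{\log T}$ directly via the zero density estimate (contributing the $T/(\log T)^2$ term) and then uses the elementary bound $\sinh(y)<2y$ on the remaining near-critical zeros, whereas you treat all zeros uniformly with $\sinh^2(a)\leq a^2e^{2a}$ and absorb the resulting weight $x^{\beta-\frac12}$ into the exponential decay $T^{-cu}$ from the density estimate; both routes use the hypothesis on $x$ for exactly the same purpose, namely to keep $\log x$ small relative to $c\log T$.
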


\begin{proof}
    Let $\delta = \frac{3}{c}\frac{\log\log T}{\log T}$, so $0 < \delta \log x < 1$. By Theorem \ref{thm:ZDE}, we have that
    \begin{multline*}
        \Big|\sum_{\substack{|\gamma|\leq T \\ |\beta - \frac{1}{2}| \ge \delta}} (x^{i \gamma} - x^{\rho - \frac{1}{2}})\Big|
        \ll \sum_{\substack{|\gamma|\leq T \\ \beta \ge \frac{1}{2} + \delta}} x^{\beta - \frac{1}{2}}
        \ll x^\delta N_{f}(\tfrac{1}{2} + \delta, T) + \log x \int_{\frac{1}{2} + \delta}^{1}
        x^{\sigma - \frac{1}{2}} N_{f}(\sigma, T) d \sigma \\
        \ll \frac{T}{(\log T)^2}.
    \end{multline*}
    By the functional equation for $L(s,\pi)$, $\beta+i\gamma$ is a nontrivial zero
    if and only if $1-\beta+i\gamma$ is a nontrivial zero.  Therefore, we have
    \begin{equation}
    \label{eqn:close_to_1/2}
    \begin{aligned}
        \Big|\sum_{\substack{|\gamma|\leq T \\ |\beta - \frac{1}{2}| < \delta}}
        (x^{i \gamma} - x^{\rho - \frac{1}{2}})\Big| &= \Big|\sum_{\substack{|\gamma|\leq T \\ \frac{1}{2}<\beta<\frac{1}{2}+\delta}}
        (x^{i \gamma}(1 - x^{\beta - \frac{1}{2}})+x^{i\gamma}(1-x^{\frac{1}{2}-\beta})\Big| \\
        & \leq \sum_{\substack{|\gamma|\leq T \\ 0<\beta-\frac{1}{2}<\delta}}
        |x^{i \gamma}(1 - x^{\beta - \frac{1}{2}})+x^{i\gamma}(1-x^{-(\beta-\frac{1}{2})})| \\
        & = \sum_{\substack{|\gamma|\leq T \\ 0 < \beta-\frac{1}{2} < \delta}}
        (x^{\beta - \frac{1}{2}} + x^{-(\beta-\frac{1}{2})} - 2).
    \end{aligned}
    \end{equation}
    Note that $x^{\beta-1/2}+x^{-(\beta-1/2)}-2=(2\sinh(\frac{1}{2}(\beta-\frac{1}{2})\log x))^2$.
    Note that if $0<\beta-\frac{1}{2}<\delta$, then $0<\frac{1}{2}(\beta-\frac{1}{2})\log x<\frac{1}{2}$.
    Since $y<\sinh(y)<2y$ for $0<y<\frac{1}{2}$, \eqref{eqn:close_to_1/2} is
    \begin{align*}
        \ll (\log x)^2 \sum_{\substack{|\gamma|\leq T \\ 0 < \beta-\frac{1}{2} < \delta}}
        \Big( \beta - \frac{1}{2} \Big)^2 & \ll (\log x)^2 \int_0^{\delta} t N_f(\tfrac{1}{2}+t,T)dt\ll \frac{T(\log x)^2}{\log T}.
    \end{align*}
   The desired result follows.
\end{proof}

%%%%%
\begin{proof}[Proof of Theorem \ref{thm:densityfunction}]
    Let $h\in C^{n+2}(\T^n)$, and let $\bm{\alpha}$ satisfy \eqref{eq:condition_alpha}.
    Let $J\in[1, 100\log(eT)]$. We begin with the expansion
    \begin{align*}
        \sum_{|\gamma| \le T} h(\gamma \bm{\alpha})
        & = \sum_{|\gamma| \le T} \sum_{\bm{m} \in\Z^n} c_{\bm{m}} e^{2 \pi i \gamma (\bm{m} \cdot \bm{\alpha})} \\
        & = N_{\pi}(T) \int_{\mathbb{T}^n} h(\bm{t}) \textup{d}\bm{t}
        + \sum_{|\gamma| \le T}\Big(\sum_{1 \le \|\bm{m}\|_{2} \le J} + \sum_{\|\bm{m}\|_{2} > J}\Big)
        c_{\bm{m}} e^{2 \pi i \gamma (\bm{m} \cdot \bm{\alpha})}.
    \end{align*}
    By Lemma \ref{thm:coefficientbound}, we have
    {\small
    \begin{equation}
    \label{eqn:first_main_term}
        \sum_{|\gamma| \le T} h(\gamma \bm{\alpha})
        -N_{\pi}(T) \int_{\mathbb{T}^n} h(\bm{t}) \textup{d}\bm{t}
        = \sum_{|\gamma| \le T}\sum_{1 \le \|\bm{m}\|_{2} \le J}c_{\bm{m}}e^{2\pi i\gamma(\bm{m}\cdot\bm{\alpha})}+O\Big(\frac{N_{\pi}(T)}{J^{2}}\Big).
    \end{equation}}%
    Write $x_{\bm{m}}=e^{2\pi(\bm{m}\cdot\bm{\alpha})}$.  Since $x_{-\bm{m}}^{i\gamma}=x_{\bm{m}}^{-i\gamma}$
    and $c_{-\bm{m}}=-c_{\bm{m}}$, we find that \eqref{eqn:first_main_term} equals
    \[
         2 \Re \sum_{\substack{1 \le \|\bm{m}\|_{2} \le J \\ \bm{m} \cdot \bm{\alpha} > 0}} c_{\bm{m}} \sum_{0 < \gamma \le T}
         x_{\bm{m}}^{i \gamma} + O \Big( \frac{N_{\pi}(T)}{J^{2}} \Big).
    \]
    Choose $J$ so that $\|\bm{m}\|_2\leq J$ implies $\log x_{\bm{m}}<\frac{c}{3}\frac{\log T}{\log\log T}$ (with $c$ as in Theorem \ref{thm:ZDE}).
    By Lemma \ref{substract_exp} and the above display, \eqref{eqn:first_main_term} equals
    \[
        2 \Re \sum_{\substack{ 1 \le \|\bm{m}\|_{2} \le J \\ \bm{m} \cdot \bm{\alpha} > 0}} c_{\bm{m}} \sum_{|\gamma|\leq T}
        x_{\bm{m}}^{\rho - \frac{1}{2}} + O\Big(\frac{N_{\pi}(T)}{J^{2}} + \frac{T}{\log T} \sum_{1 \le \|\bm{m}\|_{2} \le J}
        \frac{1}{\|\bm{m}\|_2^{n+2} }\Big((\log x_{\bm{m}})^2+\frac{1}{\log T}\Big)\Big).
    \]

    Since $\log x_{\bm{m}}\leq 2\pi\|\bm{m}\|_2 \|\bm{\alpha}\|_2$,
    it follows from our preliminary bound for $J$ that \eqref{eqn:first_main_term} equals
    {\small
    \begin{equation}
    \label{eqn:first_main_two}
        2 \Re \sum_{\substack{ 1 \le \|\bm{m}\|_{2} \le J \\ \bm{m} \cdot \bm{\alpha} > 0}} c_{\bm{m}} \sum_{|\gamma|\leq T}
        x_{\bm{m}}^{\rho - \frac{1}{2}} \\
        + O\Big(\frac{N_{\pi}(T)}{J^{2}} + \frac{T\log\log T}{\log T}\Big).
    \end{equation}}

    We apply Lemma \ref{exp_sum_bound} to conclude that \eqref{eqn:first_main_two} equals
    \begin{equation}
    \label{eqn:second_main_two}
        -\frac{2T}{\pi} \Re \sum_{\substack{1 \le \|\bm{m}\|_{2} \le J \\ \bm{m} \cdot \bm{\alpha} > 0}}
        \frac{c_{\bm{m}} \Lambda_{\pi}(\langle x_{\bm{m}} \rangle)}{\sqrt{x_{\bm{m}}}}
        \cdot \frac{e^{iT \log\frac{x_{\bm{m}} }{ \langle x_{\bm{m}} \rangle}} - 1}
        {iT \log\frac{x_{\bm{m}} }{ \langle x_{\bm{m}} \rangle}} + \mathcal{E},
    \end{equation}
    where $\mathcal{E}$ satisfies (note that $\log x_{\bm{m}}=2\pi(\bm{m}\cdot\bm{\alpha})$)
    \[
        |\mathcal{E}|\ll \sum_{\substack{1 \le \|\bm{m}\|_{2} \le J \\ \bm{m} \cdot \bm{\alpha} > 0}}
        x_{\bm{m}}^{1 + \theta} ( \log(2 x_{\bm{m}}) + \log T )
        + \sum_{\substack{1 \le \|\bm{m}\|_{2} \le J \\ \bm{m} \cdot \bm{\alpha} > 0}} \frac{\log T}{\bm{m}\cdot\bm{\alpha}} + \Big( \frac{N_{\pi}(T)}{J^{2}}
        + \frac{T\log\log T}{\log T} \Big).
    \]
    Our choice of $J$ ensures that $\log(2x_{\bm{m}})\ll \log T$, so it follows from \eqref{eq:condition_alpha} that
    \[
        |\mathcal{E}| \ll \sum_{\substack{1 \le \|\bm{m}\|_{2} \le J \\ \bm{m} \cdot \bm{\alpha} > 0}}
        ( x_{\bm{m}}^{1 + \theta}
        + e^{\|\bm{m}\|_2})\log T + \frac{N_{\pi}(T)}{J^{2}}
        + \frac{T\log\log T}{\log T}.
    \]
    Since $x_{\bm{m}}^{1+\theta}=e^{2\pi(1+\theta)(\bm{m}\cdot\bm{\alpha})}\leq e^{4\pi\|\bm{m}\|_2 \|\bm{\alpha}\|_2}$, it follows that
    \begin{equation}
    \label{eqn:mathcal_E_bound}
        |\mathcal{E}| \ll e^{(n+4\pi\|\bm{\alpha}\|_2)J}\log T + \frac{N_{\pi}(T)}{J^{2}}
        + \frac{T\log\log T}{\log T}.
    \end{equation}
    We choose $J = (\log T)^{2/3}$.  Since $N_{\pi}(T)\ll T\log T$, \eqref{eqn:second_main_two} equals
    \begin{equation}
    \label{second_term2}
        -\frac{2T}{\pi} \Re \sum_{\substack{1 \le \|\bm{m}\|_{2} \le J \\ \bm{m} \cdot \bm{\alpha} > 0}}
        \frac{c_{\bm{m}} \Lambda_{\pi}(\langle x_{\bm{m}} \rangle)}{\sqrt{x_{\bm{m}}}}\cdot
        \frac{e^{iT \log\frac{x_{\bm{m}}}{\langle x_{\bm{m}} \rangle}} - 1}{iT \log\frac{x_{\bm{m}}}{\langle x_{\bm{m}} \rangle}}+ O \Big(\frac{T\log\log T}{\log T}\Big).
   \end{equation}

Observe that if $x_{\bm{m}}\neq\langle x_{\bm{m}}\rangle$, then $|e^{iT\log\frac{x_{\bm{m}}}{\langle x_{\bm{m}}\rangle}}-1|\leq |iT\log\frac{x_{\bm{m}}}{\langle x_{\bm{m}}\rangle}|$.  Also, since $0\leq\theta<\frac{1}{2}$, it follows that $|\Lambda_{\pi}(\langle x_{\bm{m}}\rangle)|\ll \sqrt{x_{\bm{m}}}$.  The proof of Lemma \ref{thm:coefficientbound} ensures that $|c_{\bm{m}}|\ll \|\bm{m}\|_{2}^{-n-2}$, so the sum over $\bm{m}$ converges absolutely.  By the decay of $|c_{\bm{m}}|$, our choice of $J$, and \eqref{eqn:mathcal_E_bound},  \eqref{second_term2} equals
    \begin{multline}
    \label{eqn:second_main_2}
       -\frac{2T}{\pi} \Re \Big(\sum_{\substack{ \bm{m} \cdot \bm{\alpha} > 0 \\ x_{\bm{m}}=\langle x_{\bm{m}}\rangle}}
       \frac{c_{\bm{m}} \Lambda_{\pi}(x_{\bm{m}})}{\sqrt{x_{\bm{m}}}}+\sum_{\substack{ \bm{m} \cdot \bm{\alpha} > 0 \\ x_{\bm{m}}\neq \langle x_{\bm{m}}\rangle}}
        \frac{c_{\bm{m}} \Lambda_{\pi}(\langle x_{\bm{m}}\rangle)}{\sqrt{x_{\bm{m}}}} \cdot \frac{e^{iT \log\frac{x_{\bm{m}} }{ \langle x_{\bm{m}} \rangle}} - 1}
        {iT \log\frac{x_{\bm{m}} }{ \langle x_{\bm{m}} \rangle}}\Big)\\
        +O \Big(\frac{T\log\log T}{\log T}\Big).
    \end{multline}
In particular, each sum over $\bm{m}$ converges absolutely.

To handle the sum over $\bm{m}$ such that $x_{\bm{m}}\neq\langle x_{\bm{m}}\rangle$, we note (by absolute convergence) that for all $\varepsilon>0$, there exists $M_{\varepsilon}=M_{\varepsilon}(\alpha,h)>0$ such that
\[
    \Big| \sum_{\substack{ \|\bm{m}\|_{2} >M_{\epsilon} \\ \bm{m} \cdot \bm{\alpha} > 0 \\ x_{\bm{m}}\neq \langle x_{\bm{m}}\rangle}}
    \frac{c_{\bm{m}} \Lambda_{\pi}(\langle x_{\bm{m}} \rangle)}{\sqrt{x_{\bm{m}}}}
    \cdot \frac{e^{iT \log\frac{x_{\bm{m}} }{ \langle x_{\bm{m}} \rangle}} - 1}
    {iT \log\frac{x_{\bm{m}} }{ \langle x_{\bm{m}} \rangle}}\Big|<\epsilon.
\]
Consequently, we have
\begin{multline*}
    \Big|-\frac{2T}{\pi} \Re \sum_{\substack{\|\bm{m}\|_{2} \geq 1 \\ \bm{m} \cdot \bm{\alpha} > 0 \\ x_{\bm{m}}\neq \langle x_{\bm{m}}\rangle}}
        \frac{c_{\bm{m}} \Lambda_{\pi}(\langle x_{\bm{m}} \rangle)}{\sqrt{x_{\bm{m}}}}
        \cdot \frac{e^{iT \log\frac{x_{\bm{m}} }{ \langle x_{\bm{m}} \rangle}} - 1}
        {iT \log\frac{x_{\bm{m}} }{ \langle x_{\bm{m}} \rangle}}\Big|\\
\leq \Big|\Re \sum_{\substack{ \|\bm{m}\|_{2} \leq M_{\epsilon} \\ \bm{m} \cdot \bm{\alpha} > 0 \\ x_{\bm{m}}\neq \langle x_{\bm{m}}\rangle}}
        \frac{c_{\bm{m}} \Lambda_{\pi}(\langle x_{\bm{m}} \rangle)}{\sqrt{x_{\bm{m}}}}
        \cdot \frac{e^{iT \log\frac{x_{\bm{m}} }{ \langle x_{\bm{m}} \rangle}} - 1}
        {\log\frac{x_{\bm{m}} }{ \langle x_{\bm{m}} \rangle}}\Big|+\epsilon T.
\end{multline*}
As we let $\epsilon\to 0$ sufficiently slowly, we conclude that \eqref{eqn:second_main_two} equals $o(T)$, as desired.

For the sum over $\bm{m}$ such that $x_{\bm{m}} = \langle x_{\bm{m}} \rangle$,
which means $x_{\bm{m}} \in \Z$, the terms which are not prime powers will vanish due to the presence of the von Mangoldt function.
For the other terms which are prime powers, we have $\bm{m} \cdot \bm{\alpha} = \frac{k \log p}{2 \pi}$ for some $k \in \N$
by the definition of $x_{\bm{m}}$. This will only happen when $\bm{m}$ is a multiple of $q_j \bm{b}_j$ for some $j \in \{1, \ldots, r\}$
due to our choice of the vector $\bm{\alpha}$ in \eqref{eqn:matrix_condition}, so
\begin{align*}
    -\frac{2}{\pi} \Re \sum_{\substack{\bm{m} \cdot \bm{\alpha} > 0 \\ x_{\bm{m}}=\langle x_{\bm{m}}\rangle}}
    \frac{c_{\bm{m}} \Lambda_{\pi}(x_{\bm{m}})}{\sqrt{x_{\bm{m}}}}= -\frac{2}{\pi} \Re \sum_{j = 1}^{r} \sum_{l = 1}^{\infty} \frac{\Lambda_\pi(p_j^{a_j l})}{p_j^{a_j l/2}} c_{l q_j \bm{b}_j}= \int_{\mathbb{T}^n} h(\bm{t}) g_{\pi, \bm{\alpha}}(\bm{t})\textup{d}\bm{t}.
\end{align*}
The last equation holds because of \eqref{eq:densityfunction} and \eqref{eq:cm}.
\end{proof}

%%%%
\section{Proof of Corollary \ref{cor:FSZ}}

Let $\mathbb{B}\subseteq\T^n$ be a product of $n$ subintervals of $\T$ for which $|\int_{\mathbb{B}}g_{f,\bm{\alpha}}(\bm{t})\textup{d}\bm{t}|$ attains its maximum.  For $\epsilon>0$, let $\varphi_{\epsilon}:\T^n\to\mathbb{R}$ satisfy the following conditions:
\begin{enumerate}[(i)]
    \item $\varphi_{\epsilon}$ is nonnegative and infinitely differentiable,
    \item $\varphi_{\epsilon}$ is supported on a compact subset of $U_{\epsilon}:=\{\bm{t}\in \T^n\colon \|\bm{t}\|_2<\epsilon\}$, and
    \item $\int_{\T^n}\varphi_{\epsilon}(\bm{t})\textup{d}\bm{t}=1$.
\end{enumerate}
Let $\mathbf{1}_{\mathbb{B}}$ be the indicator function of $\mathbb{B}$, and define
$h_{\epsilon}(\bm{t})=\int_{\T^n}\varphi_{\epsilon}(\bm{t})\mathbf{1}_{\mathbb{B}}(\bm{x}-\bm{t})\textup{d}\bm{t}$.
Then $h_{\epsilon}$ is infinitely differentiable, and thus Theorem \ref{thm:densityfunction} holds with $r$ arbitrarily large for $h=h_{\epsilon}$.
Consequently, for any fixed $r\geq n+2$, we have
\[
\int_{\T^n}h_{\epsilon}(\bm{y})\Big(\sum_{\substack{|\gamma|\leq T \\ \{\gamma\bm{\alpha}\}\in\mathbb{B}+\bm{y}}}1-\mathrm{vol}(\mathbb{B})N_{\pi}(T)\Big)d\bm{y}=T\int_{U_{\epsilon}}h_{\epsilon}(\bm{y})\int_{\mathbb{B}+\bm{y}}g_{f,\bm{\alpha}}(\bm{x})d\bm{x}d\bm{y} + o(T).
\]
It follows from our definition of $g_{f,\bm{\alpha}}(\bm{t})$ in \eqref{eq:densityfunction} that $g_{f,\bm{\alpha}}(\bm{t})\ll 1$, hence
\[
    \Big|\int_{\mathbb{B}+\bm{y}}g_{f,\bm{\alpha}}(\bm{x})d\bm{x}-\int_{\mathbb{B}}g_{f,\bm{\alpha}}(\bm{x})d\bm{x}\Big|\ll \epsilon
\]
for all $\bm{y}\in U_{\epsilon}$.  Thus, we have
\[
\int_{\T^n}h_{\epsilon}(\bm{y})\Big(\sum_{\substack{|\gamma|\leq T \\ \{\gamma\bm{\alpha}\}\in\mathbb{B}+\bm{y}}}1-\mathrm{vol}(\mathbb{B})N_{\pi}(T)\Big)d\bm{y}= T\int_{\mathbb{B}}g_{f,\bm{\alpha}}(\bm{t})\textup{d}\bm{t}+O(\epsilon T) + o(T).
\]
By the mean value theorem, there exists $\bm{y}\in U_{\epsilon}$ such that
\[
    \Big|\sum_{\substack{|\gamma|\leq T \\ \{\gamma\bm{\alpha}\}\in\mathbb{B}+\bm{y}}}1-\mathrm{vol}(\mathbb{B})N_{\pi}(T)\Big|
    \geq T\Big|\int_{\mathbb{B}}g_{f,\bm{\alpha}}(\bm{t})\textup{d}\bm{t}\Big| + O(\epsilon T) + o(T).
\]
The proof follows once we let $\epsilon \to 0$ sufficiently slowly as a function of $T$.

\section{Proofs of Corollaries 1.5-1.8}
\subsection{An estimate for the density function}

We begin with a useful estimate for the density function $g_{f,\bm{\alpha}}$ associated to a holomorphic cuspidal newform $f\in S_k^{\mathrm{new}}(\Gamma_0(q))$ as in Section \ref{sec:holomorphic_newforms}.
\begin{lemma}
\label{lem:truncation}
    Let $f \in S_k (\Gamma_0(q))$ be a newform and let $\alpha = \frac{a\log p}{2 \pi q}$.    Then we have
    \[
        \Big|g_{f, \alpha}(t) + \frac{2}{\pi}
        \frac{\Lambda_{\pi} (p^{a})}{p^{a/2}} \cos (2 \pi q t) \Big|
        \leq  \frac{4 \log p}{\pi p^a (1 - p^{-a/2})}.
    \]
\end{lemma}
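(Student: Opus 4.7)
The plan is straightforward: isolate the $l=1$ term of the Dirichlet series defining $g_{f,\alpha}$ and apply Deligne's bound to control the tail.

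First, I would specialize the definition \eqref{eq:densityfunction} to the present setting. Here $n=1$, and the matrix condition \eqref{eqn:matrix_condition} is realized with $r=1$, $p_1=p$, $a_1=a$, $q_1=q$, and $\bm{b}_1=(1)$ (the hypothesis $\gcd(a,q)=1$, implicit in ``$a/q$ fully reduced'', is built into this choice). Therefore
\[
g_{f,\alpha}(t) \;=\; -\frac{2}{\pi}\,\Re\sum_{l=1}^{\infty}\frac{\Lambda_\pi(p^{al})}{p^{al/2}}\,e^{-2\pi i q l t}.
\]

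Next I would separate the $l=1$ term from the remainder. Because $f$ has trivial nebentypus, the Satake parameters $\alpha_{1,\pi}(p)$ and $\alpha_{2,\pi}(p)$ are complex conjugates of one another (with $|\alpha_{j,\pi}(p)|\leq 1$ by Deligne, as recorded in Section~\ref{sec:holomorphic_newforms}), so $\Lambda_\pi(p^{al}) = (\alpha_{1,\pi}(p)^{al}+\alpha_{2,\pi}(p)^{al})\log p$ is real for every $l\geq 1$. Consequently, the $l=1$ contribution equals exactly $-\frac{2}{\pi}\frac{\Lambda_\pi(p^a)}{p^{a/2}}\cos(2\pi q t)$, which cancels against the second term on the left-hand side of the claimed inequality.

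For the tail $l\geq 2$, the Deligne bound yields $|\Lambda_\pi(p^{al})|\leq 2\log p$, so the remaining sum is dominated by a convergent geometric series:
\[
\Big|\sum_{l=2}^{\infty}\frac{\Lambda_\pi(p^{al})}{p^{al/2}}e^{-2\pi i q l t}\Big|\;\leq\;\sum_{l=2}^{\infty}\frac{2\log p}{p^{al/2}}\;=\;\frac{2\log p}{p^{a}(1-p^{-a/2})}.
\]
Multiplying by $\frac{2}{\pi}$ gives precisely the asserted upper bound $\frac{4\log p}{\pi p^a(1-p^{-a/2})}$.

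There is no real obstacle here: the whole proof is a short computation using the explicit form of $g_{f,\alpha}$ together with Ramanujan for holomorphic newforms. The only point worth flagging is the reality of $\Lambda_\pi(p^{al})$, which is what allows the $l=1$ term of the Dirichlet series to be identified cleanly with the cosine appearing in the statement.
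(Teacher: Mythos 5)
Your proof is correct and follows the same route as the paper: specialize \eqref{eq:densityfunction} to $r=1$, cancel the $\ell=1$ term against the cosine, and bound the $\ell\ge 2$ tail by a geometric series using Deligne's bound $|\Lambda_{\pi}(p^{a\ell})|\le 2\log p$. Your explicit remark that $\Lambda_{\pi}(p^{a\ell})$ is real (so the real part of the $\ell=1$ term is exactly the cosine term) is a detail the paper leaves implicit, and your constant $\frac{4\log p}{\pi p^a(1-p^{-a/2})}$ is the correct one.
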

\begin{proof}
    In this case, Deligne's bound implies that
    \[
    \Big|g_{f,\alpha}(t)+\frac{2}{\pi}\frac{\Lambda_{\pi}(p^a)}{p^{a/2}}\cos(2\pi q t)\Big| = \Big|\frac{2}{\pi} \sum_{\ell = 2}^{\infty} \frac{\Lambda_{\pi}(p^{a \ell})}{p^{\frac{a \ell }{2}}} \cos ( 2 \pi q \ell t)\Big| \le  \sum_{\ell=2}^{\infty} \frac{ 2 \log p   }{p^{a \ell /2}}
        = \frac{2  \log p}{p^a (1 - p^{-a /2}) }.
    \]
\end{proof}

\subsection{Proof of Corollary \ref{cor:equalconductors}}
First we prove a simple criterion for $f_1$ winning the $(\frac{\log p}{2\pi}, h)$-race. From Lemma \ref{lem:truncation}, we obtain
\[
    \int_\mathbb{T} h(t) ( g_{f_1, \alpha}(t) - g_{f_2, \alpha}(t) ) dt > \frac{(\lambda_{f_2}(p) - \lambda_{f_1}(p)) 2 \log p}{\pi \sqrt{p}} \int_\mathbb{T} h(t) \cos(2 \pi t)dt  - \frac{\int_\mathbb{T} h(t) dt \cdot 8 \log p}{\pi p (1 - p^{-1/2}) }.
\]
Consequently, the inequality
\begin{equation}
\label{eq:estimate1}
(\lambda_{f_2}(p) - \lambda_{f_1}(p))\int_\mathbb{T} h(t) \cos(2 \pi t)dt > \frac{4 \int_{\mathbb{T}} h(t) dt}{\sqrt{p}(1-p^{-1/2})}.
\end{equation}
implies that
\[
    \int_\mathbb{T} h(t) ( g_{f_1, \alpha}(t) - g_{f_2, \alpha}(t) ) dt > 0,
\]
which by Theorem \ref{thm:densityfunction} tells us that $f_1$ wins the $(\frac{\log p}{2 \pi}, h)$-race.

Throughout the proof, we let $ k_h := \int_\mathbb{T} h(t) \cos(2 \pi t)dt$.  The number of $p$ for which $f_1$ wins the race is equal to $T_1 - T_2 + T_3$, where
\begin{align*}
    T_1 &:=  \# \{ p \le X:  k_h(\lambda_{f_2} (p) - \lambda_{f_1}(p)) > 0 \}, \\
    T_2 &:= \# \{ p \le X:  f_1 \mbox{ loses the $(\alpha,h)$-race, and } k_h( \lambda_{f_2} (p) - \lambda_{f_1}(p)) > 0 \},\\
    T_3 &:= \# \{ p \le X: f_1 \mbox{ wins the $(\alpha,h)$-race, and } k_h( \lambda_{f_2} (p) - \lambda_{f_1}(p)) \le 0 \}.
\end{align*}
From the symmetry of $\mu_{ST,2}$, we see that $\nu(\{ (x,y) \in [-2,2]^2: x-y > 0\}) = \frac{1}{2}$. By Corollary \ref{cor:STlimit} with $\mathcal{I} = (0,4)$, we have
\[
    T_1 = \frac{1}{2} \pi(X) + O\Big( \pi(X) \frac{\sqrt{\log\log\log X}}{(\log\log X)^{1/4}} \Big).
\]

We have to show that $T_2$ and $T_3$ are both $O( \pi(X) \frac{\sqrt{\log\log\log X}}{(\log\log X)^{1/4}})$. Note that
\[
T_2 = \# \{ \sqrt{X} \le p \le X: f_1 \mbox{ loses the $(\alpha, h)$-race, and } k_h(\lambda_{f_2}(p) - \lambda_{f_1}(p)) > 0 \} + O(\pi(\sqrt{X})).
\]
If $p \ge \sqrt{X}$ and $f_1$ loses the $(\alpha, h)$-race and $k_h(\lambda_{f_2}(p) - \lambda_{f_1}(p)) > 0$,
then by \eqref{eq:estimate1}, we have
\[
    k_h(\lambda_{f_2}(p) - \lambda_{f_1}(p)) \in \Big(0, \frac{4 \int_{\mathbb{T}} h(t) dt }{\sqrt{p}(1-p^{-1/2}) } \Big]
    \subseteq \Big( 0, \frac{4 \int_{\mathbb{T}} h(t) dt}{X^{\frac{1}{4}}(1-X^{-1/2})} \Big].
\]
Denoting by $J_X$ the rightmost interval in the preceding display, we have
\begin{align*}
    \# & \{ p \le X: f_1 \mbox{ loses the $(\alpha, h)$-race, and } k_h(\lambda_{f_2}(p) - \lambda_{f_1}(p)) > 0 \} \\
    & \le \# \{ p \le X: \lambda_{f_2}(p) - \lambda_{f_1}(p) \in J_X \} + O(\pi(\sqrt{X})).
\end{align*}
By Corollary \ref{cor:STlimit}, this is at most
\begin{multline*}
   \# \{ p \le X: f_1 \mbox{ loses the $(\alpha, h)$-race, and }k_h( \lambda_{f_2} (p) - \lambda_{f_1}(p)) > 0 \} \\
   \leq \nu \Big(  |k_h|^{-1} J_X  \Big) \pi(X) +O\Big(\pi(X) \frac{\sqrt{\log\log\log X}}{(\log\log X)^{1/4}}\Big).
\end{multline*}
Since $\nu(|k_h|^{-1} J_X) = O( | J_X | ) = O(X^{-1/4})$, it follows that
\[
    \# \{p \le X: f_1 \mbox{ loses the $(\alpha, h)$-race, and } k_h(\lambda_{f_2}(p) - \lambda_{f_1}(p)) > 0 \} = O\Big(\pi(X) \frac{\sqrt{\log\log\log X}}{(\log\log X)^{1/4}}\Big).
\]

If the conditions for $T_3$ are true, then
\[
    k_h( \lambda_{f_2}(p) - \lambda_{f_1}(p) )\in \Big[- \frac{4 \int_{\mathbb{T}} h(t) dt }{\sqrt{p}(1-p^{-1/2}) }, 0 \Big].
\]
Therefore, $T_3\ll \pi(X) \frac{\sqrt{\log\log\log X}}{(\log\log X)^{1/4}}$ by the argument used to bound $T_2$.

The result follows from the estimate shown for $T_1$ and the bounds for $T_2$ and $T_3$.

\subsection{Proof of Corollary \ref{thm:STracedist}}
\begin{proof}
    From Theorem \ref{thm:densityfunction}, Lemma \ref{lem:truncation}, and (\ref{eq:zerocount}) we have
    \begin{equation}\label{eq:firstexp}
        H\Big(f_1, f_2, h, \frac{\log p}{2 \pi}\Big)
        = \frac{2 k_h}{\pi} \cdot \frac{\log p \cdot (\lambda_{f_2}(p) - \lambda_{f_1}(p))}{p^{1 /2}}
        + O_{h} \Big( \frac{\log p}{p} \Big).
    \end{equation}
    Consider the statements
    \begin{equation}\label{eq:coefficientcondition}
       \frac{2 k_h}{\pi} (\lambda_{f_2}(p) - \lambda_{f_1}(p)) \in \mathcal{I}
    \end{equation}
    and
    \begin{equation} \label{eq:sumcondition}
       \frac{\sqrt{X}}{\log{X}} H(f_1, f_2, h, \frac{\log p}{2 \pi}) \in \mathcal{I}.
    \end{equation}
    Defining
    \begin{align*}
        T_1&:=\# \{ p \in [(1 - \epsilon_X)X, X]: \mbox{ \eqref{eq:coefficientcondition} holds} \},\\
        T_2&:=\# \{ p \in [(1 - \epsilon_X)X, X]: \mbox{ \eqref{eq:coefficientcondition} holds and \eqref{eq:sumcondition} fails} \},\\
        T_3&:=\# \{ p \in [(1 - \epsilon_X)X, X]: \mbox{ \eqref{eq:coefficientcondition} fails and \eqref{eq:sumcondition} holds} \},
    \end{align*}
we have
    \begin{align*}
        \# \Big\{ p \in [(1 - \epsilon_X)X, X]:  \frac{\sqrt{X}}{\log{X}} H\Big(f_1,  f_2, h, \frac{\log{p}}{2 \pi}\Big) \in \mathcal{I} \Big\} = T_1 - T_2 + T_3.
    \end{align*}
    By Corollary \ref{cor:STlimit} we have:
    \begin{align*}
        T_1&= \# \{ p \le X: \mbox{ \eqref{eq:coefficientcondition} holds} \} - \# \{ p \le (1-\epsilon_X)X: \mbox{ \eqref{eq:coefficientcondition} holds} \} \\
        &= \nu \Big(\frac{\pi}{2k_h} \mathcal{I}\Big) \pi(X) + O( \pi(X) \epsilon_X^2 ) \\
        &- \nu \Big(\frac{\pi}{2k_h} \mathcal{I} \Big) \pi((1-\epsilon_X)X) + O \Big( \pi(X - \epsilon_X X) \epsilon_X^2 \Big) \\
        &= \nu \Big(\frac{\pi}{2k_h} \mathcal{I} \Big) \cdot \Big(\pi(X) - \pi((1 - \epsilon_X) X) \Big) + O\Big( \pi(X) \epsilon_X^2 \Big).
    \end{align*}

    We proceed to show that $T_2$ and $T_3$ are $O\Big( \epsilon_X^2\pi(X)  \Big)$ as $X \to \infty$. We first examine $T_2$.
    Set $\mathcal{I} = (\delta_1, \delta_2)$. If the condition in \eqref{eq:sumcondition} is false, then
    \[
        \frac{\sqrt{X}}{\log{X}} H\Big(f_1,f_2, h, \frac{\log p}{2 \pi}\Big) \notin [\delta_1,\delta_2].
    \]
    Applying \eqref{eq:firstexp}, we deduce that
    \[
        \frac{\sqrt{X}}{\log{X}}  \frac{2k_h}{\pi} \cdot \frac{\log p \cdot (\lambda_{f_2}(p) - \lambda_{f_1}(p))}{p^{1 /2}} \notin \Big[\delta_1 + \frac{C\log{p}}{p},\delta_2 - \frac{C \log{p}}{p}\Big],
    \]
    where $C$ is an implied constant in \eqref{eq:firstexp}. Then \eqref{eq:coefficientcondition} gives us
    \begin{equation}\label{eq:restriction}
        \frac{2 k_h}{\pi} (\lambda_{f_2}(p) - \lambda_{f_1}(p))
        \in \Big( \delta_1, \delta_1 \Big(\frac{p^{1/2} \log X}{X^{1/2} \log{p}}\Big) + \frac{C}{\sqrt{p}} \Big) \\
        \cup \Big( \delta_2 \Big(\frac{p^{1/2} \log X}{X^{1/2} \log{p}}\Big)  - \frac{C}{\sqrt{p}}, \delta_2 \Big).
    \end{equation}
     Since $p \in [(1-\epsilon_X)X, X]$, it follows that $(\lambda_{f_2}(p) - \lambda_{f_1} (p))  \in I_X$, where $I_X$ is
    \begin{align*}  
        \frac{\pi}{2k_h} \Big(\delta_1, \frac{\delta_1 \log X}{\log{X(1-\epsilon_X)}} + \frac{C}{\sqrt{X(1-\epsilon_X)}}\Big) \cup \frac{\pi}{2k_h} \Big( \delta_2 \sqrt{1-\epsilon_X} - \frac{C}{\sqrt{X(1-\epsilon_X)}}, \delta_2 \Big).
    \end{align*}
    By Corollary \ref{cor:STlimit}, we have
    \begin{align*}
        & \# \{ (1 - \epsilon_X)X \le p \le X: (\lambda_{f_1}(p) - \lambda_{f_2}(p)) \in I_X \} \\
        &=  \nu(I_X) (\pi(X) - \pi((1 - \epsilon_X)X)) + O\Big(\pi(X) \epsilon_X^2 \Big).
    \end{align*}
    
    From the prime number theorem, we obtain
    $$
    \pi(X) - \pi((1 - \epsilon_X)X) \sim \epsilon_X \pi(X) . 
    $$
    
   Combining this with the fact that $\nu(I_X) = O(\epsilon_X)$, we conclude the following:
    $$
     \# \{ (1 - \epsilon_X)X \le p \le X: (\lambda_{f_1}(p) - \lambda_{f_2}(p)) \in I_X \}  = O( \epsilon_X^2 \pi(X) ).
     $$
     
     Therefore, $T_2 = O( \epsilon_X^2 \pi(X))$. A very similar argument can be used to bound $T_3$. More specifically, if \eqref{eq:sumcondition} holds, then we have
    \[
        \frac{\sqrt{X}}{\log{X}}  \frac{2k_h}{\pi} \cdot \frac{\log p \cdot (\lambda_{f_2}(p) - \lambda_{f_1}(p))}{p^{1 /2}} \in ( \delta_1 - \frac{C\log{p}}{p},\delta_2 + \frac{C \log{p}}{p}),
    \]
    If (\ref{eq:coefficientcondition}) fails and \eqref{eq:sumcondition} holds, then, much like \eqref{eq:restriction}, we obtain
    \[
        \frac{2 k_h}{\pi} (\lambda_{f_2}(p) - \lambda_{f_1}(p))
        \in \Big( \delta_1 \Big(\frac{p^{1/2} \log X}{X^{1/2} \log{p}}\Big) - \frac{C}{\sqrt{p}}, \delta_1 \Big) \cup \Big(\delta_2, \delta_2 \Big(\frac{p^{1/2} \log X}{X^{1/2} \log{p}}\Big) + \frac{C}{\sqrt{p}} \Big).
    \]
    By Corollary \ref{cor:STlimit}, the number of such $p \in ((1-\epsilon_X)X, X)$ is $O\Big( \epsilon_X^2\pi(X)  \Big)$.
\end{proof}

%%%
\subsection{Proof of Corollary \ref{thm:distinctconductors}}
By Theorem \ref{thm:densityfunction}, if $f_2$ wins the $(\alpha, h)$-race, where $\alpha = \frac{\log p}{2\pi}$, then we must have
\[
    0 < \frac{\log(q_1 / q_2)}{ \pi} \int_\mathbb{T} h(t) dt < \int_\mathbb{T} h(t) ( g_{f_2, \alpha}(t) - g_{f_1, \alpha}(t)) dt.
\]
By Lemma \ref{lem:truncation}, we have
\[
    \Big| \int_\mathbb{T} (g_{f_2, \alpha}(t) - g_{f_1,\alpha}(t)) h(t) dt \Big| \leq \int_\mathbb{T} h(t) dt \cdot \frac{2 \log{p}}{\pi}
     \Big( p^{ - \frac{1}{2}} + \frac{1}{(p^{ \frac{1}{2} } - 1)
    p^{\frac{1}{2}}} \Big).
\]
It follows that $\log(q_1/q_2) \le   2 p^{-1/2}( 1 + (p^{ \frac{1}{2} } - 1 )^{-1})\log{p}$.  The left hand side is independent of $\alpha$ and positive, while the right hand side tends to zero as $p$ grows.  Thus, this inequality holds for only finitely many primes $p$.

\subsection{Proof of Corollary \ref{thm:exceptional}}
Fix $t_0 \in [0,1)$, and $k_1, k_2 \in \mathbb{Z}$.  By Theorem \ref{thm:densityfunction} and the same reasoning as in \cite[Theorem 1.2]{LZ}, that $f_1$ wins the local $(\alpha, t_0)$-race against $f_2$ if
\[
    g_{f_1, \alpha}(t_0) + \frac{\log{q_1}}{2 \pi} > g_{f_2, \alpha} (t_0) + \frac{\log q_2}{2\pi},
\]
or, equivalently, if $\frac{1}{2\pi} \log(q_1/q_2) > g_{f_2, \alpha}(t_0) - g_{f_1, \alpha}(t_0)$.

We first assume $t_0 \neq \frac{1}{4}, \frac{3 }{4}$.  For $\alpha = \frac{\log p}{2 \pi}$ and $q_2 \in (q_1, q_1 + \sqrt{q_1})$, by Lemma~\ref{lem:truncation}, the following is sufficient to guarantee that $f_1$ wins the $(\alpha,t_0)$ race:
\begin{equation}\label{eq:STgoal}
    \frac{\sqrt{p}}{2 \log p} \log(1 + q_1^{-1/2}) < \cos(2 \pi t_0) (\lambda_{f_2}(p) - \lambda_{f_1}(p)) - \frac{4}{p^{1/2} (1 - p^{-1/2})}.
\end{equation}
This inequality is automatically true if both
\begin{equation}\label{eq:constraint1}
    (\lambda_{f_1}(p) - \lambda_{f_2}(p)) \cos (2 \pi t_0) \ge \frac{|\cos(2 \pi t_0)|}{2}
\end{equation}
and
\begin{equation}\label{eq:constraint2}
     \frac{|\cos(2 \pi t_0)|}{2} > \frac{p^{1/2}}{2 \log p} \log(1 + q_1^{-1/2})+ \frac{4}{p^{1/2} (1 - p^{-1/2})}
\end{equation}
are satisfied.

Now set $X =  q_1^{1/4}  $ and $Y = q_1^{1/6} $. For $p \in [Y,X)$, if $q_1$ is sufficiently large, then (\ref{eq:constraint2}) is satisfied.
Choose $I_1, I_2 \subseteq [-2, 2]$ such that $(\lambda_{f_1}(p) ,\lambda_{f_2}(p)) \in I_1 \times I_2$ implies \eqref{eq:constraint1}. Suppose $f_1 \in S_{k_1}^{new}(\Gamma_0(q_1)), f_2 \in S_{k_2}^{new} (\Gamma_0(q_2))$ are non-CM newforms, where $q_2 \in [q_1, q_1 + \sqrt{q_1}]$, and assume, as in the statement of the Corollary, that $f_2 \neq f_1 \otimes \chi$ for any primitive Dirichlet character $\chi$. Let
$$
\pi_{f_1, f_2, I_1, I_2}(X) := \# \{ p \le X: (\lambda_{f_1}(p) , \lambda_{f_2} (p)) \in I_1 \times I_2 \}.
$$
Then by Theorem \ref{thm:effectiveST} we have
\begin{multline*}
    \pi_{f_1, f_2, I_1, I_2}(X) - \pi_{f_1, f_2, I_1, I_2}(Y) \ge \mu_{\mathrm{ST}}(I_1) \mu_{\mathrm{ST}}(I_2) ( \pi(X) - \pi(Y)) \\
    - c \pi(X) \frac{\log \log \log (k_1 k_2 q_1^{\frac{1}{4} +\frac{1}{2} + 1})}{(\log \log q_1^{1/4})^{1/2}}  - c \pi(Y) \frac{\log \log \log (k_1 k_2 q_1^{\frac{1}{6} +\frac{1}{2} + 1})}{(\log \log q_1^{1/6})^{1/2}}.
\end{multline*}
So if $q_1$ is sufficiently large, then $\pi_{f_1, f_2, I_1, I_2}(X) > \pi_{f_1, f_2, I_1, I_2}(Y)$. So there exists $p$ between $Y$ and $X$ such that (\ref{eq:STgoal}) is satisfied and
therefore $f_1$ wins the local $(t_0, \frac{\log p}{2\pi})$ race.

Finally, if $t_0 = \frac{1}{4}, \frac{3}{4}$, then instead of \eqref{eq:STgoal}, we wish to find $p$ such that
\[   
    \frac{p}{2 \log p} \log(1 + q_1^{-1/2}) < (\lambda_{f_1}(p)^2 - \lambda_{f_2}(p)^2) - \frac{4}{p^{3/2} (1 - p^{-1/2})}.
\]
We obtain this the same way as the first case.

\section{Example}
We conclude with a numerical example to illustrate \eqref{eqn:conj_GL2}.
%Note that we can not afford to choose indicator functions as $h$ in Theorem \ref{thm:densityfunction} due to the smoothness requirement.
%However, a suitably normalized function $h \in C^{\infty}(\T^n)$ with compact support will suffice for our purpose, such as the bump function,
%and the detail of which is omitted here for simplicity.
For our example, we consider the $L$-function $L(s,\Delta)$ associated to the discriminant modular form
\[
\Delta(z) = e^{2\pi i z}\prod_{n=1}^{\infty}(1-e^{2\pi i n z})^{24} = \sum_{n=1}^{\infty}\tau(n)e^{2\pi inz}\in S_{12}^{\mathrm{new}}(\Gamma_0(1)),
\]
where $\tau(n)$ denotes the Ramanujan tau function.  We use Rubinstein's \texttt{lcalc} package \cite{MR} to calculate the $2\cdot 10^5$ nontrivial zeros $L(s, \Delta)$ up to height $T = 74920.77$.

Let $M$ and $\bm{\alpha}$ satisfy the following relation for \eqref{eqn:matrix_condition},
\[
    M \bm{\alpha}^{\intercal}
    =
    \begin{pmatrix}
        1 & 1 \\
        1 & 2
    \end{pmatrix}
    \begin{pmatrix}
        \alpha_1 \\
        \alpha_2
    \end{pmatrix}
    =
    \frac{1}{2\pi}\begin{pmatrix}
    \log 2 \\
    \log 3
    \end{pmatrix},
\]
so that \eqref{eq:densityfunction} will define our density function $g_{\Delta, \bm{\alpha}}(x,y)$.
We graph $g_{\Delta, \bm{\alpha}}(x,y)$ in Figure \ref{fig:theory} below.
Next, we partition the unit square $[0, 1) \times [0, 1)$ as
\[
    [0,1)\times[0,1) = \bigcup_{a=0}^{29}\bigcup_{b=0}^{29}S_{a,b},
    \qquad S_{a,b}:=\Big[\frac{a}{30},\frac{a+1}{30}\Big)\times\Big[\frac{b}{30},\frac{b+1}{30}\Big).
\]
Given $(x,y)\in[0,1)\times[0,1)$, there exists a unique pair of integers $a$ and $b$ with $0\leq a,b\leq 29$ such that $(x,y)\in S_{a,b}$.
Denoting this unique square as $S(x,y)$, we define
\[
    \tilde{g}_{\Delta, \bm{\alpha}}(x, y) := \#\{\rho = \beta + i \gamma: L(\rho, \Delta) = 0 \textup{ and }
    (\{\alpha_1 \gamma\}, \{\alpha_2 \gamma\}) \in S(x,y) \}.
\]
This gives us a discretized approximation to $g_{\Delta,\bm{\alpha}}(x,y)$, which we plot in Figure \ref{fig:data} below.

\begin{figure}[!ht]
    \centering
    \begin{subfigure}[b]{0.45\linewidth}
        \includegraphics[width=\linewidth]{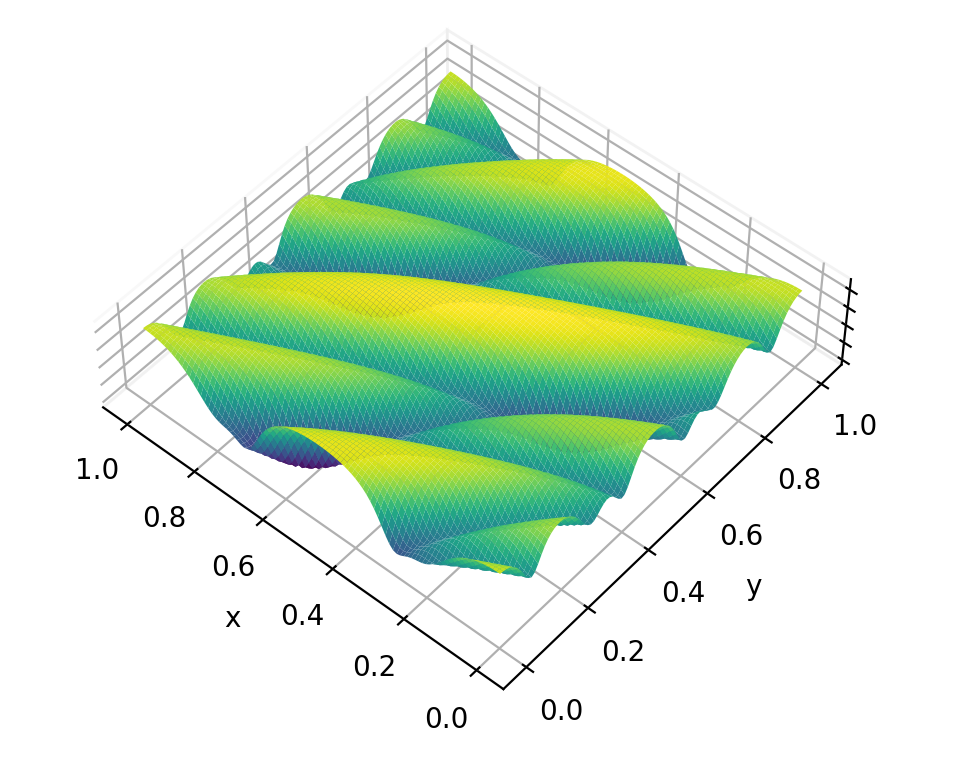}
        \caption{$g_{\Delta, \bm{\alpha}}(x,y)$.}
        \label{fig:theory}
    \end{subfigure}
    \begin{subfigure}[b]{0.45\linewidth}
        \includegraphics[width=\linewidth]{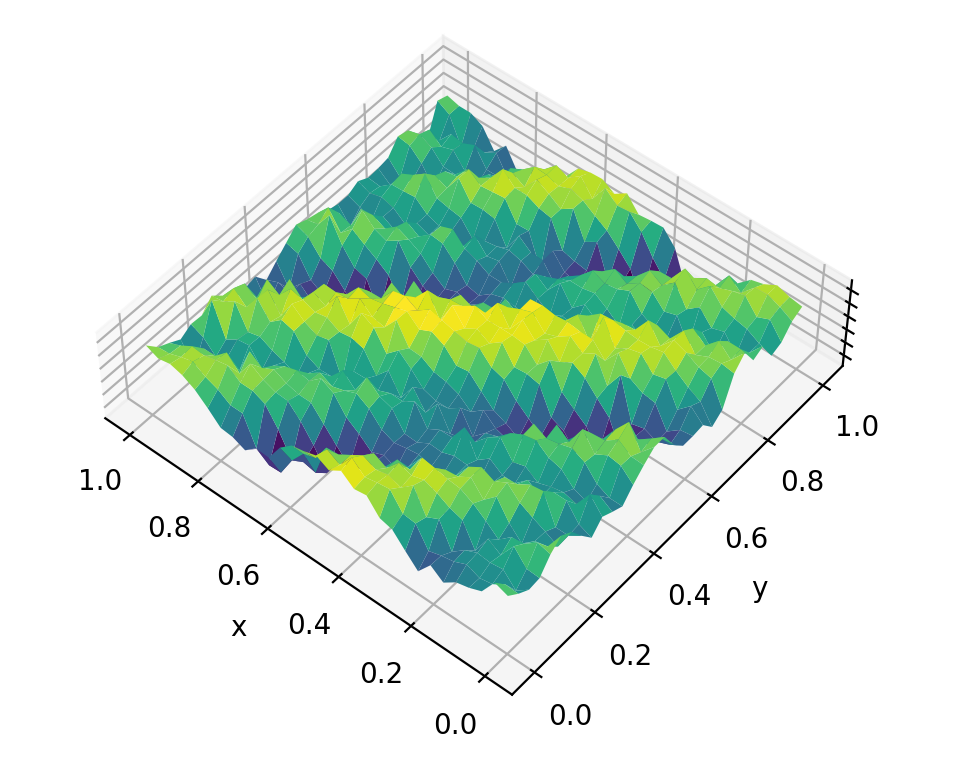}
        \caption{$\tilde{g}_{\Delta, \bm{\alpha}}(x,y)$.}
        \label{fig:data}
    \end{subfigure}
    \caption{Example.}
\end{figure}

\bibliographystyle{amsalpha}
\bibliography{ZeroRaces}

\end{document}